\documentclass{siamart220329}
\usepackage{graphicx}
\usepackage{diagbox}
\usepackage{standalone}
\usepackage{amsmath}								
\usepackage{amssymb}
\usepackage{amsfonts}
\usepackage{mathtools}
\usepackage{bm}
\usepackage{bbm}
\usepackage{indentfirst}
\usepackage{enumitem}
\usepackage{algpseudocode}
\usepackage{algorithm}
%\usepackage[ruled, linesnumbered]{algorithm2e}  % for algorithm
%\SetKwInOut{Pre}{Preconditions}
%\SetKwInOut{Post}{Postconditions}
\usepackage{multirow}
\usepackage{makecell}
\usepackage{subfigure}
\usepackage{hyperref}
\usepackage{xcolor}

\usepackage{tikz}
\usetikzlibrary{calc}
\usetikzlibrary{arrows.meta}
\tikzset{
  dot/.style={
    circle, fill=black, inner sep=1pt, outer sep=0pt
  },
  dot label/.style={
    circle, inner sep=0pt, outer sep=1pt
  }
  arrow1/.style = {
    draw = black, thick, -{Latex[length = 4mm, width = 1.5mm]},
  }
}

% some fixes
\newcommand{\ibold}{\mathbf{i}}
\newcommand{\jbold}{\mathbf{j}}
\newcommand{\ebold}{\bm{\mathrm{e}}}
\newcommand{\Dim}{{\scriptsize \textsf{D}}}

% Blackboard
\newcommand{\avg}[1]{\left\langle #1 \right\rangle}
\newcommand{\bbR}{\mathbb{R}}
\newcommand{\bbY}{\mathbb{Y}}
\newcommand{\bbZ}{\mathbb{Z}}
% Math bold font

\newcommand{\bmi}{\mathbf{i}}

\newcommand{\bmihalf}{\bmi + \frac 1 2 \bm{\mathrm{e}}^d}
\newcommand{\bmj}{\mathbf{j}}

\newcommand{\bmn}{\mathbf{n}}
\newcommand{\bmp}{\mathbf{p}}

\newcommand{\bmx}{\mathbf{x}}

\newcommand{\bmalpha}{\bm{\alpha}}

% caligraphic
\newcommand{\calB}{\mathcal{B}}
\newcommand{\calC}{\mathcal{C}}

\newcommand{\calF}{\mathcal{F}}

\newcommand{\calL}{\mathcal{L}}

\newcommand{\calN}{\mathcal{N}}
\newcommand{\calP}{\mathcal{P}}
\newcommand{\calS}{\mathcal{S}}
\newcommand{\calT}{\mathcal{T}}
\newcommand{\calX}{\mathcal{X}}
\newcommand{\calY}{\mathcal{Y}}
% Yin sets
\newcommand{\cuppp}{\cup^{\perp \perp}}

% Operators
\DeclareMathOperator{\diag}{diag}

\DeclareMathOperator{\bigO}{O}

\DeclareMathOperator*{\argmax}{argmax}
\DeclareMathOperator{\intr}{int}
%\newcommand{\proj}{\mathbf{P}}
% Roman
\newcommand{\basis}{\bm{\mathrm{e}}}
\newcommand{\Span}{\mathrm{span}}
\newcommand{\rmd}{\mathrm{d}}
\newcommand{\rmF}{\mathrm{F}}
%\newcommand{\bigO}{\operatorname{O}}
% Others
\newcommand{\bbone}{\mathbbm{1}}
% Functors
%\newcommand{\tr}[1]{#1^\intercal}
\newcommand{\brk}[1]{\left\langle #1 \right\rangle}
%\newcommand{\bbrk}[1]{\left\langle #1 \right\rangle}
%\newcommand{\measure}{\mathrm{m}}

% Environments of theorems, etc.
%\newtheorem{theorem}{Theorem}
%\newtheorem{prop}[theorem]{Proposition}
%\newtheorem{defn}[theorem]{Definition}
%\newtheorem{remark}[theorem]{Remark}
\newsiamremark{remark}{Remark}

% Options for the algorithmic

% Other options
%\numberwithin{equation}{section}
%\numberwithin{algorithm}{section}

%%%%%%%%%%%%%%%%%%%%%%%%%%%%%%%%%%%%%%%%%%%%%
% Below is from the package fdsymbol.
% The purpose here is to import the symbol \lAngle and \rAngle,
% without overwriting the default math font.

\makeatletter
\newcommand{\fdsy@scale}{1.0}
\newcommand\fdsy@mweight@normal{Book}%
\newcommand\fdsy@mweight@small{Regular}%
\newcommand\fdsy@bweight@normal{Medium}%
\newcommand\fdsy@bweight@small{Bold}%
\DeclareFontFamily{U}{FdSymbolF}{}
\DeclareFontShape{U}{FdSymbolF}{m}{n}{
    <-7.1> s * [\fdsy@scale] FdSymbolF-\fdsy@mweight@small
    <7.1-> s * [\fdsy@scale] FdSymbolF-\fdsy@mweight@normal
}{}
\DeclareFontShape{U}{FdSymbolF}{b}{n}{
    <-7.1> s * [\fdsy@scale] FdSymbolF-\fdsy@bweight@small
    <7.1-> s * [\fdsy@scale] FdSymbolF-\fdsy@bweight@normal
}{}
\makeatother

\DeclareSymbolFont{delimiters}{U}{FdSymbolF}{m}{n}
\SetSymbolFont{delimiters}{bold}{U}{FdSymbolF}{b}{n}
\DeclareMathDelimiter{\lAngle}{\mathopen}{delimiters}{"92}{delimiters}{"92}
\DeclareMathDelimiter{\rAngle}{\mathclose}{delimiters}{"98}{delimiters}{"92}
\newcommand{\bbrk}[1]{\left\lAngle #1 \right\rAngle}

%%%%%%%%%%%%%%%%%%%%%%%%%%%%%%%%%%%%%%%%%%%%%

\title{A Fourth-Order Cut-cell Multigrid Method \\
  for Solving Elliptic Equations on Arbitrary Domains\thanks{Qinghai Zhang
    is the corresponding author (\email{qinghai@zju.edu.cn}).
    Jiyu Liu and Zhixuan Li are the co-first authors
    with equal contributions. 
    \funding{This work was supported by
      the Fundamental Research Funds
      for the Central Universities 226-2025-00254
      and the National Natural Science Foundation of China (\#12272346)}}
}

\author{Jiyu Liu\thanks{School of Mathematical Sciences,
    Zhejiang University, Hangzhou, Zhejiang, 310058, China.}
  \and Zhixuan Li\footnotemark[2]
  \and Jiatu Yan\footnotemark[2]
  \and Zhiqi Li\footnotemark[2]
  \and Qinghai Zhang\footnotemark[1] \footnotemark[2]
  \thanks{Institute of Fundamental and Transdisciplinary Research,
    Zhejiang University, Hangzhou, Zhejiang, 310058,
    China.}
}

\headers{A Cut-Cell Multigrid Method
  for Elliptic Equations}{J. Liu, Z. Li, J. Yan, Z. Li, and Q. Zhang}

\begin{document}
\maketitle

\begin{abstract}
To numerically solve a generic elliptic equation
 on two-dimensional domains with rectangular Cartesian grids, 
 we propose a cut-cell geometric multigrid method
 that features
 (1) general algorithmic steps
 that apply to two-dimensional constant-coefficient elliptic equations
 with both divergence and non-divergence forms
 and all types of boundary conditions, 
 (2) the versatility of
 handling both regular and irregular domains
 with arbitrarily complex topology and geometry, 
 (3) the fourth-order accuracy
 even at the presence of ${\cal C}^1$ discontinuities on the domain boundary, 
 and (4) the optimal complexity of $O(h^{-2})$.
%   and all equation parameters such as those for cross-derivative terms.
Test results %confirm the analysis
  demonstrate the generality, accuracy, efficiency, robustness,
  and excellent conditioning of the proposed method.

%%% Local Variables:
%%% mode: latex
%%% TeX-master: "../ellipticFV2D"
%%% End:

\end{abstract}

%{\small \textbf{Keywords. } Poisson's equation, geometric multigrid, fourth order, irregular domains. }
\begin{keywords}
  elliptic equations,
  finite-volume methods, 
  geometric multigrid methods,
  poised lattice generation, 
  generating cut cells without small volumes. 
%  Cartesian grid.
\end{keywords}
%{\small \textbf{AMS subject classifications. } }
\begin{AMS}
{65N08, 65N55}
% 65D05 Numerical interpolation
% 65D15 Algorithms for approximation of functions
%65N06 Finite difference methods for BVP involving PDEs
%65N08 Finite volume methods for BVP involving PDEs 
%65N55  Multigrid methods; domain decomposition for BVP involving PDEs
%65Y20  	Complexity and performance of numerical algorithms [See also 68Q25]
%68Q25  	Analysis of algorithms and problem complexity [See also 68W40]
\end{AMS}

\section{Introduction}
\label{sec:intro}

Consider the constant-coefficient elliptic equation
\begin{subequations}
  \label{eq:ccEllipticEq}
  \begin{alignat}{2}
    {\cal L} u &:= a \frac{\partial^2 u}{\partial x^2}
    + b \frac{\partial^2 u}{\partial x \partial y}
    + c \frac{\partial^2 u}{\partial y^2}
    = f(x,y)  \quad &&\text{in } \Omega,
    \\
    \calN u &= g(x,y)  &&\text{on }\partial \Omega.
  \end{alignat}
%    a \frac{\partial^2 u}{\partial x^2}
%    + b \frac{\partial^2 u}{\partial x \partial y}
%    + c \frac{\partial^2 u}{\partial y^2}
%    = f \quad \text{in } \Omega,
\end{subequations}
where the domain $\Omega$ is an open subset of $\bbR^2$, 
$u$ the unknown function,
%$u\in {\cal C}^2(\Omega, \bbR)$ the unknown function,
$a, b, c$ real numbers satisfying $b^2 - 4ac < 0$, 
and $\calN$ the boundary-condition operator:
\begin{equation}
  \label{eq:boundaryCondOp}
  \calN =
  \begin{cases}
    \mathbf{I}_\text{d}
    & \text{for the Dirichlet condition $u = g$},
    \\
    \frac{\partial}{\partial \bmn}
    & \text{for the Neumann condition $\bmn \cdot \nabla u = g$},
    \\
    \alpha_1 + \alpha_2 \frac{\partial}{\partial \bmn}
    & \text{for the Robin condition $\alpha_1 u + \alpha_2\bmn \cdot \nabla u = g$},
  \end{cases}
\end{equation}
% $ = \mathbf{I}_\text{d}$ for the Dirichlet condition $u = g$,
% $\calN = \frac{\partial}{\partial \bmn}$ for the Neumann condition
% $\bmn \cdot \nabla u = g$,
% and $\calN = \alpha_1 + \alpha_2 \cdot \frac{\partial}{\partial \bmn}$
% for the Robin condition
% $\alpha_1 u + \alpha_2\bmn \cdot \nabla u = g$ 
with $\alpha_1, \alpha_2 \in \bbR$
and $\mathbf{I}_\text{d}$ as the identity operator.

Set $(a, b, c) = (1, 0, 1)$ in (\ref{eq:ccEllipticEq})
 and we get Poisson's equation, 
 which is particularly important for designing numerical methods
 for solving partial differential equations (PDEs). 
For example,
at the core of the projection methods
 \cite{Brown2001, Johnston2004, Liu07, Zhang2014, Zhang2016:GePUP, li25:_gepup_es}
 for the incompressible Navier--Stokes equations (INSE) 
 is the numerical solution of a sequence of Poisson's equations
 and Helmholtz-like equations.
 
A myriad of numerical methods have been developed
 for solving PDEs on regular domains. 
In many real-world applications, however, 
 a problem domain may have complex topology and irregular geometry. 
Within the realm of finite element methods (FEMs), 
 the irregular geometry is handled
 either by an external mesh generator, 
 yielding the interface-fitted FEMs
 \cite{Babuska1970, BARRETT1987, Chen1998, Mu2013}, 
 or by local treatments of the irregular boundary
 inside its algorithms, 
 leading to the interface-unfitted FEMs~\cite{Li1998253, Gong08, Guo2023}. 
As for finite difference (FD) methods,
 two notable examples 
 are the immersed interface method~\cite{LeVeque1994, Li1998, Li2003, Linnick2005}, 
 where the discretization stencil is modified
 to incorporate jump conditions on the interface,
 and the ghost fluid method~\cite{Gibou2002, Liu2003, Liu2000, LiuMGFM2023}, 
 where physical variables are smoothly extended across the interface
 so that conventional FD formulas can be used for spatial discretization. 
For other FD methods on irregular domains,
 see \cite{Zhou2006, HOSSEINVERDI2018, Colnago2020, %, Chen2008
   Rapaka2020, Zhang:PLG}
 and references therein.

For finite volume (FV) formulations, 
 the cut cell method,
 also known as the embedded boundary (EB) method,
 consists of three main steps as follows.
 \begin{enumerate}[label={(CCM-\arabic*)}, leftmargin=*]
 \item Use a Cartesian grid to partition the domain $\Omega$ 
   into a set of cut cells,
   which are irregular near an irregular boundary
   and regular otherwise.
 \item Approximate the average of ${\cal L} u$ %integral of (\ref{eq:ccEllipticEq})
   over each cut cell by a linear expression
   of averages of $u$ over nearby cut cells, 
   cf. (\ref{eq:cellAvg}), 
   and consequently discretize (\ref{eq:ccEllipticEq})
   into a system of linear equations.
 \item Solve the linear system to obtain
   averages of $u$ over all cut cells
   as the numerical solution of (\ref{eq:ccEllipticEq}). % efficiently. 
 \end{enumerate}

For Poisson's equation in two-dimensions (2D), 
 Johansen and Colella~\cite{Johansen1998}
 proposed a second-order EB method, 
 in which  %is rewritten as the divergence form
 the average of $\nabla\cdot(\beta\nabla u)$ %the divergence form 
 over a cut cell
 is transformed by the divergence theorem
 into a sum of fluxes through cell faces. 
The fluxes through regular faces are approximated by standard FV formulas 
while those near an irregular domain boundary
 by quadratic interpolations. 
This second-order EB method has been extended to 
 the three-dimensional Poisson's equation~\cite{Schwartz2006}, 
 the heat equation~\cite{McCorquodale2001, Schwartz2006}, 
 and the INSE~\cite{Kirkpatrick2003, Trebotich15}.
To improve the second-order accuracy to the fourth order, 
% It is challenging to improve the accuracy of EB methods
%  from the second order to higher orders.
% % poses significant challenges. 
% %In particular,
% Point values at the cell center
%  are second-order approximates of cell averages 
%  and FD-type discretizations preserve the second-order accuracy.
% However, to achieve high-order accuracy, 
 one must successfully address three main difficulties,
 viz. the representation of irregular geometry, 
% with sufficient accuracy,
 the approximation of integrals over irregular cut cells,
 and the discretization of (\ref{eq:ccEllipticEq}) 
 with sufficient accuracy.

%Within the confines of fourth-order FV methods,
As far as fourth-order FV methods
 on irregular domains are concerned, 
 we are only aware of the high-order EB method
 developed by Colella~\cite{Colella2016}
 and colleagues~\cite{Devendran2017}, 
 in which the irregular domain is represented
 as $\Omega=\{\mathbf{x}: \phi(\mathbf{x})<0\}$
 with $\phi$ being a smooth level set function
 $\mathbb{R}^2\rightarrow \mathbb{R}$,
 the domain boundary $\partial \Omega$ as $\phi^{-1}(0)$, 
 and the unit normal vector of $\partial \Omega$
 as  $\mathbf{n}=\frac{\nabla \phi}{\|\nabla \phi\|}$.
Assuming the existence of a flux vector $\mathbf{F}(u)$ 
 satisfying ${\cal L}u = \nabla\cdot \mathbf{F}(u)$, 
 they use the divergence theorem
 to transform the integral of ${\cal L} u$
 over an irregular cut cell to 
 those of $\mathbf{F}(u)$ over the cell faces. 
To further discretize (\ref{eq:ccEllipticEq}) on a cut cell,
 they expand $\mathbf{F}(u)$ in its Taylor series,
 fit a local multivariate polynomial
 via weighted least squares~\cite[\S 2.3]{Devendran2017}, 
 calculate moments of monomials 
 over the cell faces,
 and finally approximate the integral of $\nabla\cdot \mathbf{F}(u)$
 over the cut cell with
 a linear combination of integrals of $\mathbf{F}(u)$
 over nearby cell faces. 
% to the appropriate order.
As such, the approximation of the irregular geometry
 and the discretization of (\ref{eq:ccEllipticEq})
 are tightly coupled.
 
In spite of its successes,
 the aforementioned fourth-order EB method~\cite{Colella2016,Devendran2017}  
 has a number of limitations.
First, the assumption of the divergence form
 \mbox{${\cal L}u = \nabla\cdot \mathbf{F}(u)$} 
 limits the generality of the EB method, 
 since many elliptic equations
 with Neumann conditions
 and a cross-derivative term have no divergence form. 
Second,
 the representation of irregular geometry
 by a level set function $\phi$
 leads to severe accuracy deterioration
 at the presence of kinks, i.e., ${\cal C}^1$ discontinuities,
 at which $\frac{\nabla \phi}{\|\nabla \phi\|}$
 has an $O(1)$ error in approximating
 the normal vector $\mathbf{n}$.
Indeed,  Devendran et al.~\cite{Devendran2017}
 reported that the accuracy of their fourth-order EB method
 drops to the first order or non-convergence 
 in solving a Poisson equation
 on a domain with kinks, cf. \Cref{tab:squareMinusFourDisks}. 
Although this accuracy deterioration can be alleviated
 by mollifying the kinks,
 this mollification is not applicable to all cases, 
 and its effectiveness
 depends heavily on the nature of the equation and
 the mollification formula~\cite{Devendran2017}. 
% Third,
%  the coupling of the approximation of the irregular geometry
%  and the discretization of (\ref{eq:ccEllipticEq})
%  may not be efficient
%  in that the approximation of $\partial\Omega$
%  must be carried out 
%  for each different form of (\ref{eq:ccEllipticEq}). 
Third,  
 there is no guarantee that
 the lattices (or stencils) for weighted least squares
 such as that in \cite[Sec. 2.3.3]{Devendran2017}
 work well for all geometry. 
On one hand, 
 a wide stencil may lead to a large number of redundant points,
 adversely affecting computational efficiency.
Then it is desirable to have a poised lattice
 whose cardinality equals the dimension of the space
 of multivariate interpolating polynomials \cite{Zhang:PLG}. 
On the other hand, 
 the local geometry might make it impossible
 to fit a high-order multivariate polynomial
 out of nearby cut cells.
In this case, one needs to know the highest degree
 of interpolating polynomials for which
 the local geometry admits. 
As far as we know,
 the only algorithm that meets these requirements
 is that of the poised lattice generation (PLG) \cite{Zhang:PLG}
 for FD methods.
% The finite volume (FV) schemes, compared with the methods discussed earlier, 
%  are naturally conservative since the divergence theorem 
%  is applied to each control volume.
% Unlike conventional unstructured or body-fitted grids which 
%  demand large computational memory to store cells, 
%  faces and their connectivity table, Cartesian grids 
%  need no explicit connectivity storage
%  because they can be mapped by simple indices.
% In these works, the discrete linear systems are typically solved 
%  by geometric multigrid methods~\cite{Briggs:A_Multigrid_Tutorial};
%  algebraic multigrid (AMG)~\cite{Stuben2001} serves as an alternative, 
%  particularly for problems with complex geometry.
% Compared with traditional % geometric multigrid 
%  methods, these geometric multigrid solvers 
%  require specific design near the irregular boundary.

The above discussions pertain to (CCM-1) and (CCM-2).
As for (CCM-3),
 the linear system that results from discretizing
 the elliptic equation is typically solved 
 by a geometric multigrid method on regular domains
 and by an algebraic multigrid method 
 on irregular domains~\cite{Briggs:A_Multigrid_Tutorial}. 
Some researchers~\cite{Devendran2017, Johansen1998, Schwartz2006} 
 extend geometric multigrid methods to irregular boundaries
 by modifying the restriction and interpolation operators 
 on cells near irregular boundaries; 
 but it is not clear whether
 these multigrid methods can achieve the optimal complexity of
 $O(h^{-2})$.
It is observed in~\cite{Trebotich15}
 that these methods struggle with convergence
 on domains with very complex geometry.

The above discussions lead to questions as follows. 
\begin{enumerate}[label={(Q-\arabic*)}, leftmargin=*]
\item To represent an irregular domain $\Omega$
  with arbitrary geometry and topology, 
  can we have a simple and efficient scheme 
  that is always fourth-order accurate?
  Furthermore, for a given threshold $\epsilon\in(0,1)$
  and a Cartesian grid of size $h$, 
  can we partition $\Omega$ into a set of cut cells
  whose volumes are between $\epsilon h^2$ and $2 h^2$?
% Given that the problem domains can be arbitrarily complex in geometry and topology,
%  is there an accurate and efficient representation of such domains?
\label{q:DomainRepAndCutCell}
\item Can we design an FV discretization
  of (\ref{eq:ccEllipticEq}) on these cut cells
  so that (i) the discretization processes
  depends neither on values of $(a,b,c)$ in (\ref{eq:ccEllipticEq}a)
  nor on forms of boundary conditions in (\ref{eq:ccEllipticEq}b), 
  and (ii) the fourth-order accuracy %of the solution
  depends neither on the topology of $\Omega$
  nor on the \emph{absence of kinks} on $\partial\Omega$? 
\label{q:FV-discretization}

\item For the FV discretization in \ref{q:FV-discretization}, 
  % generic elliptic equation (\ref{eq:ccEllipticEq})
  % over an arbitrarily complex domain, 
  can we further develop a geometric multigrid method
  that solves the resulting linear system  
  with \emph{optimal} complexity? 
\label{q:efficiency}
\end{enumerate}

In this paper, we give positive answers to all above questions
 by proposing a cut-cell geometric multigrid method
 for solving (\ref{eq:ccEllipticEq})
 over arbitrary 2D domains.

%Yin space here.
% 1. the answer is Yin space,
% a topological space for modelling physically meaningful regions in the plane.
% 2. it has a simple representation schemes, and Boolean algorithms.
% 3. We will formulate the domains and the cut-cells in terms of Yin sets.
Our answer to \ref{q:DomainRepAndCutCell} is based 
 on Yin sets~\cite{Zhang2020:YinSets},
 a mathematical model of 2D continua with arbitrary topology, 
 which we briefly review in \Cref{sec:yin-space}. 
Utilizing the Boolean algebra of Yin sets in \cite{Zhang2020:YinSets}, 
 we propose a cut-cell algorithm in \Cref{sec:doma-rep}
 to generate a set $\mathsf{C}_{\epsilon}^h$ of cut cells
 so that the regularized union of these cut cells
 equals $\Omega$
 and the volume of each cut cell is no less than $\epsilon h^2$,
 precluding the well-known small-volume problem in FV methods. 

\ref{q:FV-discretization} is answered in \Cref{sec:discretization}. 
% where we start with a simple dichotomy: 
We call a cut cell a \emph{symmetric finite volume (SFV) cell}
 if classical symmetric FV formulas apply to it; 
 otherwise it is called a PLG cell, cf. \Cref{def:SFVCell}. 
The fourth-order discretization of the integral of (\ref{eq:ccEllipticEq})
 over SFV cells is given in \Cref{sec:symmetricFD-cells}
 and that over PLG cells
 is based on the PLG algorithm~\cite{Zhang:PLG}
 summarized in \Cref{sec:PLG}. 
Given $K\subset\bbZ^\Dim$,
 a starting point $q\in K$,
 and the total degree $n$ of $\Dim$-variate polynomials,
 this PLG algorithm generates a poised lattice
 on which $\Dim$-variate polynomial interpolation is unisolvent.
In \Cref{sec:Discretization-PLG-cells}, 
 this PLG algorithm is adapted to the FV formulation
 to generate linear equations
 that approximates integrals of (\ref{eq:ccEllipticEq})
 over PLG cells to sufficient accuracy. 
The complete linear system %in a block form
 is summarized in \Cref{sec:discreteElliptic}.

In \Cref{sec:multigrid}, 
 we answer \ref{q:efficiency}
 by proposing a cut-cell geometric multigrid method, 
 which hinges on the fact that numbers of PLG and SFV cells
 are $O(h^{-1})$ and $O(h^{-2})$, respectively;
 see the opening paragraph of \Cref{sec:multigrid}
 for other key ideas. 
In \Cref{sec:Tests},
 we demonstrate the accuracy, efficiency, generality,
 robustness, and excellent conditioning 
 of the proposed cut-cell method
 by results of a number of numerical tests.
We conclude this work in \Cref{sec:conclusions}
 with several future research prospects.
% We give a total ordering of the PLG cells
%  and design an optimal LU factorization of
%  the subblock matrix corresponding to PLG cells.
%  is  We couple an  of 
%  the components of classical multigrid to
% account for irregular domains and high-order discretization.
% We show by analysis and numerical tests that
% the linear systems are solved %to produce fourth-order accurate solutions 
% with optimal complexity. 

%%% Local Variables:
%%% mode: latex
%%% TeX-master: "../ellipticFV2D"
%%% End:

% LocalWords:  INSE EB discretize accuarcy discretization monomials
% LocalWords:  Neumann cardinality SFV PLG FV multigrid FD FEM Navier
% LocalWords:  incompressible

\section{Modeling continua with Yin sets}
\label{sec:yin-space}

In this section,
 we briefly review Yin sets \cite{Zhang2020:YinSets} 
 as a model of topological structures and geometric features
 of 2D continua.
% \Cref{def:yin-space} leads to
%  the unique and efficient boundary representation
%  in \eqref{eq:uniqueRepOfYinSets}
%  and the Boolean algebra in \ref{coro:BooleanAlgebra}.

In a topological space ${\mathcal X}$,
 the \emph{complement} of a subset ${\mathcal P}\subseteq {\mathcal X}$,
 written ${\mathcal P}'$,
 is the set ${\mathcal X}\setminus {\mathcal P}$.
% A subset of ${\mathcal X}$
%  is \emph{closed} if its complement is open.
The \emph{closure} of a set ${\mathcal P}\subseteq{\mathcal X}$,
 written $\overline{\mathcal P}$,
 is the intersection of all closed 
 supersets of ${\mathcal P}$.
The \emph{interior} of ${\mathcal P}$, written ${\mathcal P}^{\circ}$,
 is the union of all open subsets of ${\mathcal P}$.
% in ${\mathfrak T}$.
The \emph{exterior} of ${\mathcal P}$,
 written ${\mathcal P}^{\perp}:= {\mathcal P}^{\prime\circ}
 :=({\mathcal P}')^{\circ}$,
 is the interior of its complement.
% By the identity $\overline{\mathcal P} ={\mathcal P}^{\prime\circ\prime}$
%  \mbox{\cite[p. 58]{givant09:_introd_boolean_algeb}},
%  we have 
%  ${\mathcal P}^{\perp}={\mathcal P}^{-\prime}$.
A point $\mathbf{x}\in {\mathcal X}$ is
 a \emph{boundary point} of ${\mathcal P}$
 if $\mathbf{x}\not\in {\mathcal P}^{\circ}$
 and $\mathbf{x}\not\in {\mathcal P}^{\perp}$.
The \emph{boundary} of ${\mathcal P}$, written $\partial {\mathcal P}$,
 is the set of all boundary points of ${\mathcal P}$.
% It can be shown that
%  ${\mathcal P}^{\circ}={\mathcal P}\setminus \partial {\mathcal P}$
%  and
%  $\overline{\mathcal P}= {\mathcal P}\cup \partial {\mathcal P}$.

A subset $\calP$ in ${\cal X}$ is \emph{regular open}
 if it coincides with the interior of its closure.
%and is regular open iff $\calS = \calS^{\perp \perp}$. 
For $\calX = \bbR^2$, 
a subset $\calS \subset \bbR^2$ is \emph{semianalytic}
if there exist a finite number of analytic functions
$g_i : \bbR^2 \rightarrow \bbR$
such that $\calS$ is in the universe of a finite Boolean algebra
formed from the sets
$\calX_i = \left\{ \bmx \in \bbR^2 : g_i(\bmx) \ge 0 \right\}$. 
In particular, a semianalytic set is \emph{semialgebraic}
if all the $g_i$'s are polynomials. 
These concepts lead to 

\begin{definition}[Yin Space~\cite{Zhang2020:YinSets}]
  \label{def:yin-space}
  A \emph{Yin set} $\calY \subseteq \bbR^2$
   is a regular open semianalytic set 
   with bounded boundary.
  The Yin space $\bbY$ is
   the class of all Yin sets. 
\end{definition}

In \Cref{def:yin-space}, 
 the regularity captures the salient feature
 that continua are free of low-dimensional elements
 such as isolated points and crevices,  
 the openness leads to a unique boundary representation of any Yin set, 
 and the semianalyticity ensures that a finite number of entities
 suffice for the boundary representation.
%See~\cite{Zhang2020:YinSets} for more explanations.

Each Yin set $\calY \neq \emptyset, \bbR^2$
 can be uniquely expressed \cite[Corollary 3.13]{Zhang2020:YinSets} as
 \begin{equation}
   \label{eq:uniqueRepOfYinSets}
   \calY = \cup^{\perp\perp}_{j} {\cal Y}_j
   = \cup^{\perp\perp}_{j}
   \cap_{i} \intr(\gamma_{j, i}),
 \end{equation}
where ${\cal Y}_j$ is the $j$th connected component of $\calY$,
the binary operation $\cuppp$ the \emph{regularized union} defined as
$\calS \cuppp \calT := \left(\calS \cup \calT \right)^{\perp \perp}$, 
$\{\gamma_{j,i}\}$ 
the set of pairwise almost disjoint Jordan curves
satisfying $\partial {\cal Y}_j=\cup_i\gamma_{j,i}$, 
%$\gamma$ an oriented Jordan curve,
and $\intr(\gamma_{j,i})$ the complement of $\gamma_{j,i}$
that always lies at the left of an observer
who traverses $\gamma_{j,i}$ according to its orientation.
% The whole collection $\{\gamma_{j,i}\}$
% can be further arranged in a fashion
% such that the topological information (such as Betti numbers)
% can be easily extracted within constant time;
% see~\cite[Definition 3.17]{Zhang2020:YinSets}.

\begin{theorem}[Zhang and Li~\cite{Zhang2020:YinSets}]
  \label{thm:BooleanAlgebra}
  $\left(\bbY, \cuppp, \cap, ^\perp, \emptyset, \bbR^2 \right)$
  is a Boolean algebra.
\end{theorem}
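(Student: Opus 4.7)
The plan is to establish the theorem by first verifying that $\cuppp$, $\cap$, and $^\perp$ are well-defined as operations on $\bbY$ (i.e., closure), and then deriving each Boolean axiom. The key leverage is the classical fact that the collection of \emph{all} regular open subsets of $\bbR^2$ already forms a complete Boolean algebra under precisely these three operations, with $\emptyset$ and $\bbR^2$ as units. Thus the bulk of the work is to show that the additional two Yin-set conditions, namely semianalyticity and boundedness of the boundary, are preserved; once closure is in hand, every axiom descends directly from the ambient regular-open algebra on $\bbR^2$.

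For closure, first note that $\emptyset, \bbR^2 \in \bbY$ trivially. Given $\calA, \calB \in \bbY$: (i) $\calA \cap \calB$ is regular open because intersection preserves regularity among open sets, it is semianalytic because the class of semianalytic sets is closed under finite Boolean combinations by definition, and $\partial(\calA \cap \calB) \subseteq \partial\calA \cup \partial\calB$ is bounded. (ii) $\calY^{\perp} = (\calY')^{\circ}$ is regular open by construction, has $\partial \calY^{\perp} \subseteq \partial \calY$ (hence bounded), and is semianalytic provided one knows that complementation and interior both preserve semianalyticity. (iii) Closure under $\cuppp$ then follows by writing $\calA \cuppp \calB = (\calA^{\perp} \cap \calB^{\perp})^{\perp}$ via De Morgan in the regular-open algebra, combined with the containment $\partial\bigl((\calA \cup \calB)^{\perp\perp}\bigr) \subseteq \partial\calA \cup \partial\calB$.

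With closure established, the axioms follow by restriction: commutativity of both operations is immediate; associativity of $\cuppp$ reduces to idempotence of the regularization $^{\perp\perp}$ on regular open sets; distributivity, the identity laws $\calY \cuppp \emptyset = \calY$ and $\calY \cap \bbR^2 = \calY$, and the complementation laws $\calY \cuppp \calY^{\perp} = \bbR^2$ and $\calY \cap \calY^{\perp} = \emptyset$ are all inherited from the regular-open Boolean algebra on $\bbR^2$. The De Morgan identity invoked above can itself be checked from first principles using $(\calS^{\circ})' = \overline{\calS'}$.

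The main obstacle is the semianalyticity half of closure under $^\perp$. While Boolean closure of semianalytic sets is manifest from the definition as the universe of a finite Boolean algebra generated by $\{g_i \ge 0\}$, interior and closure are \emph{topological} operations, so one must appeal to the non-trivial Łojasiewicz-type result that the semianalytic class is stable under topological closure and interior (equivalently, under stratification into analytic cells). A secondary, more elementary subtlety is confirming that the regularization $^{\perp\perp}$ does not create new boundary points; this follows from $(\calA \cup \calB)^{\perp\perp} \subseteq \overline{\calA \cup \calB}$, so that after sandwiching between $\calA\cup\calB$ and its closure, the new boundary cannot escape $\partial\calA \cup \partial\calB$.
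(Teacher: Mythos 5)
Your proposal is sound in outline, but note that the paper itself gives no proof of this theorem: it is imported verbatim from the cited reference \cite{Zhang2020:YinSets}, so there is nothing in this manuscript to compare against line by line. Your route --- embed $\bbY$ into the classical complete Boolean algebra of regular open subsets of $\bbR^2$ under $\cap$, $\cuppp$, and $^\perp$, and reduce everything to closure of the two extra Yin-set conditions --- is the natural one, and is essentially how the source reference argues. Your closure checks are correct: intersection of regular open sets is regular open; $\partial(\calA\cap\calB)\subseteq\partial\calA\cup\partial\calB$ and $\partial(\calY^\perp)\subseteq\partial\calY$ keep boundaries bounded; and the sandwich $\calA\cup\calB\subseteq(\calA\cup\calB)^{\perp\perp}\subseteq\overline{\calA\cup\calB}$ correctly shows regularization creates no new boundary points. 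You also correctly isolate the one genuinely non-elementary ingredient: under the paper's definition, semianalytic sets are closed under finite Boolean combinations by fiat, but closure under the \emph{topological} operations of interior and closure is a Łojasiewicz-type stratification result that must be cited, not derived. Two small points you could tighten: (i) you should remark that $\emptyset$ and $\bbR^2$ are themselves semianalytic with bounded (empty) boundary, e.g.\ by taking $g_1\equiv 1$; (ii) the complementation law $\calY\cuppp\calY^\perp=\bbR^2$ deserves one line ($\calY\cup\calY^\perp$ is dense because $\partial\calY$ has empty interior), since it is the axiom where the regularized union does real work. With those additions the argument is complete modulo the two classical inputs you name.
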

% It can be shown~\cite[\S 10]{Givant:BooleanAlgebra} that
% the class of all regular open subsets of $\calX$
% is closed under the regularized union. 

\begin{corollary}
  \label{coro:BooleanAlgebra}
  Denote by $\bbY_{\mathrm{c}}$ the subspace of $\bbY$
  where the boundary of each Yin set 
  is constituted by a finite number of cubic splines.
  Then $\bbY_{\mathrm{c}}$ is a sub-algebra of $\bbY$. 
\end{corollary}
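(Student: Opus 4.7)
The plan is to verify the four closure conditions of a Boolean sub-algebra: $\bbY_{\mathrm{c}}$ contains $\emptyset$ and $\bbR^2$ and is closed under $^\perp$, $\cap$, and $\cuppp$. Both $\emptyset$ and $\bbR^2$ have empty boundary and are therefore vacuously in $\bbY_{\mathrm{c}}$. By \Cref{thm:BooleanAlgebra}, all three operations produce Yin sets, so the only thing to check is that the resulting boundaries are again finite unions of cubic splines.

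For the complement, I would use the standard topological identity $\partial \calY = \partial \calY^\perp$ valid for every regular open set. Hence $\calY^\perp\in\bbY_{\mathrm{c}}$ whenever $\calY\in\bbY_{\mathrm{c}}$. Since $\cuppp$ can be written in terms of $\cap$ and $^\perp$ by de~Morgan (inside the Boolean algebra of \Cref{thm:BooleanAlgebra}), it suffices to handle $\cap$. Thus the substance of the proof is:
\emph{if $\calY_1,\calY_2\in\bbY_{\mathrm{c}}$, then $\partial(\calY_1\cap\calY_2)$ is a finite union of cubic splines.}

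I would prove this claim in four steps. First, write each $\partial\calY_i$ as the union of finitely many cubic spline arcs $\gamma_{i,k}$, and then decompose each $\gamma_{i,k}$ at its knots into finitely many arcs of genuine cubic polynomial curves in $\bbR^2$. Second, for every pair $(\gamma_{1,k},\gamma_{2,\ell})$ of such polynomial arcs, analyze their meeting set: since each arc lies on an algebraic curve of degree at most three, B\'ezout's theorem implies that two arcs either share only finitely many isolated points or coincide along a common sub-arc that is itself algebraic of degree at most three. In both cases the meeting locus is semialgebraic and its description is controlled by finitely many parameters. Third, insert all isolated intersection points as new knots, and along any overlap segment insert its two endpoints as new knots; the cubic arcs thereby get subdivided into a finite refinement, every piece of which is still a cubic polynomial arc (hence a cubic spline). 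Fourth, use the characterization \eqref{eq:uniqueRepOfYinSets} of Yin sets: $\partial(\calY_1\cap\calY_2)\subseteq\partial\calY_1\cup\partial\calY_2$, so after the refinement it is exactly a union of a subcollection of the new sub-arcs; concatenating those that share endpoints yields a finite family of cubic splines.

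The main obstacle will be the degenerate configurations in the second step, namely cubic arcs that are tangent or that overlap on a positive-length interval. Transverse crossings are immediate from B\'ezout, but tangencies and shared sub-arcs must be argued via the semianalyticity built into \Cref{def:yin-space}: the overlap of two semialgebraic planar arcs is again semialgebraic of bounded complexity, so it decomposes into finitely many maximal common sub-arcs together with isolated tangency points, which can all be absorbed as new knots in the third step. Once this case analysis is set up carefully, the rest of the argument is bookkeeping, and closure under $\cuppp$ follows from closure under $\cap$ and $^\perp$ together with \Cref{thm:BooleanAlgebra}.
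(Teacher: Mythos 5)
The paper states this corollary without any proof --- it is presented as an immediate consequence of \Cref{thm:BooleanAlgebra}, with the actual boundary manipulations deferred to the implementation in \cite{Zhang2020:YinSets} --- so there is no written argument to compare yours against. Your proposal is a correct way to supply the missing details, and the overall reduction is sound: $\emptyset$ and $\bbR^2$ are trivial; the identity $\partial\calY^{\perp}=\partial\calY$ does hold for regular open sets (for such $\calY$ one has $\overline{\calY'}=\overline{(\overline{\calY})'}$, whence both boundaries equal $\overline{\calY}\cap\overline{(\overline{\calY})'}$); and $\calS\cuppp\calT=\left(\calS^{\perp}\cap\calT^{\perp}\right)^{\perp}$ reduces everything to closure under $\cap$. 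The B\'ezout/semialgebraic analysis of pairwise arc intersections is the right tool for that last step, and the piece you leave as ``bookkeeping'' does go through: after inserting all isolated intersection points and overlap endpoints as knots, each refined open sub-arc of $\partial\calY_1$ is connected and disjoint from $\partial\calY_2$, hence lies entirely in $\calY_2$ or entirely in $\calY_2^{\perp}$, so membership in $\partial(\calY_1\cap\calY_2)$ is constant on each piece; a sub-arc reparametrized over a subinterval is still a cubic arc, so the surviving pieces assemble into finitely many cubic splines. The one point I would urge you to make explicit is the treatment of sub-arcs common to both boundaries: whether such an arc survives into $\partial(\calY_1\cap\calY_2)$ depends on whether $\calY_1$ and $\calY_2$ lie locally on the same or opposite sides of it, and the claim that this side assignment is constant along the interior of each maximal common sub-arc relies on the almost-disjoint Jordan-curve structure of the boundary in (\ref{eq:uniqueRepOfYinSets}). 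With that spelled out, your argument is complete.
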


The above Boolean algebra is implemented
 in \cite{Zhang2020:YinSets}.
 % and employed in this work
 % to cut and merge cells; see \Cref{sec:grid}.
In this work, the problem domain $\Omega$ in (\ref{eq:ccEllipticEq})
 is assumed to be in $\mathbb{Y}$
 and approximated by a Yin set in $\bbY_\mathrm{c}$.

%%% Local Variables:
%%% mode: latex
%%% TeX-master: "../ellipticFV2D"
%%% End:

\section{Partitioning a Yin set into cut cells}
\label{sec:doma-rep}

The arbitrary complexity of $\Omega$ is handled
 by a divide-and-conquer approach:
 in \Cref{sec:cutCells} we cut $\Omega$ by a Cartesian grid
 to generate a set $\mathsf{C}_{\Omega}$ of cut cells. 
% some of these cut cells might have very small volumes.
% leading to a large condition number of the discrete linear system.
In \Cref{sec:mergeCells},
 we merge adjacent cut cells
 so that, for a user-specified value $\epsilon\in(0,1)$,
 the volume fraction of each cut cell is no less than $\epsilon$,
 thus preventing small volumes of the cut cells
 to ensure good conditioning of spatial discretizations
 in \Cref{sec:discretization}. 
 
\subsection{Cut cells}
\label{sec:cutCells}

%We assume that the domain $\Omega$ is a Yin set,
We embed the domain $\Omega$
 inside an open rectangle $\Omega_R\supset \Omega$ %$ \in \mathbb{Y}_c$
 and divide $\Omega_R$ by a Cartesian grid 
 into square control volumes or \emph{cell}s, 
\begin{equation}
  \label{eq:controlVolume}
  \mathbf{C}_\bmi \coloneqq 
  \left(\bmi h, (\bmi+\mathbbm{1})h \right), 
\end{equation}
where $h$ is the uniform grid size, 
$\bmi\in\mathbb{Z}^2$ a multi-index, 
and $\mathbbm{1}\in\mathbb{Z}^2$ the multi-index 
with all components equal to one.
Note that the assumption of $h$ being uniform
is for ease of exposition only, 
as our algorithm also applies to non-uniform grids.

We initialize the $\bmi$th \emph{cut cell} as 
% \begin{equation}
%   \label{eq:cutCell}
$\calC_\bmi = \mathbf{C}_\bmi \cap \Omega$
  % \quad \calF_\bmi \coloneqq \mathbf{C}_\bmi \cap \partial \Omega,
%\end{equation}
and call $\calC_\bmi$ an \emph{empty cell}
if $\mathcal{C}_\ibold = \emptyset$,
a \emph{regular cell} if $\mathcal{C}_\ibold = \mathbf{C}_\ibold$, 
or an \emph{irregular cell} otherwise.
Along the $d$th dimension,
 the \emph{higher face} and the \emph{lower face}
 of the $\bmi$th cell $\mathbf{C}_{\bmi}$ 
 are respectively given by
\begin{equation}
  \label{eq:higherAndLowerFaces}
  \rmF_{\bmihalf} := \left( (\bmi + \basis^d) h,
    (\bmi + \bbone) h \right), \quad
  \rmF_{\ibold-\frac{1}{2}\basis^d}
  := \left( \bmi h,
    (\bmi + \bbone - \basis^d) h \right), 
\end{equation}
where $\basis^d \in \bbZ^2$ is the multi-index 
whose components are all zero except the $d$th component being one.
The \emph{higher/lower cut faces} 
 and the \emph{cut boundary} of the $\bmi$th cell $\mathbf{C}_{\bmi}$ 
 are respectively given by
\begin{equation}
  \label{eq:cutFaceAndCutBoundary}
  \calF_{\ibold\pm\frac{1}{2}\basis^d}
  := \rmF_{\ibold\pm\frac{1}{2}\basis^d} \cap \Omega
  \quad\text{ and }\quad
  \calB_{\bmi} := \mathbf{C}_{\bmi} \cap \partial \Omega. 
\end{equation}

% Since $\Omega$, $\Omega_R$, $\mathbf{C}_\bmi$ are all Yin sets, 
% \cref{coro:BooleanAlgebra} implies
% $\Omega_R = \cup^{\perp\perp}_{\bmi} \mathbf{C}_\bmi$,
% $\Omega = \cup^{\perp\perp}_{\bmi} \calC_\bmi$,
% and that each cut cell be a Yin set.
% where $\cup^{\perp\perp}$ is the regularized union operation
% in \cref{thm:BooleanAlgebra}.

For a domain $\Omega$ and its embedding rectangle $\Omega_R$, 
 the \emph{set of cut cells} is defined as
\begin{equation}
  \label{eq:setOfCutCells}
  \mathsf{C}_{\Omega} := \left\{
    \mathcal{C}_\ibold: %\mathcal{C}_\ibold\in \mathbb{Y}; \
    \mathcal{C}_\ibold\ne \emptyset; \ 
    \cup_{\ibold}^{\bot\bot} \mathcal{C}_\ibold = \Omega
  \right\}.
\end{equation}
Thanks to the rectangular structure of the Cartesian grid,
the \emph{set of neighbors of a cut cell} $\mathcal{C}_\jbold$
is easily obtained as 
% \begin{equation}
%   \label{eq:neighborCutCells}
 $ \mathsf{N}_{\jbold} := \left\{
    \mathcal{C}_\ibold:
    \mathcal{C}_\ibold\in \mathsf{C}_{\Omega},\ 
    \|\ibold-\jbold\|_1=1  \right\}$.
%\end{equation}
%
The connected components of $\mathcal{C}_{\mathbf{j}}$
 are denoted by $\mathcal{C}_{\mathbf{j}}^1$,
 $\mathcal{C}_{\mathbf{j}}^2$, $\ldots$  
 so that $\mathcal{C}_{\mathbf{j}}=
 \cup^{\perp\perp}_k \mathcal{C}_{\mathbf{j}}^k$.
% In the common case of $\mathcal{C}_{\mathbf{j}}$
%  having only one component, 
%  it is denoted by
%  either $\mathcal{C}_{\mathbf{j}}^1$ or $\mathcal{C}_{\mathbf{j}}$.
The \emph{set of neighboring components of
 a cut-cell component} $\mathcal{C}_{\mathbf{j}}^k$
 is defined as
\begin{equation}
  \label{eq:neighborCutCellComponents}
  \mathsf{N}_{\jbold}^k := \left\{
    \mathcal{C}_\ibold^{\ell}:
    \mathcal{C}_\ibold\in \mathsf{C}_{\Omega},\ 
    \|\ibold-\jbold\|_1=1,\
    \overline{\mathcal{C}_{\mathbf{j}}^{k}}
    \cap
    \overline{\mathcal{C}_{\mathbf{i}}^{\ell}} \neq
    \emptyset \right\}.
\end{equation}
Note that a neighboring cut cell $\mathcal{C}_\ibold$
 might have multiple components in $\mathsf{N}_{\jbold}^k$.

\subsection{Resolving the small-volume problem
  by cell merging}
\label{sec:mergeCells}

In practice, the \emph{volume} $\lVert \calC_\bmi \rVert$ 
of a cut cell $\calC_\bmi$ may be very small,
leading to ill-conditioning of the discretized operator.
This problem has been addressed by a number of approaches such as
cell merging~\cite{Ji2010},
redistribution~\cite{Almgren1994}, 
and special discretization schemes~\cite{Forrer1998}. 

Our solution of the small-volume problem
 is a novel cell-merging algorithm
 whose output is $\mathsf{C}_{\epsilon}^h(\Omega)$,
 a regularized set of cut cells of $\Omega$
 where each cut cell has only one connected component
 and the volume fraction of this component
 is no less than a user-specified lower bound $\epsilon$: 
\begin{equation}
  \label{eq:mergedCutCellSet}
  \mathsf{C}_{\epsilon}^h(\Omega) :=\left\{
    \mathcal{C}_\ibold: 
    \mathcal{C}_\ibold=\mathcal{C}_\ibold^1 \ne \emptyset; \ 
    \cup_{\ibold}^{\bot\bot} \mathcal{C}_\ibold = \Omega;\ 
    \|{\cal C}_{\bmi}\|\ge\epsilon h^2
  \right\},
\end{equation}
where $\mathcal{C}_\ibold=\mathcal{C}_\ibold^1$ means
 that each $\mathcal{C}_{\ibold}$
 consists of only one connected component.
As a design choice to ensure
 that the unknown function $u$ in (\ref{eq:ccEllipticEq})
 has only one cell average per cut cell,
 the condition $\mathcal{C}_\ibold=\mathcal{C}_\ibold^1$ 
 retains the simplicity of rectangular grids
 and facilitates the design of the multigrid solver in \Cref{sec:multigrid}.

\begin{algorithm}
  \caption{CutAndMergeCells}
  \label{alg:merging}
    \textbf{Input:} $\Omega\in\mathbb{Y}_c$: the problem domain; 
    \\ $\parbox[t]{12.5mm}\  \Omega_R$:
    the rectangle that contains $\Omega$; 
    \\ $\parbox[t]{12.5mm}\  h$:
    the size of the Cartesian grid that discretizes $\Omega_R$;
    \\ $\parbox[t]{12.5mm}\  \epsilon\in(0,1)$: a user-specified threshold
    of small volume fractions.\\
    \textbf{Precondition:} $h$ is sufficiently small
    to resolve the topology and geometry of $\Omega$.
    \\
    \textbf{Output:}  $\mathsf{C}_{\epsilon}^h$:
    the set of cut cells of $\Omega$ in (\ref{eq:mergedCutCellSet}). 
  \begin{algorithmic}[1]
    \State
    $\mathsf{C}_{\epsilon}^h \leftarrow \mathsf{C}_{\Omega} $
    in (\ref{eq:setOfCutCells})
    % $\left\{ {\cal C}_\ibold=\mathrm{C}_\ibold \cap \Omega:
    %   {\cal C}_\ibold\ne \emptyset,\ 
    %   \mathrm{C}_\ibold\subset \Omega_R \right\}$.
    % : the set of cut cells obtained by embedding $\Omega$ into the Cartesian grid.
    \For{\textbf{each} cut cell $\mathcal{C}_\ibold = \left(
        \cup^{\bot\bot} \mathcal{C}_\ibold^k \right)_{k=1}^{n_{\ibold}}$
      with $n_{\ibold} \geq 2$ connected components}
    \State
    $\mathcal{C}_\ibold \leftarrow \mathcal{C}_\ibold^m$
    where $m = \argmax_{k=1}^{n_{\ibold}}\{
    \Vert\mathcal{C}_\ibold^k\Vert\}$
    \For{\textbf{each} $k=1,\ldots,m-1,m+1,\ldots,n_{\ibold}$}
    \State
    $\mathcal{C}_\jbold^{i} \leftarrow
    \mathcal{C}_\jbold^{i} \cup^{\bot\bot} \mathcal{C}_\ibold^k$
    where $\mathcal{C}_\jbold^{i} =
    \arg\min_{{\cal C}_{\mathbf{j}'}^{\ell}\in \mathsf{N}_{\ibold}^k}
    \left| \Vert\mathcal{C}_{\mathbf{j}'}^{\ell} \Vert +
    \Vert\mathcal{C}_{\mathbf{i}}^k \Vert  - h^2\right|$
    \EndFor
    \EndFor
    \For{\textbf{each} cut cell $\mathcal{C}_\ibold$
      satisfying $\Vert \mathcal{C}_\ibold \Vert < \epsilon h^2$}
    \State
    $\mathcal{C}_\jbold \leftarrow
    \mathcal{C}_\jbold \cup^{\bot\bot} \mathcal{C}_\ibold$
    where $\mathcal{C}_\jbold =
    \arg\min_{{\cal C}_{\mathbf{j}'}\in \mathsf{N}_{\ibold}^1}
    \left| \Vert\mathcal{C}_{\mathbf{j}'} \Vert +
    \Vert\mathcal{C}_{\mathbf{i}} \Vert  - h^2\right|$
    \State
    $\mathsf{C}_{\epsilon}^h \leftarrow
    \mathsf{C}_{\epsilon}^h \setminus \mathcal{C}_\ibold$
    \EndFor
    \State \Return $\mathsf{C}_{\epsilon}^h$ 
  \end{algorithmic}	
\end{algorithm}

\begin{figure}
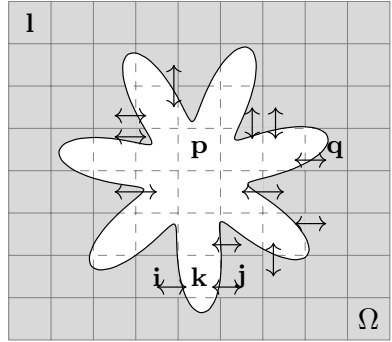

    \centering
  \includestandalone[width=0.4\textwidth]{tikz/Cutcell}
\caption{An illustration of \Cref{alg:merging}
  in generating $\mathsf{C}_{\epsilon}^h(\Omega)$ in (\ref{eq:mergedCutCellSet})
  with $\epsilon=0.2$ 
  by cutting and merging cells for the domain $\Omega$.
  The cut cells ${\cal C}_{\mathbf{l}}$,
   ${\cal C}_{\mathbf{p}}$, and ${\cal C}_{\mathbf{q}}$ are 
   regular, empty, and irregular, respectively.
  The symbol ``$\leftrightarrow$'' indicates cell merging.
  Originally, ${\cal C}_{\mathbf{k}}\in \mathsf{C}_{\Omega}$ is 
   an irregular cell with two small connected components, 
   which are merged to $\mathcal{C}_{\mathbf{j}}$ at line 5
   and $\mathcal{C}_{\mathbf{i}}$ at line 9, respectively.
   Then ${\cal C}_{\mathbf{k}}$ is removed from
   $\mathsf{C}_{\epsilon}^h(\Omega)$
   and its type changed from ``irregular'' to ``empty.''
   The type of ${\cal C}_{\mathbf{i}}$ is changed
   from ``regular'' to ``irregular''
   whereas the type of ${\cal C}_{\mathbf{j}}$
   remains ``irregular.''
 }
  \label{fig:gridsExample}
\end{figure}

In \Cref{alg:merging},
 we first initialize $\mathsf{C}_{\epsilon}^h$
 with $\mathsf{C}_{\Omega}$ in (\ref{eq:setOfCutCells}) in line 1.
For each cut cell ${\cal C}_{\mathbf{i}}\in \mathsf{C}_{\epsilon}^h$
 with multiple components,
 we identify ${\cal C}_{\mathbf{i}}^m$ as
 a component of ${\cal C}_{\mathbf{i}}$ with the maximum volume, %(in line 3)
 set ${\cal C}_{\mathbf{i}}={\cal C}_{\mathbf{i}}^m$, 
 and merge any other component to a neighboring component
 ${\cal C}_{\mathbf{j}}^{i}$
 so that the volume fraction of the merged component
 is closest to 1; see line 5 and (\ref{eq:neighborCutCellComponents}). 
When the grid size $h$ is too large, %not sufficiently small,
 the geometry of the boundary may not be well resolved
 so that at the exit of the double loop in lines 2--7
 there may still exist multiple cut cells associated with a single
 control volume.
However,
 so long as $h$ is sufficiently small,
 the loop in lines 2--7 would leave each cut cell with only one component.
Finally in lines 8--11, 
 each small cut cell ${\cal C}_{\ibold}$
 is merged to a neighboring cell
 and removed from the set of cut cells.
 
An example of \Cref{alg:merging} is shown in \Cref{fig:gridsExample}.
After line 1,
 the cut cell $\mathcal{C}_{\mathbf{k}}\in \mathsf{C}_{\Omega}$
 has two connected components: 
 $\mathcal{C}_{\mathbf{k}}^1$ on the left has a larger volume
 than $\mathcal{C}_{\mathbf{k}}^2$ on the right. 
During the double loop in lines 2--7,
 $\mathcal{C}_{\mathbf{k}}$ is set to $\mathcal{C}_{\mathbf{k}}^1$ 
 and $\mathcal{C}_{\mathbf{k}}^2$ is merged with 
 $\mathcal{C}_{\mathbf{j}}^1$. 
During the loop in lines 8--11,
 all cut cells with small volume fractions are merged
 with a neighboring cut cell;
 in particular, 
 $\mathcal{C}_{\mathbf{k}}$ is merged with $\mathcal{C}_{\mathbf{i}}$.

%%% Local Variables:
%%% mode: latex
%%% TeX-master: "../ellipticFV2D"
%%% End:

\section{Discretizing equation (\ref{eq:ccEllipticEq})
  into a linear system of cell averages}
\label{sec:discretization}

The \emph{cell average of a scalar function}
$\phi: \overline{\Omega}\rightarrow \mathbb{R}$
over a cut cell $\calC_{\ibold}\in \mathsf{C}_{\epsilon}^h(\Omega)$
is defined as
\begin{equation}
  \label{eq:cellAvg}
  \brk{\phi}_{\bmi} :=
  \frac {1} {\lVert \calC_\bmi \rVert}
  \int_{\calC_{\bmi}} \phi(\bmx) \ \rmd \bmx,
\end{equation}
where $\lVert \calC_\bmi \rVert$ is the \emph{volume} of $\calC_\bmi$;
similarly, the \emph{face average over the cut face} $\calF_{\bmihalf}$
and the \emph{boundary average over the cut boundary}
$\mathcal{B}_\ibold$
are respectively %defined as
\begin{equation}
  \label{eq:faceAvgs}
  \brk{\phi}_{\bmihalf} :=
  \frac {1} {\lVert \calF_{\bmihalf} \rVert}
  \int_{\calF_{\bmihalf}} \phi(\bmx) \ \rmd \bmx\quad
  \text{ and }\quad
  \bbrk{\phi}_{\bmi} := \frac {1} {\Vert \calB_\bmi \Vert}
  \int_{\calB_\bmi} \phi(\bmx) \ \rmd \bmx,
\end{equation}
where $\|\cdot\|$ denotes the length of a cut face or cut boundary
in (\ref{eq:cutFaceAndCutBoundary}). 

The goal of this section is
to \emph{discretize integrals of the elliptic equation
	(\ref{eq:ccEllipticEq}) into a linear system}, 
where the unknowns are the cell averages $\brk{u}_{\ibold}$ %in (\ref{eq:cellAvg})
over the cut cells in (\ref{eq:mergedCutCellSet}).
In \Cref{sec:symmetricFD-cells}, %To this end,
we discretize the operator ${\cal L}$ in (\ref{eq:ccEllipticEq})
on SFV cells where symmetric FV formulas apply.
The discretization of (\ref{eq:ccEllipticEq}) on PLG cells
are elaborated in \Cref{sec:PLGCells}.
The final form of the linear system
is summed up in \Cref{sec:discreteElliptic}.

\subsection{Discretizing (\ref{eq:ccEllipticEq}) on SFV cells}
\label{sec:symmetricFD-cells}

A face $\rmF_{\ibold+\frac{1}{2}\basis^d}$
or $\rmF_{\ibold-\frac{1}{2}\basis^d}$
in (\ref{eq:higherAndLowerFaces})
is said to be \emph{extendable}
if it is entirely contained in $\partial \Omega$.
Write
\begin{equation}
  \label{eq:stencilsLop}
  {\cal S}_{\ibold}^{d,+}:=
  \{{\cal C}_{\ibold-m\basis^d}: m=0,1,2,3\},\quad
  {\cal S}_{\ibold}^{d,-}:=
  \{{\cal C}_{\ibold+m\basis^d}: m=0,1,2,3\}.
\end{equation}
A cut cell
$\calC_{\ibold}\in \mathsf{C}_{\epsilon}^h(\Omega)$
is \emph{extendable in the high direction along the $d$th dimension}
if the face
$\rmF_{\ibold+\frac{1}{2}\basis^d}$ is extendable
and all cut cells in ${\cal S}_{\ibold}^{d,+}$
are regular;
similarly,
$\calC_{\ibold}$ %$\in \mathsf{C}_{\epsilon}(\Omega)$
is \emph{extendable in the low direction along the $d$th dimension}
if the face
$\rmF_{\ibold-\frac{1}{2}\basis^d}$ is extendable
and all cut cells in ${\cal S}_{\ibold}^{d,-}$ are regular.

\begin{figure}
  \centering
  \includegraphics[width = 0.6\linewidth]{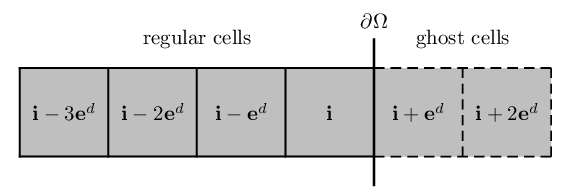}
  \caption{An example of ghost filling near the regular boundary.
    $\rmF_{\ibold+\frac{1}{2}\basis^d}$ is an extendable face
    and ${\cal C}_{\ibold}$ is an extendable cell
    in the high direction along the first dimension.
  }
  \label{fig:GhostCells}
\end{figure}

For an extendable cell,
we append two (regular) ghost cells to each extendable face
in the corresponding direction
and smoothly extend cell averages
of $\phi$
%${\cal S}_{\ibold}^{d,+}$ or ${\cal S}_{\ibold}^{d,-}$
to the ghost cells.
%with the boundary condition on $\partial \Omega$ enforced.
For the example in \Cref{fig:GhostCells},
we use %the following extrapolation formula
\begin{equation}
  \fontsize{7.5}{10}
  % \left\{
  \begin{array}{l}
    \brk{\phi}_{\ibold + \mathbf{e}^d}
     = \frac{1}{12} \left(
      3 \brk{\phi}_{\ibold - 3\mathbf{e}^d}
      -17 \brk{\phi}_{\ibold - 2\mathbf{e}^d}
      + 43 \brk{\phi}_{\ibold - \mathbf{e}^d}
      -77 \brk{\phi}_{\ibold}
      +60 \brk{\phi}_{\ibold+\frac{1}{2}\mathbf{e}^d}\right)
      + O(h^5);                \\
    \brk{\phi}_{\ibold + 2 \mathbf{e}^d}
     = \frac{1}{12} \left(
      27 \brk{\phi}_{\ibold - 3\mathbf{e}^d}
      -145 \brk{\phi}_{\ibold - 2\mathbf{e}^d}
      +335 \brk{\phi}_{\ibold - \mathbf{e}^d}
      -505 \brk{\phi}_{\ibold}
      +75 \brk{\phi}_{\ibold+\frac{1}{2}\mathbf{e}^d}
      \right)+ O(h^5)
  \end{array}
  % \right.
\end{equation}
to fill ghost cells while enforcing the Dirichlet boundary condition
$\brk{\phi}_{\ibold+\frac{1}{2}\mathbf{e}^d}$.
As for the Neumann boundary condition
$\brk{\frac {\partial \phi} {\partial
			\mathbf{n}} }_{\ibold+\frac{1}{2}\mathbf{e}^d}$,
the ghost-filling formulas are
\begin{equation}
  \fontsize{7.5}{10}
  % \left\{
  \begin{array}{l}
    \brk{\phi}_{\ibold + \mathbf{e}^d}
     = \frac{1}{10} \left(
      \brk{\phi}_{\ibold - 3\mathbf{e}^d}
      -5 \brk{\phi}_{\ibold - 2\mathbf{e}^d}
      +9 \brk{\phi}_{\ibold - \mathbf{e}^d}
      +5 \brk{\phi}_{\ibold}
      +12 h \brk{\frac{\partial \phi}{\partial
      \mathbf{n}}}_{\ibold+\frac{1}{2}\mathbf{e}^d}
      \right) + O(h^5),        \\
    \brk{\phi}_{\ibold + 2\mathbf{e}^d}
     = \frac{1}{2} \left(
      3 \brk{\phi}_{\ibold - 3\mathbf{e}^d}
      -15 \brk{\phi}_{\ibold - 2\mathbf{e}^d}
      +29 \brk{\phi}_{\ibold - \mathbf{e}^d}
      -15 \brk{\phi}_{\ibold}
      +12 h \brk{\frac{\partial \phi}{\partial
      \mathbf{n}}}_{\ibold+\frac{1}{2}\mathbf{e}^d}
      \right) + O(h^5).
  \end{array}
  % \right.
\end{equation}
For periodic boundary conditions,
the values of ghost cells are copied
directly from those of the corresponding regular cells
inside the domain $\Omega$.

\begin{definition}
  \label{def:SFVCell}
  % Consider the elliptic operator
  Recall ${\cal L} = a \frac{\partial^2}{\partial x^2}
  + b \frac{\partial^2}{\partial x \partial y}
  + c \frac{\partial^2}{\partial y^2}$
  from (\ref{eq:ccEllipticEq}) and write
  \begin{equation}
    \label{eq:stencilOfIbold}
    \mathcal{S}_{\ibold}:=
    \begin{cases}
      \{ {\cal C}_{\jbold}: \jbold=\ibold+m\ebold^d; d = 1, 2; m = 0, \pm 1, \pm 2 \}
      % \cup_{d}\mathcal{S}_{\ibold}^{d,+}\cup_{d}\mathcal{S}_{\ibold}^{d,-}
      & \text{if } b=0;
      \\
      \{ {\cal C}_{\jbold}: \jbold=\ibold + m_1\ebold^1 + m_2\ebold^2;
      m_1, m_2 = 0, \pm 1, \pm2\}
      & \text{if } b\ne 0.
    \end{cases}
  \end{equation}
  A cut cell $\calC_{\ibold}\in \mathsf{C}_{\epsilon}^h(\Omega)$
  is called an \emph{SFV cell}
  if each cut cell in $\mathcal{S}_{\ibold}$
  is either a regular cell or a ghost cell;
  otherwise it is called a \emph{PLG cell}.
\end{definition}

The case $b=0$ in (\ref{eq:stencilOfIbold}) follows
directly from (\ref{eq:stencilsLop})
and the following symmetric finite-volume discretization
of the first and second derivatives:
\begin{subequations}
  \fontsize{8.5}{10}
  \begin{align}
    \label{eq:stdGrad}
    &\brk{\frac {\partial \phi} {\partial x_d}}_{\bmi}
     = \frac {1} {12 h} \Big(
      \brk{\phi}_{\bmi - 2 \basis^d}
      - 8 \brk{\phi}_{\bmi - \basis^d}
      + 8 \brk{\phi}_{\bmi + \basis^d}
      - \brk{\phi}_{\bmi + 2 \basis^d}
      \Big)
      + \bigO(h^4),
    \\
    \label{eq:stdLap}
    &\brk{\frac {\partial^2 \phi} {\partial x_d^2}}_{\bmi}
      = \frac {1} {12h^2} \Big(
      -\brk{\phi}_{\bmi - 2\basis^d}
      + 16 \brk{\phi}_{\bmi - \basis^d}
      -30 \brk{\phi}_{\bmi}
      + 16 \brk{\phi}_{\bmi + \basis^d}
      -\brk{\phi}_{\bmi + 2\basis^d}
      \Big) + \bigO(h^4);
  \end{align}
\end{subequations}
see \cite{zhang12} for a proof of the fourth-order accuracy.
The case $b\ne 0$ in (\ref{eq:stencilOfIbold}) follows
from the discretization of the cross derivative
$\frac {\partial^2 } {\partial x_i \partial x_j} (i \neq j)$
by applying (\ref{eq:stdGrad})
first in the $i$th direction and then in the $j$th direction.

% The stencils of the Laplacian operator and the cross operator are thus
% \begin{equation}
% \mathcal{S}_\mathrm{Lap}(\ibold) = \{ \ibold+m\ebold^d : m = 0, \pm 1, \pm 2, d = 1, 2.\},
% \end{equation}
% and
% \begin{equation}
% \mathcal{S}_\mathrm{Cro}(\ibold) = \{ \ibold + m_1\ebold^1 + m_2\ebold^2 : m_1, m_2 = \pm 1, \pm2.\}.
% \end{equation}

%\begin{figure}[htb]
% \centering
% \begin{subfigure}[b]{.45\linewidth}
%  \centering
%  \includegraphics[width=1.0\linewidth]{eps/lapStencil-2}
%  \caption{}
%  \label{fig:lapStencil_2}
% \end{subfigure}
% \begin{subfigure}[b]{0.45\linewidth}
%  \centering
%  \includegraphics[width=1.0\linewidth]{eps/lapStencil-3}
%  \caption{}
%  \label{fig:lapStencil_3}
% \end{subfigure}~\caption{The sample matrices of the finite-volume stencils in
% Figure~\ref{fig:exampleStencil}, \ref{fig:lapStencil_2} and~\ref{fig:lapStencil_3}
% have condition numbers $4.48 \times 10^2, 3.14 \times 10^2$ and $3.21 \times 10^2$
%  respectively.
%  All the stencils include the boundary face $\calS_{\bmi}$ as Dirichlet boundary condition.
%  The cell $\calC_{\bmi}$ is marked by a bullet $\bullet$.
% }
% \label{fig:moreExampleStencils}
%\end{figure}

\subsection{Discretizing (\ref{eq:ccEllipticEq}) on PLG cells}
\label{sec:PLGCells}

% This subsection concerns discretizing the elliptic operator $\mathcal{L}$
%  in (\ref{eq:ccEllipticEq}) over PLG cells, % on cells near the irregular interface,
%  to which SFV stencils are not applicable.
In \Cref{sec:PLG}, %,sec:PLG-FV
 we briefly review the PLG algorithm \cite{Zhang:PLG}
 that generates a suitable stencil for each PLG cell.
In \Cref{sec:Discretization-PLG-cells},
 we fit a multivariate polynomial locally
 to discretize $\avg{{\cal L}u}_{\mathbf{i}}$
 as a linear combination of cell averages and boundary averages.
 
\subsubsection{Poised lattice generation (PLG)}
\label{sec:PLG}

% Traditional finite difference (FD) methods
% have limited applications in irregular domains,
% as they often replace the spatial derivatives in partial differential equations
% by one-dimensional FD formulas or their tensor-product counterparts.
% To overcome this limitation,
% the PLG algorithm~\cite{Zhang:PLG} was proposed
% to generate poised lattices in complex geometries.
% Once the interpolation lattices have been generated,
% high-order discretization of the differential operators can be obtained
% via multivariate polynomial fitting.

% Traditional FD methods are not suitable
%  for high-order numerical solutions of PDEs on irregular domains.
% To overcome this limitation via PLG, 
We start with
\begin{definition}[Lagrange interpolation problem (LIP)]
  \label{def:LIP}
  Denote by $\Pi_n^{\Dim}$ the vector space of
  all $\Dim$-variate polynomials of degree no more than $n$
  with real coefficients.
  Given points $\bmx_1, \bmx_2, \cdots, \bmx_N \in \bbR^\Dim$
  and the same number of data $f_1, f_2, \cdots, f_N \in \bbR$,
  the \emph{Lagrange interpolation problem} seeks a polynomial $f \in \Pi_n^\Dim$ such that
  \begin{equation}
    \forall j=1, 2, \cdots, N,\quad
    f(\bmx_j) = f_j, 
    \label{eq:finiteDiffLIP}
  \end{equation}
  where $\Pi_n^\Dim$ is the \emph{interpolation space}
  and $\{\bmx_j\}_{j=1}^N$ are the \emph{interpolation sites}.
  \label{def:finiteDiffLIP}
\end{definition}

The sites $\left\{ \bmx_j \right\}_{j=1}^N$
 are \emph{poised} in $\Pi_n^\Dim$ if,
 for any given data $\left\{ f_j \right\}_{j=1}^N$,
 there exists a unique $f \in \Pi_n^\Dim$ satisfying (\ref{eq:finiteDiffLIP}). 
For a basis $\left\{ \phi_j \right\}$ of $\Pi_n^\Dim$, 
 $\left\{ \bmx_j \right\}_{j=1}^N$ are poised
 if and only if $N = \dim \Pi_n^\Dim = \binom{n+\Dim}{n}$
 and the following \emph{sample matrix} $M\in \mathbb{R}^{N\times N}$
 is non-singular, 
 \begin{equation}
   \label{eq:sampleMatrix}
   \forall j, k = 1,2, \ldots N, \quad
   M(\left\{ \phi_j \right\} ; \left\{ \bmx_k \right\})
   = \Big[ M_{jk} \Big] := \Big[ \phi_j(\bmx_k) \Big].
 \end{equation}

For $\Dim=1$,
 the LIP is unisolvent if and only if its sites are pairwise distinct.
For $\Dim>1$, however, 
 it is difficult to determine 
 whether a set of sites is poised in $\Pi^{\Dim}_n$.
For example,
 the lattice
 $\{(5,0), (-5,0), (0,5), (0,-5), (4,3), (-3,4)\}$
 is not poised in $\Pi^2_2=\Span(1,x,y,x^2,y^2,xy)$
 because the corresponding sample matrix in (\ref{eq:sampleMatrix}) is singular. 
As the core difficulty of multivariate interpolation, 
\emph{the poisedness of interpolation sites in multiple dimensions
  depends on their geometric configuration}.
 
\begin{definition}[PLG in $\mathbb{Z}^\Dim$~\cite{Zhang:PLG}]
  \label{def:PLG}
  Given a finite set  $K\subset \mathbb{Z}^\text{D}$ of feasible nodes,
  a starting point $\mathbf{q}\in K$, and a degree $n\in \mathbb{Z}^+$,
  the \emph{problem of poised lattice generation} is to
  choose a lattice $\mathcal{T}\subset K$ such that
  $\mathcal{T}$ is poised in $\Pi_n^\Dim$ and $\#\mathcal{T} = \dim \Pi_n^\Dim$.
\end{definition}

In \Cref{def:PLG},
 $\mathbb{Z}^{\Dim}$ captures the rectangular structure of FD grids 
 while $K$ reflects the physics
 of the spatial operator being discretized.
For example,
 to discretize an advection operator,
 we set $K$ to be a lopsided box with respect to $\mathbf{q}$
 because most information comes from the upwind direction;
 see \cite[Fig. 5]{Zhang:PLG}.
Considering the isotropy of diffusion
 for the elliptic operator ${\cal L}$ in (\ref{eq:ccEllipticEq}), 
 we set $K$ in this work to be a box centered at $\mathbf{q}$
 as much as possible.
 
Via a fusion of approximation theory,
 group theory, and search algorithms in artificial intelligence, 
 we solved the PLG problem in \Cref{def:PLG}
 by a novel and efficient PLG algorithm \cite{Zhang:PLG},
 which is applied in this work
 to discretize $\avg{{\cal L}u}_{\mathbf{i}}$ with $K$. 

% %============================================================
% \subsubsection{PLG in FV method}
% \label{sec:PLG-FV}
% We apply PLG algorithm to the FV scheme.
% In constrast with the FD scheme,
% we replace the interpolation sites from grid points to cell-averages over the cut cells
% and add one more site on the boundary face to enforce the boundary condition.
\subsubsection{Approximating $\avg{{\cal L}u}_{\mathbf{i}}$
%  for a PLG cell ${\cal C}_{\ibold}$
  with a linear combination of cell averages (and a boundary average)}
\label{sec:Discretization-PLG-cells}

% We need to express $\avg{{\cal L}u}_{\ibold}$
%  as a linear combination of cell averages.
Let $\calS_{\mathrm{PLG}}(\ibold) =
\left\{\calC_{\bmj_1}, \calC_{\bmj_2}, \cdots, \calC_{\bmj_N}\right\}$
denote the poised lattice generated by the PLG algorithm
where $N=\dim{\Pi_n^\Dim}$. 
%The stencils are based on the transformation of the principal lattice.
% and the boundary average.
As shown in \Cref{fig:exampleStencil},
 the \emph{stencil for discretizing $\calL$ over a PLG cell} $\calC_\bmi$ is 
\begin{equation}
   \label{eq:PLGstencil}
   \calX(\bmi) =
   \begin{cases}
     \calS_{\mathrm{PLG}}(\ibold) 
     & \text{if ${\cal C}_{\mathbf{i}}$ is a regular PLG cell}; 
     \\
     \calS_{\mathrm{PLG}}(\ibold) \cup
     \left\{\mathcal{B}_\bmi \right\}
     & \text{if ${\cal C}_{\mathbf{i}}$ is an irregular PLG cell}.
   \end{cases}
\end{equation}
% Traditional methods usually select larger stencil or keep adding nearby cells
% to ensure the solvability of the linear system.
% But blindly expanding the stencil may lead to a large number of redundant points,
% deteriorating computational efficiency.
% This is precisely an advantage of our PLG algorithm.

\begin{figure}
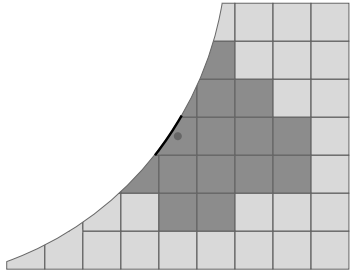

  \centering
  \includestandalone[width=0.36\textwidth]{tikz/Stencil}
  \caption{An example of the stencil for multivariate polynomial fitting
    in the FV formulation for $\Dim = 2$ and $n=4$.
    ``$\bullet$'' marks the starting point $\mathbf{q}=\mathbf{i}$, 
    the cells with dark shades constitute
    $\calS_{\mathrm{PLG}}$ in (\ref{eq:PLGstencil}), 
    and the thick curve segment represents
    the cut boundary $\mathcal{B}_\bmi$.
    }
  \label{fig:exampleStencil}
\end{figure}

% In general cases it is still unclear that
% whether a direct application of the PLG algorithm
% will generate poised stencils in finite volume formulation.
% Finding a strict proof of poisedness %of the generated stencils % in the finite volume sense
% will be a topic of our future research.
% Nevertheless, the poisedness in finite volume formulation is supported
% by the extensive numerical evidences.
% For example, the condition numbers of the $(\Dim,n) = (2,4)$ lattices used in this paper
% generally do not exceed $10^4$ in all problems of interest.

% We have assumed that $\calX$ contains the boundary face $\mathcal{B}_\bmi$
% and the treatment of the other case is similar.
% In the following,
% we reserve the symbol $n$ for the degree of polynomial fitting
% (and of the lattice $\calX$),
% %$N$ for $\dim{\Pi_n^D}$, 
% the index $k = 1,2, \ldots ,N$ for the interpolation sites, % in $\calX$,
% and the index $j = 1,2, \ldots ,N$ for the basis of $\Pi_n^\Dim$.

%For a multiindex $\bmi \in \bbZ^\Dim$,
Given the cell averages and the boundary average
\begin{equation}
  \label{eq:cellAvgAndBdryAvg}
  \overline{\mathbf{u}} = \left[ \brk{u}_{\bmj_1}, \cdots , \brk{u}_{\bmj_N},
    \bbrk{\calN u}_{\bmi} \right]^{\top} \in \bbR^{N+1},
  % \begin{aligned}
  %   \hat{u} & = \left[ u_1, \cdots, u_N, u_{\mathrm{b}} \right]^{\top}                                   \\
  %   & = \left[ \brk{u}_{\bmj_1}, \cdots , \brk{u}_{\bmj_N}, \bbrk{\calN u}_{\bmi} \right]^{\top}
  %   \in \bbR^{N+1},
  % \end{aligned}
\end{equation}
the goal is to determine a vector of coefficients
$\boldsymbol{\beta} = \left[ \beta_1, \cdots, \beta_N, \beta_{\mathrm{b}}\right]^{\top}$
such that the linear combination $\boldsymbol{\beta}^{\top}
\overline{\mathbf{u}}$
is an $(n-1)$th-order approximation of $\avg{{\cal L} u}_{\bmi}$,
\begin{equation}
  \label{eq:goalOfApproximation}
  \forall u \in {\cal C}^{n+1}(\bbR^\Dim, \bbR), \quad 
  \boldsymbol{\beta}^{\top} \overline{\mathbf{u}} = \brk{\calL u}_\bmi
  + \bigO(h^{n-1}), 
  % \begin{aligned}
  %   \brk{\calL u}_\bmi & = \sum_{k=1}^N \beta_k u_k
  %   + \beta_{\mathrm{b}} u_{\mathrm{b}} + \bigO(h^{n-1})          \\
  %   & = \hat{\beta}^{\top} \hat{u} + \bigO(h^{n-1})
  % \end{aligned}
\end{equation}
where $u$ can be approximated 
to the $(n+1)$th-order accuracy 
by a complete $\Dim$-variate polynomial with total degree $n$. 
Then $\bigO(h^{n-1})$ follows
from second derivatives in ${\cal L}$.

The equations on $\boldsymbol{\beta}$ are obtained 
 via a restricted version of
 (\ref{eq:goalOfApproximation}), 
\begin{equation}
  \forall u \in \Pi_n^\Dim, \quad
  \brk{\calL u}_{\bmi} = \sum\nolimits_{k=1}^N \beta_k \avg{u}_{\mathbf{j}_k}
  + \beta_{\mathrm{b}} \bbrk{\calN u}_{\bmi}, 
\end{equation}
which is equivalent to
$\brk{\calL \phi_j}_{\bmi} = \sum\nolimits_{k=1}^N \beta_k \brk{\phi_j}_{\bmj_k}
  + \beta_{\mathrm{b}} \bbrk{\mathcal{N}\phi_j}_{\bmi}$
for a basis $\left\{ \phi_j \right\}_{j=1}^N$ of $\Pi_n^\Dim$.
% \begin{equation}
%   \label{eq:coefLinearSystem}
%   \forall j = 1,2, \ldots, N, \quad
%   \brk{\calL \phi_j}_{\bmi} = \sum\nolimits_{k=1}^N \beta_k \brk{\phi_j}_{\bmj_k}
%   + \beta_{\mathrm{b}} \bbrk{\mathcal{N}\phi_j}_{\bmi}. 
% \end{equation}
These equations form a linear system
\begin{equation}
  \label{eq:linearSystemOfCoefs}
  M \boldsymbol{\beta} = \overline{\boldsymbol{\phi}},
\end{equation}
where $\overline{\boldsymbol{\phi}}= \left(
  \brk{\calL \phi_1}_{\bmi},
  \brk{\calL \phi_2}_{\bmi},
  \ldots, \brk{\calL \phi_N}_{\bmi} \right)^{\top}\in \bbR^N$; 
for an irregular PLG cell ${\cal C}_{\mathbf{i}}$, we have 
\begin{equation}
  \label{eq:detailedSampleMatrix}
  M  =
      \begin{bmatrix}
        % M\left( \left\{ \phi_j \right\} ; \left\{ \calC_{\bmj_k} \right\} \right) \Big\vert~\bmv
        \brk{\phi_1}_{\jbold_1} & \brk{\phi_1}_{\jbold_2} & \cdots & \brk{\phi_1}_{\jbold_N} & \bbrk{\calN \phi_1}_{\bmi} \\
        \brk{\phi_2}_{\jbold_1} & \brk{\phi_2}_{\jbold_2} & \cdots & \brk{\phi_2}_{\jbold_N} & \bbrk{\calN \phi_2}_{\bmi} \\
        \vdots                  & \vdots                  & \ddots & \vdots                  & \vdots                     \\
        \brk{\phi_N}_{\jbold_1} & \brk{\phi_N}_{\jbold_2} & \cdots & \brk{\phi_N}_{\jbold_N} & \bbrk{\calN \phi_N}_{\bmi} \\
      \end{bmatrix}
  \in \bbR^{N \times (N+1)}.
  % \bmv &= \left[ \bbrk{\calN \phi_1}_{\bmi} & \cdots & \bbrk{\calN \phi_N}_{\bmi} \right]^{\top} \in \bbR^N, \\
  % 
  % \left[ \brk{\calL \phi_1}_{\bmi} & \brk{\calL \phi_2}_{\bmi} & \cdots & \brk{\calL \phi_N}_{\bmi} \right]^{\top} \in \bbR^N. 
\end{equation}
For a regular PLG cell ${\cal C}_{\mathbf{i}}$, 
 the last column of $M$ in (\ref{eq:detailedSampleMatrix}) is dropped, 
 so are the last elements of 
 $\boldsymbol{\beta}$ and $\overline{\boldsymbol{\phi}}$.
 
We calculate the integrals on regular cells 
 by six-order recursive Gauss formulas.
In contrast, the integral of a scalar function
 over an irregular cut cell 
 is first converted by Green's theorem
 to another integral on the boundary Jordan curve 
 and then approximated by sixth-order Gauss formulas;
 see \cite{zhang13:_highl_lagran} for more details.
Together with the explicit approximation of the boundary Jordan curve with cubic splines,
 this integral formulation makes our method robust
 in that its fourth-order accuracy holds even at the presence of kinks
 on the domain boundary.
 
If (\ref{eq:linearSystemOfCoefs}) is under-determined,
 we solve a constrained optimization problem
 %a weighted minimum-norm solution via
 \begin{equation}
   \label{eq:optimizationProb}
   \min_{\boldsymbol{\beta} \in \bbR^{N+1}} \left\Vert \boldsymbol{\beta} \right\Vert_{W^{-1}}
   \quad
   \mathrm{s.t.} \quad M \boldsymbol{\beta} = \overline{\boldsymbol{\phi}}, 
 \end{equation}
where the square matrix $W$ is symmetric positive definite,
the \emph{$W$-inner product of two vectors} % $w, v$
is $\langle \mathbf{w}, \mathbf{v} \rangle_W := \mathbf{w}^{\top} W \mathbf{v}$, 
and the \emph{$W$-norm of a vector} is
$\Vert \mathbf{v} \Vert_W := \sqrt{\langle \mathbf{v}, \mathbf{v} \rangle_W}$.
 
Since $M$ has full row rank,
%(see the last two paragraphs of Section~\ref{sec:finiteVolumePLG}),
it follows from \Cref{lem:leastSquares} that
(\ref{eq:optimizationProb}) is solved by
\begin{equation}
  \label{eq:derivedCoefAgain}
  \boldsymbol{\beta}_{\min} = W M^{\top} \left( M W M^{\top} \right)^{-1} \overline{\boldsymbol{\phi}}.
\end{equation}

\begin{lemma}
  \label{lem:leastSquares}
  Let $A \in \bbR^{m \times N}$ be a matrix with full column rank
  and $W \in \bbR^{m \times m}$ be a symmetric positive definite matrix.
  For any $\mathbf{v} \in \bbR^N$, the optimization problem of 
  $\min_{\mathbf{x} \in \bbR^m} \left\Vert \mathbf{x} \right\Vert_{W^{-1}}$
  under the constraint $A^{\top} \mathbf{x}=\mathbf{v}$
  admits a unique solution \mbox{$\mathbf{x}_{\min} = W A \left(A^{\top} W A\right)^{-1} \mathbf{v}$}.
\end{lemma}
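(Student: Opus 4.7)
The plan is to treat this as a strictly convex quadratic program with linear equality constraints and solve it by the classical Lagrange multiplier argument. First, I would rewrite the objective as $\frac{1}{2}\mathbf{x}^\top W^{-1}\mathbf{x}$, which is a strictly convex function of $\mathbf{x}$ because $W$ is symmetric positive definite and hence so is $W^{-1}$. The constraint set $\{\mathbf{x}\in\bbR^m : A^\top \mathbf{x} = \mathbf{v}\}$ is a (nonempty) affine subspace, so strict convexity immediately guarantees that if a minimizer exists, it is unique.

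Second, I would form the Lagrangian
\begin{equation*}
L(\mathbf{x},\boldsymbol{\lambda}) = \tfrac{1}{2}\mathbf{x}^\top W^{-1}\mathbf{x} - \boldsymbol{\lambda}^\top(A^\top \mathbf{x} - \mathbf{v})
\end{equation*}
and take the stationarity condition $\nabla_{\mathbf{x}} L = W^{-1}\mathbf{x} - A\boldsymbol{\lambda} = \mathbf{0}$, giving $\mathbf{x} = W A \boldsymbol{\lambda}$. Substituting into the constraint $A^\top \mathbf{x} = \mathbf{v}$ produces the normal-equation-like system $(A^\top W A)\boldsymbol{\lambda} = \mathbf{v}$.

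Third, I need to show that $A^\top W A$ is invertible, which is the only real technical step. This is where both hypotheses are used: since $W$ is symmetric positive definite, it admits a Cholesky factor $W = L L^\top$, and for any $\mathbf{y}\in\bbR^N$,
\begin{equation*}
\mathbf{y}^\top A^\top W A \mathbf{y} = \|L^\top A \mathbf{y}\|_2^2 \ge 0,
\end{equation*}
with equality forcing $L^\top A \mathbf{y} = \mathbf{0}$, hence $A\mathbf{y} = \mathbf{0}$ (as $L^\top$ is nonsingular), and finally $\mathbf{y} = \mathbf{0}$ by the full column rank of $A$. Thus $A^\top W A$ is SPD and invertible, giving $\boldsymbol{\lambda} = (A^\top W A)^{-1}\mathbf{v}$ and the candidate $\mathbf{x}_{\min} = W A (A^\top W A)^{-1}\mathbf{v}$.

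Finally, I would verify that this candidate is genuinely feasible ($A^\top \mathbf{x}_{\min} = \mathbf{v}$ by direct computation) and globally optimal: since the problem is a strictly convex QP over an affine set, the first-order KKT conditions are necessary and sufficient, so $\mathbf{x}_{\min}$ is the unique minimizer. I expect no real obstacle beyond the invertibility step above; the rest is bookkeeping with the Lagrangian.
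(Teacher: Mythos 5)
Your proof is correct. Note, however, that the paper does not actually prove this lemma at all; its ``proof'' is a citation to Golub and Van Loan, so there is no in-paper argument to compare against. Your Lagrangian/KKT derivation is the standard, complete route for this equality-constrained strictly convex quadratic program, and it is essentially the argument found in the cited reference: strict convexity of $\tfrac12\mathbf{x}^{\top}W^{-1}\mathbf{x}$ gives uniqueness, stationarity gives $\mathbf{x}=WA\boldsymbol{\lambda}$, and the full-column-rank hypothesis combined with positive definiteness of $W$ makes $A^{\top}WA$ invertible via the Cholesky factorization, after which sufficiency of the KKT conditions for convex problems closes the argument. The only point you assert without justification is that the feasible set is nonempty; this follows in one line from the fact that full column rank of $A$ makes $A^{\top}$ surjective, so $A^{\top}\mathbf{x}=\mathbf{v}$ is solvable for every $\mathbf{v}\in\bbR^{N}$. (In fact your explicit candidate $\mathbf{x}_{\min}$ already witnesses feasibility, so this is harmless.) In the context of the paper, where this lemma is applied with $A=M^{\top}$ to obtain the coefficient vector in the weighted least-squares fit on PLG cells, your self-contained proof is a strict improvement over the bare citation.
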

\begin{proof}
  See~\cite[\S 5.3, \S 5.6, \S 6.1]{Golub}.
\end{proof}

% \begin{lemma}
%   \label{lem:leastSquares}
%   Let $A \in \bbR^{m \times n}$ be a matrix with full column rank,  and let %$b \in \bbR^m$, 
%   $W \in \bbR^{m \times m}$ be a symmetric positive definite matrix.
%   % Denote by $W^{1/2}$ the square root of $W$. 
%   \begin{enumerate}[label={(\roman*)}, nosep, leftmargin=*]
%   \item For any $b \in \bbR^m$, the optimization problem
%     % If $A$ has full column rank, then the optimization problem
%     \begin{equation}
%       \min_{x \in \bbR^n} \left\Vert Ax - b \right\Vert_{W}
%     \end{equation}
%     admits a unique solution $x_{\mathrm{LS}} = \left(A^{\top} W A\right)^{-1} A^{\top} W b$.
%   \item For any $d \in \bbR^n$, the constrained optimization problem
%     % If $A$ has full row rank, then the constrained optimization problem
%     \begin{equation}
%       \min_{x \in \bbR^m} \left\Vert x \right\Vert_{W^{-1}} \quad \mathrm{s.t.} \quad A^{\top} x=d
%     \end{equation}
%     admits a unique solution $x_{\mathrm{NM}} = W A \left(A^{\top} W A\right)^{-1} d$.
%   \end{enumerate}
% \end{lemma}
% \begin{proof}
%   See~\cite[\S 5.3, \S 5.6, \S 6.1]{Golub}.
% \end{proof}

It is reasonable to demand
 that a cut cell closer to ${\cal C}_{\mathbf{i}}$
 has a greater influence upon the linear system
 than a cut cell away from ${\cal C}_{\mathbf{i}}$. 
To this end, %For simplicity,
 we set 
\begin{subequations}
  \label{eq:weightDef}
  \begin{align}
    W^{-1} &= \diag\left( w_1, \cdots, w_N, w_{\mathrm{b}} \right);
    \\
    w_k &= \max \left\{ \Vert \bmj_k - \bmi \Vert_2, w_{\min} \right\}; \quad
          w_{\mathrm{b}} = w_{\min}=0.5, 
  \end{align}
\end{subequations}
where the nonzero value of $w_{\min}$ prevents $\frac{1}{w_k}$ from being too large.

To maintain good conditioning,
 the basis $\left\{ \phi_j \right\}_{j=1}^N$ of $\Pi_n^\Dim$
 is set to
 \begin{equation}
   \begin{array}{l}
     \Phi_n^\Dim(h;\bmp) = \left\{ \left( \frac{\bmx - \bmp}{h}
     \right)^{\bmalpha}:
     \bmalpha \in \{0,1,\ldots, n\}^{\Dim} \text{ and }
     
     \|\bmalpha \|_1 \in [0, n]\right\},
   \end{array}
\end{equation}
where $\bmp \in \bbR^\Dim$ is the center of ${\cal C}_{\mathbf{i}}$.
% (or simply the center of the bounding box)
%and $\boldsymbol{\alpha}$ a multiple index.

% \begin{definition}
%   For a symmetric positive definite matrix $W \in \bbR^{n \times n}$,
%   the \emph{$W$-inner product} is
%   \begin{equation}
%     \langle u, v \rangle_W := u^{\top} W v
%   \end{equation}
%   and the \emph{$W$-norm} is
%   \begin{equation}
%     \Vert u \Vert_W := \langle u, u \rangle_W^{\frac{1}{2}}
%   \end{equation}
%   for $u, v \in \bbR^n$.
% \end{definition}

%============================================================
\subsection{The linear system
	as the discrete elliptic problems}
\label{sec:discreteElliptic}

Given $\Omega$, $\Omega_R$, $h$, and $\epsilon$,
 \Cref{alg:merging} uniquely
 determines the set $\mathsf{C}^h_{\epsilon}$ of cut cells
 in (\ref{eq:mergedCutCellSet}).
For SFV cells,
 the symmetric FV formulas in \Cref{sec:symmetricFD-cells}
 are employed to discretize the integral of (\ref{eq:ccEllipticEq}).
For each PLG cell ${\cal C}_{\mathbf{i}}$,
 the vector $\overline{\boldsymbol{\phi}}$
 and the matrices $M$ and $W$
 in (\ref{eq:linearSystemOfCoefs}) and (\ref{eq:weightDef})
% can be calculated from (\ref{eq:ccEllipticEq})
 yield $\boldsymbol{\beta}_{\min}$ in (\ref{eq:derivedCoefAgain}),
 which, together with (\ref{eq:goalOfApproximation}),
 implies that
 $\boldsymbol{\beta}_{\min}^{\top} \overline{\mathbf{u}}$
 is an $(n-1)$th-order approximation of
 the integral $\brk{\calL u}_\bmi$ 
 of (\ref{eq:ccEllipticEq}) over ${\cal C}_{\mathbf{i}}$. 
Combine the two cases
 and we have a linear system of the form
\begin{equation}
  \label{eq:PoissonLinearDiscretization}
  A \mathbf{u} = \mathbf{b}:=\mathbf{f} - N \mathbf{g},
\end{equation}
where $\mathbf{f}$ is a vector of cell averages
of the right-hand side (RHS) function $f$ in (\ref{eq:ccEllipticEq})
and the matrices $A$ and $N$ discretize
the elliptic operator ${\cal L}$
and the boundary operator ${\cal N}$, respectively.
Similar to $\overline{\mathbf{u}}$ in (\ref{eq:cellAvgAndBdryAvg}),
$\mathbf{u}$ and $\mathbf{g}$ are the vector of cell averages
and the vector of boundary averages, respectively.
The structure of $A$
 is better revealed by the following block form
 that is equivalent to (\ref{eq:PoissonLinearDiscretization}), 
\begin{equation}
  \label{eq:PoissonSplitDiscretization}
  \begin{bmatrix}
    A_{11} & A_{12} \\
    A_{21} & A_{22}
  \end{bmatrix}
  \begin{bmatrix}
    \mathbf{u}_1 \\
    \mathbf{u}_2
  \end{bmatrix}
  =
  \begin{bmatrix}
    \mathbf{b}_1 \\
    \mathbf{b}_2
  \end{bmatrix}, 
\end{equation}
where $\mathbf{u}_1$ and $\mathbf{u}_2$
contain cell averages of $u$ over SFV cells and PLG cells, 
respectively. 
The eigenvalues of $A_{11}$ have nonnegative real parts. 
In contrast, each of $A_{12}$, $A_{21}$, and $A_{22}$
 is asymmetric and indefinite;
 all we know about them is their sparsity.

The \emph{error} and the \emph{residual} of
 an approximate solution $\tilde{\mathbf{u}}\approx \mathbf{u}$
 of (\ref{eq:PoissonLinearDiscretization}) 
 are respectively defined as
\begin{equation}
  \label{eq:errorAndResidual}
  \mathbf{e}(\tilde{\mathbf{u}})
  := \mathbf{u} - \tilde{\mathbf{u}},\quad
  \mathbf{r}(\tilde{\mathbf{u}})
  := \mathbf{b} - A\tilde{\mathbf{u}}. 
\end{equation}
Then (\ref{eq:PoissonLinearDiscretization}) can be rewritten
 as the equivalent residual equation 
% \begin{equation}
%   \label{eq:PoissonResidualForm}
$A \mathbf{e} = \mathbf{r} = \mathbf{b} - A \tilde{\mathbf{u}}$, 
%\end{equation}
which is conducive to the design
 and exposition of multigrid methods.

% Given an initial guess $\mathbf{u}^{(i)}$ of $\mathbf{u}$,
%  we compute its residual $\mathbf{r}^{(i)}$ in (\ref{eq:errorAndResidual}),
%  solve the residual equation in (\ref{eq:PoissonSplitDiscretization})
%  to get its error $\mathbf{e}^{(i)}$,
%  and finally obtain a more accurate approximate
%  by $\mathbf{u}^{(i+1)}=\mathbf{u}^{(i)}+\mathbf{e}^{(i)}$. 
% In comparison to directly solving (\ref{eq:PoissonLinearDiscretization}),
%  this process is more amenable to the design
%  and exposition of geometric multigrid methods.

%%% Local Variables:
%%% mode: latex
%%% TeX-master: "../ellipticFV2D"
%%% End:

% LocalWords:  discretization PLG discretize multigrid

\section{The cut-cell geometric multigrid method}
\label{sec:multigrid}

The PLG discretization results in 
 the indefiniteness of the block matrix $A_{22}$
 in (\ref{eq:PoissonSplitDiscretization}), 
 and thus prohibits a direct application
 of traditional geometric multigrid methods. 
 % to solving %the linear system 
 % (\ref{eq:PoissonSplitDiscretization}).
To cope with this difficulty,
 we give a total ordering to the set of PLG cells
 and prove in \Cref{sec:order_L22}
 that the LU factorization of the corresponding subblock $A_{22}$
 has the optimal complexity of $O(h^{-1})$. 
Then 
 we design a fixed-point iteration in \Cref{sec:smoother} 
 as a block smoother of (\ref{eq:PoissonSplitDiscretization})
 and assemble these components
 into a cut-cell V-cycle in \Cref{sec:Vcycles}. 
In \Cref{sec:complexity},
 we assemble these components
 to propose a cut-cell full multigrid (FMG) cycle
% to further improve the effectiveness of the cut-cell V-cycle, 
 as a new cut-cell geometric multigrid method
 that solves the block linear system (\ref{eq:PoissonSplitDiscretization})
 with the optimal complexity of $O(h^{-2})$.

\subsection{An optimal LU factorization of $A_{22}$
  in (\ref{eq:PoissonSplitDiscretization})}
\label{sec:order_L22}

The \emph{bandwidth of a square matrix $A$}
 is the minimum nonnegative integer $k$
 such that $|i-j|>k$ implies $a_{i,j}=0$.
The bandwidth of $A_{22}$ in (\ref{eq:PoissonSplitDiscretization})
 is greatly affected by the ordering of
 the unknowns in $\mathbf{u}_2$,
 i.e., the ordering of the PLG cells.
By the unique representation of Yin sets in (\ref{eq:uniqueRepOfYinSets}), 
 it suffices to define the ordering for PLG cells
 close to a single Jordan curve $\gamma : [0, 1] \to \mathbb{R}^2$
 where $\gamma(0)=\gamma(1)$
 and $\gamma|_{[0,1)}$ is a continuous injection.

\begin{definition}
  \label{def:linearOrderingOfPLGcells}
  For a Jordan curve $\gamma\subset \partial \Omega$, 
  the \emph{total ordering} on the multi-index set
  $\mathcal{I}_{\mathrm{PLG},\gamma}:=\{\ibold:
  {\cal C}_{\ibold} \text{ is a PLG cell near }\gamma\}$ 
  is given by 
  $\ibold \leq \jbold$ if and only if $s(\ibold)\leq s(\jbold)$,
  where $s(\ibold)$ is the parameter of the point $\gamma(s(\ibold))$ on $\gamma$
  that is closest to the center of the cell $\mathbf{C}_{\ibold}$
  in (\ref{eq:controlVolume}). 
\end{definition}
             
\begin{figure}
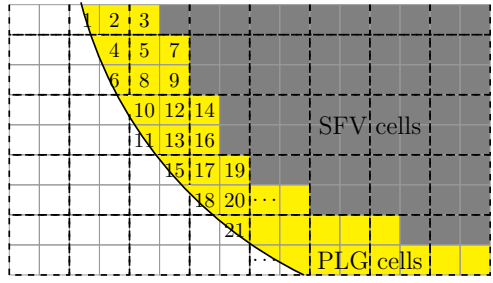

  \centering
  \includestandalone[width=0.5\textwidth]{tikz/OrderOnBdry}
  \caption{Illustrating the total ordering of PLG cells
    in \Cref{def:linearOrderingOfPLGcells}. 
    The SFV and PLG cells are shaded in gray and in yellow,
    respectively.
%    The numbers represent the 
    The dashed boxes represent the coarse cells.
    Fine SFV cells
    (such as the two adjacent to \#14 and \#16)
    may be covered by a coarse PLG cell.}
  \label{fig:OrderOnBdry}
\end{figure}

See Figure~\ref{fig:OrderOnBdry} for an illustration
 of \Cref{def:linearOrderingOfPLGcells}. 
When $\partial\Omega$ consists of multiple Jordan curves,
 we order PLG cells near each Jordan curve consecutively. 
% For ease of exposition, hereafter we assume without loss of generality
%  that $\partial \Omega$ consists of only a single Jordan curve. 

\begin{lemma}
  \label{lem:bandWidth}
  Suppose $\partial \Omega$ consists of only one single Jordan curve
  and the grid size $h$ is sufficiently small to resolve $\partial \Omega$.
  Then the total ordering in \Cref{def:linearOrderingOfPLGcells}
  and the PLG discretization in \Cref{sec:PLGCells} with $n=4$
  yield
  \begin{equation}
    \label{eq:A22}
    A_{22} = A_{22,c} +
    \begin{bmatrix}
      \mathbf{0} & A_{22,u} \\
      \mathbf{0} & \mathbf{0}
    \end{bmatrix}
    +
    \begin{bmatrix}
      \mathbf{0}        & \mathbf{0} \\
      A_{22,l} & \mathbf{0}
    \end{bmatrix},
  \end{equation}
  where $\mathbf{0}$ represents a block matrix whose elements are all zero; 
  the bandwidth of $A_{22,c}$ is at most 17,
  so are the dimensions of the square blocks $A_{22,u}$ and $A_{22,l}$.
\end{lemma}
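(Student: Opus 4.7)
The plan is to translate the locality of the PLG discretization into a bandwidth bound for $A_{22}$, using the arc-length ordering of \Cref{def:linearOrderingOfPLGcells} and treating the seam $\gamma(0) = \gamma(1)$ as the only obstruction to a clean banded structure.

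First, I would bound the index-space extent of each PLG stencil. For $n = 4$ and $\Dim = 2$, the interpolation space $\Pi_4^2$ has dimension $N = \binom{6}{2} = 15$, and by the PLG algorithm of \Cref{sec:PLG} the lattice $\mathcal{S}_{\mathrm{PLG}}(\mathbf{i})$ is drawn from a small box $K$ centered at $\mathbf{q} = \mathbf{i}$. Consequently, for every $\mathcal{C}_{\mathbf{j}} \in \mathcal{S}_{\mathrm{PLG}}(\mathbf{i})$ the displacement $\|\mathbf{j} - \mathbf{i}\|_\infty$ is bounded by a small constant $L$ independent of $h$, so nonzero entries of the row of $A_{22}$ indexed by $\mathbf{i}$ can come only from PLG cells lying inside a $(2L+1)\times(2L+1)$ index-space neighborhood of $\mathbf{i}$.

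Second, I would count the PLG cells that fit inside such a local neighborhood. Since the PLG cells form an essentially one-cell-thick band following $\gamma$, and $h$ is assumed small enough to resolve $\partial\Omega$, the curve has bounded complexity within the neighborhood and the number of PLG cells it can contain is at most $17$. The cell-merging guarantees of \Cref{alg:merging} are needed here to rule out pathological configurations---e.g., nearly touching branches of $\gamma$---that would otherwise inflate this count. Third, I would use the arc-length ordering to link geometric proximity to ordering proximity: as $h \to 0$ the parameters $\{s(\mathbf{i})\}$ of PLG cells along $\gamma$ form a monotone sequence on $[0,1]$, so two PLG cells sharing an index-space neighborhood must be close in the ordering, \emph{except} when they lie on opposite sides of the seam where the parameter wraps from near $1$ back to near $0$. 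Each nonzero entry $A_{22}[i, j]$ therefore falls into exactly one of two classes: either $|i - j| \le 17$, contributing to the central band $A_{22,c}$; or $(i, j)$ straddles the seam, contributing to $A_{22,u}$ (with $i$ near the start and $j$ near the end of the ordering) or to $A_{22,l}$ (the symmetric case). Only the first and last $17$ PLG cells in the ordering can participate in seam-straddling pairs, so the corner blocks are at most $17 \times 17$.

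The main obstacle will be the rigorous counting in the second step: making precise the assertion ``at most $17$ PLG cells inside a $(2L+1) \times (2L+1)$ box'' for a $\mathcal{C}^1$-piecewise-smooth Jordan curve that may possess kinks or near-tangential crossings of grid lines. This requires a geometric argument that such a curve can meet only a bounded number of cut cells in an $O(h)$-sized window once $h$ resolves $\gamma$, combined with the cut-cell volume lower bound $\epsilon h^2$ from \Cref{alg:merging}, which controls the density of cut cells along the boundary. Once the $17$ is established, the remaining pieces---the index arithmetic for the central bandwidth and the bookkeeping for the seam---are direct consequences.
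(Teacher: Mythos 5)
Your proposal follows essentially the same route as the paper: stencil locality in index space, a count of how many PLG cells can separate two interacting cells in the arc-length ordering, and a seam exception absorbed into $O(1)$-sized corner blocks $A_{22,u}$, $A_{22,l}$. The one substantive piece you leave open --- and honestly flag as the main obstacle --- is precisely the piece the paper supplies, and it supplies it by a different mechanism than the one you propose. You suggest closing the count via the volume lower bound $\epsilon h^2$ from \Cref{alg:merging}; the paper instead derives the constant $17$ from \Cref{def:SFVCell} with $n=4$: a cut cell is a PLG cell exactly when its $5\times 5$ neighborhood contains a non-regular cell, so $\gamma$ must pass through that neighborhood and the center of every PLG cell lies within $\frac{5\sqrt{2}}{2}h$ of $\gamma$. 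Combining this transverse width with the worst-case stencil extent (a $5\times 5$ feasible box $K$ with $\mathbf{q}$ at a corner) gives the bound $5\times\frac{5\sqrt{2}}{2}\approx 17.7$ on the difference in numbering between $\mathbf{q}$ and any cell in its stencil, whence the bandwidth $17$; the same product bounds the dimensions of the corner blocks. Note also that what must be counted is not the number of PLG cells inside the $(2L+1)\times(2L+1)$ index box, as your second step states (that box can contain far more than $17$ PLG cells), but the number of PLG cells whose parameter $s$ falls between those of the two interacting cells --- your third step implicitly repairs this, but the paper's distance-to-$\gamma$ bound is what makes that repair quantitative. With that ingredient inserted, your argument matches the paper's; without it, the constant $17$ is unsupported.
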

\begin{proof}
  % Since $h$ is sufficiently small, 
  % the geometry of the boundary $\gamma$ is well resolved. 
  By $n=4$ and \Cref{def:SFVCell},
  an SFV cell is at the center of a 5-by-5 box of regular cells.
  A nonempty cut-cell is either an SFV cell or a PLG cell
  and each row of $A_{22}$ corresponds to a PLG cell.
  Therefore, the distance from the center of a PLG cell
  to $\gamma$ is at most $\frac{5\sqrt{2}}{2}h$.
  In the worst case,
  the set $K$ of feasible nodes in \Cref{def:finiteDiffLIP}
  is a 5-by-5 box, 
  of which the starting point $\mathbf{q}$ is at the box corner. 
  By the total ordering in \Cref{def:linearOrderingOfPLGcells}, 
  the difference between the numbering of $\mathbf{q}$
  and that of any multi-index in the box
  is bounded by $5\times \frac{5\sqrt{2}}{2}\approx 17.7$. 

  The above argument does not hold
  in a local neighborhood of $\gamma(0)$,
  where the difference of the numbering of two such PLG cells
  might be close to the total number of PLG cells.
  However, the number of these pairs of PLG cells is $O(1)$
  and these large differences in PLG cell numbering 
  can be assimilated either into $A_{22,l}$ or into $A_{22,u}$,
  whose dimensions, by similar arguments as above,  are at most 17.
\end{proof}

\begin{definition}
  \label{def:A22_LU}
  The \emph{LU factorization of $A_{22}$ in (\ref{eq:PoissonSplitDiscretization})}
  is given by
  \begin{equation}
    \label{eq:A22_LU}
    A_{22} :=
    \begin{bmatrix}
      B & P \\
      Q & S
    \end{bmatrix}
    = 
    \begin{bmatrix}
      L_B & \mathbf{0} \\
      Y & L_S
    \end{bmatrix}
    \begin{bmatrix}
      U_B & X \\
      \mathbf{0} & U_S
    \end{bmatrix}
    =: L_{22} U_{22}, 
  \end{equation}
  where diag$(B,S)=A_{22,c}$ in (\ref{eq:A22}),
  $P$ and $Q$ correspond to $A_{22,u}$ and $A_{22,l}$
  padded with zeros, respectively, 
  and the other subblocks are obtained by steps as follows.
  % \begin{enumerate}[label={(LUF-\arabic*)}, noitemsep,
  %   leftmargin = *]
  \begin{enumerate}[label={(\alph*)}]
  \item \label{line:B_LU}
    Perform an LU factorization on $B\in \mathbb{R}^{m\times m}$ to get
    $B = L_BU_B$;
  \item Solve $L_BX=P$ for $X\in\mathbb{R}^{m\times k}$
    by $k$ forward substitutions; % with $L_B$; 
  \item Solve $YU_B = Q$ for $Y \in \mathbb{R}^{k\times m}$
    by $k$ backward substitutions; % with $L_B$; 
  \item Perform an LU factorization on $S' = S-YX \in \mathbb{R}^{k\times k}$
    to get $S' = L_SU_S$.
  \end{enumerate}
\end{definition}

To examine the complexity of the above LU factorization, 
 we need
% This factorization will be used to
% solve the linear system (\ref{eq:updateByBlock}) in
% Section~\ref{sec:smoother} .
\begin{lemma}
  \label{lem:BandedLU}
  Suppose $A\in\mathbb{R}^{m\times m}$ has an LU factorization
  $A=LU$ and the bandwidth of $A$ is $p$. 
  Then the bandwidths of $L$ and $U$ are both $p$.
  In addition, the complexity of this factorization
  via Gaussian elimination is $O(m p^2)$.
\end{lemma}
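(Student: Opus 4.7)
The plan is to prove both claims by analyzing the Gaussian elimination process step by step and showing, via a straightforward induction, that the bandwidth is preserved at every step. Let $A^{(0)} := A$ and let $A^{(k)}$ denote the matrix after the $k$th elimination step, in which multiples of row $k$ are added to rows $k+1, \ldots, m$ to zero out entries in column $k$ below the diagonal.

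The first step is the \emph{bandwidth preservation claim}: I would prove by induction on $k$ that $A^{(k)}$ has bandwidth at most $p$, with the additional property that its first $k$ columns already have the structure of $U$ (zeros below the diagonal). For the inductive step, the key observations are: (i) by the bandwidth hypothesis on $A^{(k-1)}$, the only nonzero entries of column $k$ below row $k$ lie in rows $k+1, \ldots, k+p$, so only these rows are updated; (ii) row $k$ of $A^{(k-1)}$ has nonzeros only in columns $k, \ldots, k+p$, so when we add a multiple of row $k$ to row $i \in \{k+1, \ldots, k+p\}$, only entries in columns $k, \ldots, k+p$ of row $i$ are modified. Since $|i - j| \leq p$ for all these $(i,j)$, no new nonzero entries can appear outside the band of width $p$. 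This simultaneously shows that the multipliers $\ell_{ik} = a^{(k-1)}_{ik}/a^{(k-1)}_{kk}$ stored in $L$ are nonzero only for $i \in \{k+1, \ldots, k+p\}$, giving $L$ bandwidth $p$, and that the final upper triangular matrix $U = A^{(m-1)}$ has bandwidth $p$.

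The second step is the \emph{complexity count}, which now follows from the band structure. At elimination step $k$, the work consists of computing at most $p$ multipliers and then performing at most $p$ row updates, each touching at most $p+1$ entries (columns $k, \ldots, k+p$). This yields $O(p^2)$ flops per step. Summing over the $m-1$ elimination steps gives a total cost of $O(mp^2)$, as claimed.

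The only minor subtlety is that the lemma assumes $A$ admits an LU factorization (without pivoting), so all pivots $a^{(k-1)}_{kk}$ are nonzero and the inductive construction goes through unambiguously; no obstacle arises here since existence of the LU factorization is part of the hypothesis. I do not expect any real difficulty in this proof — both statements reduce to the single observation that bandwidth is preserved under the elementary row operations used in Gaussian elimination, and the flop count then falls out directly. If desired, one could shorten the exposition by simply noting that $L$ and $U$ inherit the bandwidth of $A$ because row $i$ of $U$ is a linear combination of rows $i, i-1, \ldots, i-p$ of $A$ (and similarly for columns of $L$), but the inductive presentation is cleaner and immediately yields the complexity estimate.
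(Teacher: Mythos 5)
Your proof is correct and follows essentially the same route as the paper: the paper simply cites Golub and Van Loan (Theorem 4.3.1) for the bandwidth-preservation claim and then performs the same per-step flop count $\sum_k 2\min(p,m-k)\min(p+1,m-k+1)=O(mp^2)$ that you obtain. Your inductive argument for bandwidth preservation is just the standard proof of the cited theorem written out explicitly, so there is no substantive difference in approach.
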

\begin{proof}
  The first conclusion follows from \cite[Theorem 4.3.1]{Golub}.
%  As for the second, 
  In the $k$th step of the Gaussian elimination, 
  all non-zero elements from the $(k+1)$th row to the
  $\min(k+p,m)$th row need to be annihilated.
  Therefore, the total number of floating-point operations
  in this LU factorization is
  $\sum_{k=1}^{m-1}2\min(p,m-k) \cdot \min(p+1, m-k+1)$, 
  yielding a complexity of $O(m p^2)$.
\end{proof}

% Based on the decomposition (\ref{eq:A22}), we
% show that the complexity of the LU factorization of
% the matrix $A_{22}$ achieves $\bigO(h^{-1})$.

\begin{theorem}
  \label{thm:ComplexityBandedLUL22}
  For $A_{22}\in \mathbb{R}^{m\times m}$ in
  (\ref{eq:PoissonSplitDiscretization}), 
  we have $m=\bigO(h^{-1})$
  and the complexity of
  the LU factorization of $A_{22}$ in \Cref{def:A22_LU}
  is also $\bigO(h^{-1})$.
\end{theorem}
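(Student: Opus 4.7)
The plan is to prove the two claims in sequence: first the bound $m = O(h^{-1})$ on the number of PLG cells, and then the $O(h^{-1})$ bound on the work of the factorization, by combining \Cref{lem:bandWidth} and \Cref{lem:BandedLU} with the four steps of \Cref{def:A22_LU}.

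For $m = O(h^{-1})$, I would argue as follows. By \Cref{def:SFVCell} and the reasoning already used in \Cref{lem:bandWidth}, every PLG cell lies within distance $\tfrac{5\sqrt{2}}{2}h$ of $\partial\Omega$. Since $\Omega$ is a Yin set, its boundary consists of a finite number of Jordan curves with bounded total arc length $L$ (independent of $h$). The number of grid cells of size $h$ that intersect an $O(h)$-tubular neighborhood of $\partial\Omega$ is therefore $O(L/h) = O(h^{-1})$, which bounds the number of PLG cells. After the grid is fine enough to resolve the geometry, this estimate also covers the $O(1)$ extra cells near each curve's base point.

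For the complexity, I would walk through the four substeps of \Cref{def:A22_LU} using \Cref{lem:bandWidth} with bandwidth $p = 17$ and corner block size $k \le 17$, noting that $B \in \mathbb{R}^{(m-k)\times(m-k)}$ has bandwidth at most $17$. Step (a): by \Cref{lem:BandedLU}, the banded LU factorization of $B$ costs $O((m-k)\,p^2) = O(m)$, and the resulting $L_B, U_B$ inherit bandwidth $p$. Step (b): each of the $k$ forward substitutions against the banded lower-triangular $L_B$ costs $O(m\,p) = O(m)$, so all $k$ together cost $O(k\,m\,p) = O(m)$ since $k, p = O(1)$. Step (c) is analogous to (b) via backward substitution against $U_B$. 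Step (d) factors the $k \times k$ dense matrix $S'$ at cost $O(k^3) = O(1)$; forming $S' = S - YX$ also costs $O(k^2 m) = O(m)$ because $Y, X$ are $k \times m$ and $m \times k$, respectively. Summing gives total cost $O(m) = O(h^{-1})$.

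The main subtlety, and thus the principal obstacle, is justifying that the ``base point'' exceptions of \Cref{lem:bandWidth} (the off-diagonal blocks $A_{22,u}, A_{22,l}$ of size at most $17$) do not destroy the bandedness exploited by \Cref{lem:BandedLU} in step (a). The key observation is that these corner blocks were peeled off into $P, Q$ and $S$ in \Cref{def:A22_LU}, so what enters the banded factorization of step (a) is precisely $B$, a true banded matrix of bandwidth at most $17$; the corner contributions are handled by the cheap auxiliary steps (b)--(d). A secondary technical point is ensuring that the LU factorizations in (a) and (d) exist without pivoting in the first place, but this is granted by the hypothesis of \Cref{def:A22_LU} and need only be invoked, not established.
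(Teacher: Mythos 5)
Your proposal is correct and follows essentially the same route as the paper: the bound $m=\bigO(h^{-1})$ comes from the PLG cells lying in an $O(h)$-neighborhood of the codimension-one boundary $\partial\Omega$, and the $\bigO(h^{-1})$ work bound comes from stepping through (a)--(d) of \Cref{def:A22_LU} with \Cref{lem:bandWidth} and \Cref{lem:BandedLU}, exactly as in the paper's (much terser) proof. Your additional remarks --- that the corner blocks are peeled into $P$, $Q$, $S$ so that step (a) operates on a genuinely banded $B$, and that existence of the factorizations is assumed rather than proved --- are correct fillings-in of details the paper leaves implicit.
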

\begin{proof}
  $m=\bigO(h^{-1})$ follows from
  $\partial \Omega$ being a set of codimension one in $\Omega_R$.
  
  For steps in \Cref{def:A22_LU}, 
  \Cref{lem:bandWidth,lem:BandedLU} imply
  that the complexity of (a) is $\bigO(m)$, 
  \Cref{lem:bandWidth} dictates that
  the complexity of each of (b) and (c) is $\bigO(m)$
  and that the dimension of $S$ is $O(1)$, 
  and thus the complexity of (d) is also $\bigO(1)$. 
\end{proof}

%} % end color

% ^^^^^^^^^^^^^^^^^^^^^^^^^^^^^^^^^^^^^^^^^^^^^^^^^^^^^^
% modification end

% In the following subsections, we propose the modified smoothing
% operator in Section~\ref{sec:smoother} and intergrid transfer
% operator in Section~\ref{sec:intergrid}, both of which are essential
% components of the multigrid algorithm.

\subsection{A block smoother} % on a single level grid
\label{sec:smoother}

A \emph{fixed point iteration
  for solving a linear system} $A\mathbf{u} = \mathbf{b}$
is an iteration of the form
$\mathbf{u}^{(k+1)} = T\mathbf{u}^{(k)} + \mathbf{c}$
where $\mathbf{u}^{(k)}$ is the $k$th iterate that approximates $\mathbf{u}$
 while $T$ and $\mathbf{c}$ are functions of $A$ and $\mathbf{b}$
 satisfying $\mathbf{u}=T\mathbf{u}+\mathbf{c}$.

The \emph{Jacobi iteration}
 is a fixed point iteration 
 with $T_J = I - D^{-1}A$, $\mathbf{c}_J=D^{-1}\mathbf{b}$,
 where $D$ is the diagonal part of $A$.
The \emph{weighted Jacobi iteration} is another fixed point iteration
of the form
\begin{equation}
  \label{eq:weightedJacobi}
  \mathbf{u}^{(k+1)} :=
  (1-\omega)\mathbf{u}^{(k)} + \omega\mathbf{u}_*
  =(I-\omega D^{-1}A)\mathbf{u}^{(k)} + \omega\mathbf{c}_J, 
\end{equation}
where $\mathbf{u}_* = T_J\mathbf{u}^{(k)} + \mathbf{c}_J$.
%  \begin{equation}
%    \label{eq:weightedJacobi}
%    \mathbf{u}^{(k+1)} = (1-\omega)\mathbf{u}^{(k)} + \omega\mathbf{u}_* \quad
%    \text{ where }
%    \mathbf{u}_* = T_J\mathbf{u} + \mathbf{c}_J.
% \end{equation}
Due to the indefiniteness of $A_{22}$,
 a direct application to (\ref{eq:PoissonSplitDiscretization})
 would result in divergence.

Exploiting the block structure of (\ref{eq:PoissonSplitDiscretization})
 and the optimal complexity of the LU factorization in
 \Cref{thm:ComplexityBandedLUL22}, 
 we propose %an effective and efficient smoother as follows.
 
\begin{definition}
  \label{def:blockSmoother}
  The \emph{block smoother %of the proposed cut-cell multigrid method}
    for the linear system (\ref{eq:PoissonSplitDiscretization})}
  is a fixed point iteration of the form  
  \begin{equation}
    \label{eq:blockSmoother}
    \begin{bmatrix}
      \mathbf{u}_1 \\ \mathbf{u}_2
    \end{bmatrix}^{(k+1)}
    = T_{\omega}
    \begin{bmatrix}
      \mathbf{u}_1 \\ \mathbf{u}_2
    \end{bmatrix}^{(k)}
    +
    \begin{bmatrix}
      \omega D_{11}^{-1} & \mathbf{0}
      \\
      -\omega U_{22}^{-1}L_{22}^{-1}A_{21}D_{11}^{-1} & U_{22}^{-1}L_{22}^{-1}
    \end{bmatrix}
    \begin{bmatrix}
      \mathbf{b}_1
      \\
      \mathbf{b}_2
    \end{bmatrix},
  \end{equation}
  where $L_{22}U_{22}=A_{22}$ is the LU factorization in
  (\ref{eq:A22_LU}),
  $D_{11}$ is the diagonal of $A_{11}$, 
  % $D_{11}$, $-L_{11}$, and $-U_{11}$
  % are the diagonal, lower-triangular, and upper-triangular part
  % of $A_{11}$, respectively,
  and %$T_{\omega}$ is the iteration matrix
  \begin{displaymath}
%    \label{eq:smootherIterMat}
    T_{\omega} :=
    \begin{bmatrix}
      I & \mathbf{0}
      \\
      -U_{22}^{-1}L_{22}^{-1}A_{21} & \mathbf{0}
    \end{bmatrix}
    \begin{bmatrix}
%      D_{11}^{-1}\left( (1-\omega)D_{11} + \omega (L_{11}+U_{11}) \right)
      I - \omega D_{11}^{-1} A_{11}
      & -\omega D_{11}^{-1}A_{12}
      \\ 
      \mathbf{0} & \mathbf{0}  
    \end{bmatrix}.
  \end{displaymath}
\end{definition}

To derive (\ref{eq:blockSmoother}), 
 we first apply the weighted Jacobi 
 to the first equation in (\ref{eq:PoissonSplitDiscretization}),
% obtaining
 \begin{equation}
   \label{eq:blockSmoother-1}
   \mathbf{u}^{(k+1)}_1 = 
   (I - \omega D_{11}^{-1}A_{11}) \mathbf{u}^{(k)}_1 
   + \omega D_{11}^{-1}\left(\mathbf{b}_1-A_{12}\mathbf{u}_2^{(k)}\right),
 \end{equation}
 and then exploit the LU factorization in (\ref{eq:A22_LU})
 to solve for $\mathbf{u}_2^{(k+1)}$, i.e., 
 \begin{equation}
   \label{eq:blockSmoother-2}
   L_{22}U_{22}\mathbf{u}_2^{(k+1)}
   = \mathbf{b}_2 - A_{21} \mathbf{u}^{(k+1)}_1.
 \end{equation}

After one iteration of (\ref{eq:blockSmoother}), 
 the residue vector on PLG cells,
 according to (\ref{eq:errorAndResidual}),  
 is $\mathbf{r}_2^{(k+1)} := \mathbf{b}_2
 - A_{21}\mathbf{u}_1^{(k+1)}-A_{22}\mathbf{u}_2^{(k+1)}$. 
Then (\ref{eq:blockSmoother-2}) implies 
 $\mathbf{r}_2^{(k+1)} = \mathbf{0}$. 
In other words, 
 we always have $\mathbf{r}_2^{(k+1)}=\mathbf{0}$
 for any $\mathbf{b}$;
 this is the key design of \Cref{def:blockSmoother}.

The block smoother will also be applied in \Cref{alg:VCycle,alg:FMG}
 to the residual equation $A\mathbf{e} = \mathbf{r}$.
Then the residual vector $\mathbf{r}$
 % Different from the RHS vector $\mathbf{b}$
 % of the original equation,  
 must be updated after each iteration.
% this explains the superscript in  $\mathbf{r}_2^{(k+1)}$. 

% When the weighted Jacobi is repeatedly applied to
%  the original linear system $A\mathbf{u} = \mathbf{b}$, 
%  the vector $\mathbf{b}$ on the right-hand side (RHS) remains unchanged.
% In contrast, for the residual equation
%  (\ref{eq:PoissonSplitDiscretization}), 
%  its RHS needs to be updated after each iteration.
% Therefore, the residual vector in (\ref{eq:blockSmoother})
%  is marked with the subscript $^{(k)}$.

In classical multigrid theory,
 the value of $\omega$ in (\ref{eq:weightedJacobi})
 is determined by %its smoothing factor,
 minimizing the supremum of the set of all damping factors 
 for high-frequency modes.
For the diagonally dominant matrix
 resulting from the second-order FD discretization of the Laplacian operator,
 it is known \cite[p. 21]{Briggs:A_Multigrid_Tutorial}
 \cite[p. 31]{Trottenberg2001}
 that the optimal value of $\omega$
 for the weighted Jacobi is $\omega = \frac{2}{3}$
 and $\omega = \frac{4}{5}$ 
 on $(0,1)$ and $(0,1)^2$, respectively. 
 % on  one-dimensional and two-dimensional 
 % regular domains
% yield the smoothing factors $\frac{1}{3}$ and $\frac{2}{3$}
However,
 setting $\omega = \frac{4}{5}$ in (\ref{eq:blockSmoother})
 leads to numerical divergence in our fourth-order FV discretization, 
 even on the regular domain $(0,1)^2$.
Hence the smoothing property of the block smoother in \Cref{def:blockSmoother}
 is affected not only by the irregular domain
 but also by the fourth-order FV discretization. 
 % suggesting that 
 % may be fundamentally different from the weighted Jacobi
 % on regular domains.
As such, it is difficult to analytically derive 
 the optimal value of $\omega$ in (\ref{eq:blockSmoother}).

In this work, 
 we set $\omega = 0.5$,
 which, according to extensive numerical experiments,
 preserves the smoothing property of the block smoother
 in (\ref{eq:blockSmoother}) 
 and minimizes the spectral radius of the two-grid correction
 operator in (\ref{eq:TG}),
 cf. \Cref{tab:spectralRadius}.
%Therefore, we use $\omega = 0.5$ 

\subsection{A cut-cell V-cycle}
\label{sec:Vcycles}

%Our multigrid cycles operate on
Define a hierarchy of levels of cut cells
\begin{equation}
  \label{eq:grids}
  \mathfrak{C}_{\Omega}(h_f,n_l,\epsilon) := \left\{
    \mathsf{C}_{\epsilon}^{h}(\Omega) : h = h_f,\ldots,2^{n_l-1}h_f\right\},
\end{equation}
where $h_f$ is the size of the finest grid,
$n_l$ the number of levels of multigrid,
and each level $\mathsf{C}_{\epsilon}^{h}(\Omega)$ 
the output of \Cref{alg:merging}
with $(\Omega, \Omega_R, h, \epsilon)$ as the input.
By \Cref{sec:discretization},
 we have, on each level, a linear system $A^{h}\mathbf{u}^h = \mathbf{b}^h$
 in the block form of %(\ref{eq:PoissonLinearDiscretization})
 (\ref{eq:PoissonSplitDiscretization}). 
%Due to the 
% We stop at $\Omega^{(h_0)}$ where the direct solution (for example, via LU
% factorization)
% of the discrete linear system is inexpensive.

\begin{algorithm}
  \caption{\textbf{V-cycle}$(A^{h}, \mathbf{u}^{h},
    \mathbf{b}^{h}, \nu_1, \nu_2)$}
  \label{alg:VCycle}

  \textbf{Input}:
  $(A^{h}, \mathbf{b}^h)$: the linear system resulting from
  discretizing (\ref{eq:ccEllipticEq}) on $\mathsf{C}_{\epsilon}^{h}$;
  \\
  $\parbox[t]{20mm}\ $
  $\mathbf{u}^{h}$: the initial guess of $(A^{h})^{-1}\mathbf{b}^h$;
  \\
  $\parbox[t]{13.2mm}\ $
  $(\nu_1, \nu_2)$: the smoothing parameters.
  \\
  \textbf{Side-effect}: $\mathbf{u}^{h}$ is updated
  as a better approximation to $(A^{h})^{-1}\mathbf{b}^h$.
  % \IF{$m$ is the coarsest level}
  \begin{algorithmic}[1]
    \If{$h$ is the grid size of the coarsest level}
    \State $\mathbf{u}^{h}\leftarrow
    \text{\textbf{BottomSolver}}(A^{h},
    \mathbf{b}^{h})$
    \label{line:bottomSolver}
    \Else
    % \State Initialize $u^{(m)} = \mathbf{0}$.
    \For{$i = 1, \ldots ,\nu_1$}
    \State $\mathbf{u}^{h}\leftarrow
    \text{\textbf{Smooth}}(A^{h}, \mathbf{u}^{h}, \mathbf{b}^{h})$
    $\parbox[t]{11.5mm}\ $//
    \mbox{see (\ref{eq:blockSmoother})}
    \label{line:preSmooth}
    \EndFor
    % \State Compute the coarse residual by restriction: \\
    \State $\mathbf{r}^{2h}\leftarrow
    \text{\textbf{Restrict}}(\mathbf{b}^{h} - A^{h} \mathbf{u}^{h})
    $\parbox[t]{15mm}\ $//
    \mbox{ see (\ref{eq:restriction})}$
    \label{line:restrict}
    % \State Recursively solve the algebraic equation on the coarse grid: \\
    \State $\mathbf{e}^{2h} \leftarrow
    \text{\textbf{V-cycle}}(A^{2h}, \mathbf{0}^{2h}, \mathbf{r}^{2h}, \nu_1, \nu_2)$
    $\parbox[t]{2.5mm}\ //\mbox{ the initial guess is a zero vector}$
    \State $\mathbf{u}^{h} \leftarrow \mathbf{u}^{h} +
    \text{\textbf{Interpolate}}(\mathbf{e}^{2h})
    $\parbox[t]{13.5mm}\ $//
    \mbox{ see~(\ref{eq:interpolation})}$
    \label{line:interpolation}
    \For{$i = 1, \ldots ,\nu_2$}
    \State $\mathbf{u}^{h}\leftarrow \text{\textbf{Smooth}}(A^{h}, \mathbf{u}^{h}, \mathbf{b}^{h})$
    $\parbox[t]{11mm}\ $//
    \mbox{see (\ref{eq:blockSmoother})}
    \label{line:postSmooth}
    \EndFor
    \EndIf
%    \State \Return $\mathbf{u}^{h}$.
  \end{algorithmic}
\end{algorithm}

We present in \Cref{alg:VCycle}
 a cut-cell V-cycle that appears very similar
 to standard geometric multigrid V-cycles.
At line 2,
 we directly %an LU factorization is employed
 solve the linear system if the current grid is the coarsest one. 
Otherwise,
 we use (\ref{eq:blockSmoother}) to block-smooth $\mathbf{u}^h$
 $\nu_1$ times at lines 4--6,
 restrict the corresponding residual to the next coarser level at line 7,
 call \Cref{alg:VCycle} recursively
 to solve the residual equation on the coarser level at line 8,
 correct the solution by the error interpolated from the coarse level
 at line 9,
 and finally block-smooth $\mathbf{u}^h$ %by (\ref{eq:blockSmoother})
 $\nu_2$ times at lines 10--12. 

For the restriction operator at line 7,
 we first observe that each irregular cell is a PLG cell
 and hence its residual becomes zero
 after one block smoothing. 
Furthermore, 
 if an irregular fine cell is covered by some coarse cell,
 then all fine cells (regular or irregular) covered by this coarse cell
 have their residuals as identically zero
 after one round of block smoothing, 
 due to the fact of the refinement ratio being 2
 and the width of SFV stencil being 5;
 see \Cref{fig:OrderOnBdry}.
Consequently, residual restriction only happens
 between \emph{regular} fine cells %(which may be PLG or SFV)
 and \emph{regular} coarse cells.
These observations obviate
 the need of volume weighting in residual restriction
 and lead to a restriction operator
 $I_{h}^{2h}: \mathbf{r}^h\rightarrow \mathbf{r}^{2h}$
 of the simple form  
\begin{equation}
  \label{eq:restriction}
  \brk{r^{2h}}_{\lfloor \frac{\bmi}{2} \rfloor} =
  2^{-\Dim} \sum\nolimits_{\bmj \in \left\{ 0, 1 \right\}^\Dim} \brk{r^{h}}_{\bmi + \bmj},
\end{equation}
where $\lfloor \mathbf{k} \rfloor$ is the greatest multi-index
less than or equal to the $\Dim$-tuple $\mathbf{k}$ of real numbers.
Thanks to the FV formulation,
 (\ref{eq:restriction}) incurs no discretization errors. 
On the other hand, the interpolation operator
$I_{2h}^{h}: \mathbf{e}^{2h}\rightarrow \mathbf{e}^{h}$
is given by 
\begin{equation}
  \label{eq:interpolation}
  \brk{e^{h}}_{\bmi} = \brk{e^{2h}}_{\lfloor \frac{\bmi}{2} \rfloor} 
\end{equation}
so that (\ref{eq:restriction}) and (\ref{eq:interpolation})
satisfy the variational property $I_{2h}^h = 2^{\Dim}(I_h^{2h})^{\top}$.
%  which is essential for the convergence of the proposed  cut-cell geometry
% multigrid method. 

Residual restriction %of residuals on the fine level
 to a coarse regular cell %on the coarse level
 might involve both PLG fine cells and SFV fine cells;
 for example, 
 the two PLG fine cells numbered \#14 and \#16 
 in \Cref{fig:OrderOnBdry}
 and the two SFV cells to the right of them
 are covered by a PLG coarse cell, 
 whose residual vanishes after a single pre-smoothing.
Similarly,
 after errors on coarse PLG cells
 are interpolated to fine cells, 
 those of fine PLG cells are immediately annihilated
 by one round of post-smoothing.
In addition,
 each fine PLG cell is covered by some coarse PLG cell. 
These observations, together with 
 the classical theory of geometric multigrid, 
 furnish strong heuristics in supporting the convergence 
 of the cut-cell V-cycle. % in \Cref{alg:VCycle}. 
They also suggest that
 both $\nu_1$ and $\nu_2$ be at least 1.

For the particular case of $n_l=2$ in (\ref{eq:grids}),  
 the cut-cell V-cycle reduces to a two-grid correction operator
%To examine the convergence of 
 \cite[p. 82]{Briggs:A_Multigrid_Tutorial} given by 
 \begin{equation}
   \label{eq:TG}
   TG := T_{\omega}^{\nu_2}\left[ I -
     I_{2h}^{h}(A^{2h})^{-1}I_h^{2h}A^{h}
   \right]T_{\omega}^{\nu_1}. 
 \end{equation}

\begin{table}
  \centering
  \caption{Values of $\rho(TG)$, the spectral radius of $TG$ in (\ref{eq:TG})
    with $\omega=\frac{1}{2}$, 
    for elliptic problems on various domains
    as specified in \Cref{sec:Tests}. 
    % The problem setup in
    % \Cref{fig:solutionOfRbox},
    % \Cref{fig:SolOfProblem1},
    % and \Cref{fig:errorNeumannOfProblem2}
    % are the same as those in
    % \Cref{sec:squares},
    % \Cref{sec:petals}, and \Cref{sec:ellipse}, respectively. 
    In particular,
    the elliptic equation solved on the rotated square
    in \Cref{fig:rotatedSquare-solution}
    has a cross-derivative term 
    and the irregular boundary in \Cref{fig:squareMinusFourDisks-solnErr-N}
    is equipped with a Neumann boundary condition.
    For each case,
    we select three successively refined grids
    so that the most significant digits of the calculated spectral radii
    are the same on the two finest grids. 
    The pairs of integers in the first row are values
    of $(\nu_1,\nu_2)$. 
  }
  \label{tab:spectralRadius}
  \renewcommand{\arraystretch}{1.2}
  \begin{tabular}{c|c|cccc}
  \hline 
  Test cases
  % $(\nu_1, \nu_2)$
  & $(1,0)$ & $(1,1)$ & $(2,1)$ & $(2,2)$ & $(3,3)$
  \\ \hline
  the unit square $(0,1)^2$ in \Cref{fig:unitSquare-solution} & 1.080 &  0.758 &  0.603
                                                 &  0.513 & 0.378
  \\ \hline
  the rotated square $\Omega_r$ in \Cref{fig:rotatedSquare-solution} & 1.110 &  0.745 &  0.523
                                                 &  0.421 & 0.275
  \\ \hline
  $(0,1)^2$ minus a flower in \Cref{fig:squareMinusFlower-solution} & 1.069 & 0.698 &  0.483 &  0.414
                                                 & 0.283 
  \\ \hline
  $(0,1)^2$ minus four disks in
  \Cref{fig:squareMinusFourDisks-solnErr-N} & 1.272 & 0.878  &
                                                 0.641 & 0.488 & 0.308
  \\ \hline 
\end{tabular}

%%% Local Variables:
%%% mode: latex
%%% TeX-master: "../ellipticFV2D"
%%% End:

\end{table}%

We numerically calculate 
 the spectral radii $\rho(TG)$ of $TG$ in (\ref{eq:TG}), 
 also known as the \emph{convergence factor} of $TG$, 
 for the test problems in \Cref{sec:Tests},
 verify the independence of $\rho(TG)$ on $h$ for each test case, 
 and collect their values in Table~\ref{tab:spectralRadius}.
Before these results are discussed, 
 we mention the result in \cite[Section 4.6.1]{Trottenberg2001}
 that 0.084 is the value of 
 the convergence factor of the classical two-grid operator 
 with $(\nu_1, \nu_2)=(2,2)$,
 Gauss-Seidel smoothing, full weighting restriction, 
 and bilinear interpolation for second-order FD
 discretization of Poisson's equation
 in the unit square. 

Table~\ref{tab:spectralRadius}
 leads to observations as follows. 
First,
 $\rho(TG)$ are close to 1 for $\nu_2=0$, 
% implying the divergence of the block smoother
 confirming the above discussion that
 neither $\nu_1$ nor $\nu_2$ should be zero. 
Second,
 for each test case,
 $\rho(TG)$ decreases monotonically
 as $\nu_1+\nu_2$ increases,
 verifying the effectiveness of the block smoother.
Third, 
 by results of the first two test cases, 
 values of $\rho(TG)$ on the \emph{regular} domain in \Cref{fig:squares}
 are greater than those on the corresponding \emph{irregular} domain, 
 implying that 
 it is not the treatment of irregular domains
 but the fourth-order discretization
 of the elliptic operator ${\cal L}$
 and the intergrid transfer operators
 that cause $TG$ in (\ref{eq:TG})
 to be less effective than that
 of the aforementioned classical V-cycle of second-order FD discretization.
 
For $\nu_1=\nu_2=2$, 
 all values of $\rho(TG)$ are less than 0.52. 
Then it follows from $0.52^{3.79}\approx 0.084$ that, 
 to obtain the same ratio of residual reduction, 
 the number of cut-cell V-cycles
 needs to be 3.79 times as many
 as that of classical multigrid \mbox{V-cycles}.
Fortunately,
 this gap can be very much reduced
 by bringing a cut-cell FMG cycle into the big picture.

\subsection{A cut-cell FMG cycle}
\label{sec:complexity}

%For both the cut-cell V-cycle and a classical multigrid V-cycle, 
The convergence factor $\rho$ of a V-cycle is 
 usually independent of the grid size $h$
 and is less than 1, 
%While $\rho<1$ guarantees convergence,  
 and thus it takes $\bigO(\log(h^{-1}))$ V-cycles
 to solve the linear system $A^h\mathbf{u}^h=\mathbf{b}^h$.
 % reduce the residual from $O(1)$ to $O(h^p)$
 % where the order-of-accuracy $p$ is a positive real number.
% As one common disadvantage of both cut-cell V-cycles
%  and classical multigrid V-cycles, 
It is well known from the multigrid literature
 \mbox{\cite[p. 77--78]{Briggs:A_Multigrid_Tutorial}} 
 that this suboptimal complexity of $\bigO(\log(h^{-1}))$ V-cycles
 can be improved to the optimal complexity of $\bigO(1)$ FMG cycles. 
 % each of which is at most twice expensive
 % as the deepest V-cycle.

\begin{algorithm}
  \caption{\textbf{FMG}$(A^{h}, \mathbf{r}^{h}, \nu_1, \nu_2)$}
  \label{alg:FMG}
  
  \textbf{Input}:
  $(A^{h}, \mathbf{r}^h)$: a residual equation corresponding to
  the linear system (\ref{eq:PoissonLinearDiscretization}); 
  % resulting \\
  % $\parbox[t]{26.6mm}\ $from
  % discretizing (\ref{eq:ccEllipticEq}) on $\mathsf{C}_{\epsilon}^{h}$;
  \\
  $\parbox[t]{12.3mm}\ $
  $(\nu_1, \nu_2)$: the smoothing parameters.
  \\
  \textbf{Output}: An approximation to $(A^{h})^{-1}\mathbf{r}^{h}$.
  % \IF{$m$ is the coarsest level}
  \begin{algorithmic}[1]
    \If{$h$ is the grid size of the coarsest level}
    \State \Return
    $\text{\textbf{BottomSolver}}(A^{h}, \mathbf{r}^{h})$
    \EndIf
    % \State Compute the coarse residual by restriction: \\
    \State $\mathbf{r}^{2h}\leftarrow
    \text{\textbf{Restrict}}(\mathbf{r}^{h})$
    $\parbox[t]{20mm}\ $//
    \mbox{ see (\ref{eq:restriction})}
    % \State Recursively solve the algebraic equation on the coarse grid: \\
    \State $\mathbf{e}^{2h} \leftarrow \text{\textbf{FMG}}(A^{2h}, \mathbf{r}^{2h}, \nu_1, \nu_2)$
    $\parbox[t]{6mm}\ $//
    \mbox{ recursive call to FMG}
    % \State Prolong the correction and update the solution: \\
    \State $\mathbf{e}^{h} \leftarrow 
    \text{\textbf{Interpolate}}(\mathbf{e}^{2h})$
    $\parbox[t]{14.5mm}\ $//
    \mbox{ see~(\ref{eq:interpolation})}
    \State $\text{\textbf{VCycle}}(A^{h}, \mathbf{e}^{h},
    \mathbf{r}^{h}, \nu_1, \nu_2)$
    $\parbox[t]{11mm}\ $//
    \mbox{ see \Cref{alg:VCycle}}
    \State \Return $\mathbf{e}^{h}$ 
  \end{algorithmic}
\end{algorithm}

Our cut-cell FMG cycle is formalized in \Cref{alg:FMG}
 and illustrated in \Cref{fig:FMG-cycle}.
To solve the linear system
 $A^{h_f} \mathbf{u}^{h_f} = \mathbf{b}^{h_f}$
 on a hierarchy in (\ref{eq:grids}),
 we first convert it to a residul equation
 $A^{h_f} \mathbf{e}^{(0)} = \mathbf{r}^{(0)}$
 with an initial guess $\mathbf{u}^{(0)}$ 
 and then invoke
 FMG($A^{h_f}, \mathbf{r}^{(i)}, \nu_1, \nu_2$) iteratively.
 % For $i\ge 1$,
During this iteration, 
 $\mathbf{r}^{(i)}$ is the only input parameter that changes:   
 the $i$th error $\mathbf{e}^{(i)}$ returned by FMG 
 leads to the $(i+1)$th solution
 $\mathbf{u}^{(i+1)} = \mathbf{u}^{(i)}+\mathbf{e}^{(i)}$,
 which, by (\ref{eq:errorAndResidual}),
 further yields the new residual
 $\mathbf{r}^{(i+1)} = \mathbf{b}^{h_f} - A^{h_f} \mathbf{u}^{(i+1)}$.
The iteration stops when  
 $\|\mathbf{r}^{(i)}\|$
 %or its successive reduction rate
 drops below a prescribed tolerance.
By \Cref{tab:residualReduction}, 
 one iteration of this cut-cell FMG cycle with $(\nu_1,\nu_2)=(3,3)$
 reduces the residual by a factor between 7.5 and 10 
 for numerical tests in \Cref{sec:Tests}.
 
\begin{figure}
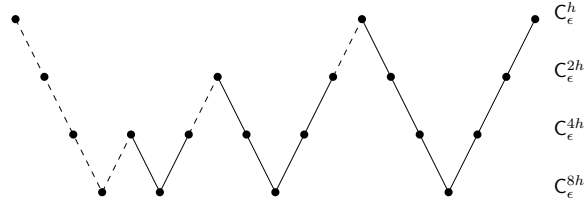

  \center
  \includestandalone[width=0.6\textwidth]{tikz/FMGCycle}
  \caption{Illustrating the FMG cycle in \Cref{alg:FMG}
    on a hierarchy of four levels.
    FMG begins with a descent at line 4
    to the coarsest level $\mathsf{C}_{\epsilon}^{8h}$; 
    this is represented by the first three downward dashed line segments.
    Then the solution is
    interpolated to $\mathsf{C}_{\epsilon}^{4h}$ at line 6
    and used as the initial guess to the V-cycle at line 7
    on $\mathsf{C}_{\epsilon}^{4h}$. 
    This ``interpolation $+$ V-cycle" process is repeated recursively:
    the interpolation is represented by an upward dashed line
    and the V-cycles are represented by the solid lines.
    This FMG cycle is more effective
    than the V-cycle %in \Cref{alg:VCycle}
    because it comes up with a much better initial guess, 
    cf. line 8 at \Cref{alg:VCycle}.
  }
  \label{fig:FMG-cycle}
\end{figure}

Finally, we claim that 
 the cut-cell FMG cycle in \Cref{alg:FMG} 
 is of the optimal complexity $\bigO(h^{-2})$.
In setting up the block smoother, 
 $A_{11}$ is initialized in $O(h^{-2})$ time
 while all other block matrices
 are computed in $O(h^{-1})$ time,
 cf. \Cref{thm:ComplexityBandedLUL22}.
In solving $A^{h_f} \mathbf{u}^{h_f} = \mathbf{b}^{h_f}$ on
 $\mathfrak{C}_{\Omega}(h_f)$, 
 the entire computation cost of an FMG cycle 
 is $O(h^{-2})$, the same as that of
 the deepest V-cycle,
 because an FMG cycle in 2D is at most $\frac{4}{3}$ times
 more expensive than the deepest V-cycle 
 \cite[p. 47--48]{Briggs:A_Multigrid_Tutorial}.

\section{Numerical tests}
\label{sec:Tests}

In this section
we demonstrate the fourth-order accuracy
and the optimal efficiency of our cut-cell geometric multigrid method
by results of various test problems.
To facilitate accuracy comparisons of our method
 to the second- and fourth-order EB methods
 in \cite{Johansen1998,Devendran2017}, 
 we follow \cite{Johansen1998,Devendran2017}
 to measure computational errors by the $L^p$ norms, 
\begin{equation}
  \label{eq:LpNorms}
  \Vert u \Vert_p = 
    \begin{cases}
      \left( \frac {1} {\Vert \Omega \Vert}
        \sum_{{\cal C}_{\ibold}\in \mathsf{C}_{\epsilon}^h(\Omega)}
        \left\Vert \calC_\bmi \right\Vert \cdot
        \left\vert \brk{u}_{\bmi} \right\vert^p \right)^{\frac{1}{p}}
      & \text{if } p = 1, 2; 
      \\
      \max_{{\cal C}_{\ibold}\in \mathsf{C}_{\epsilon}^h(\Omega)}
       \left\vert \brk{u}_{\bmi} \right\vert
      & \text{if } p = \infty,
    \end{cases}
\end{equation}
where $\mathsf{C}_{\epsilon}^h(\Omega)$
is the set of nonempty cut cells
in (\ref{eq:mergedCutCellSet}).
 
% and the factor $\frac {1} {\Vert \Omega \Vert}$
% is added so that our error norm

\subsection{A rotated square}
%   the effect of irregular boundary}
\label{sec:squares}

\begin{figure}
  \centering
  \subfigure[Results on the unit square] {
    \includegraphics[width=0.3\linewidth, trim=0.6in 0.29in 0.6in 0.29in, clip]{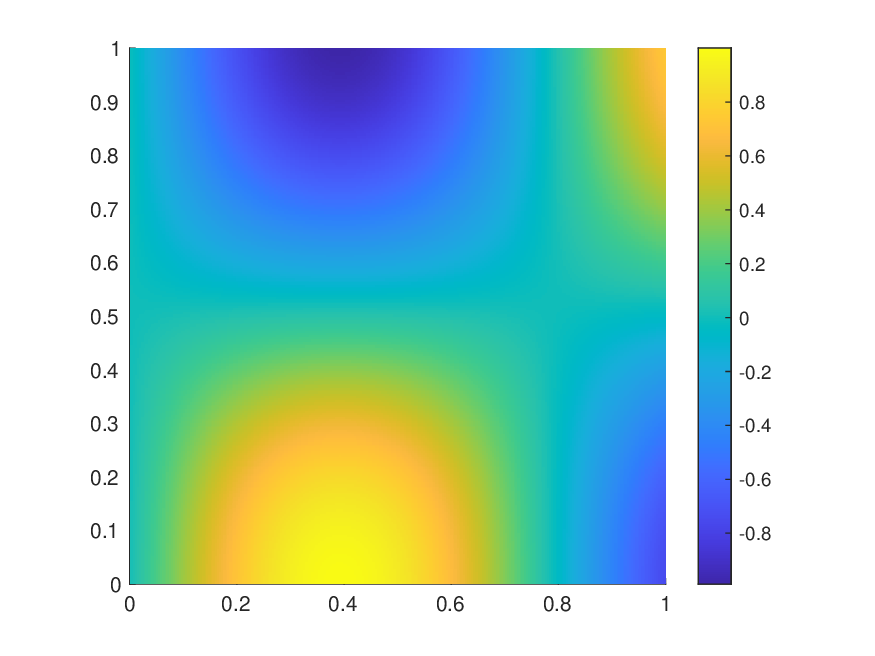}
    \label{fig:unitSquare-solution}
  }
  \hfill
  \subfigure[Results on the rotated unit square $\Omega_r$] {
    \includegraphics[width=0.41\linewidth, trim=0.6in 0.29in 0.6in 0.29in, clip]{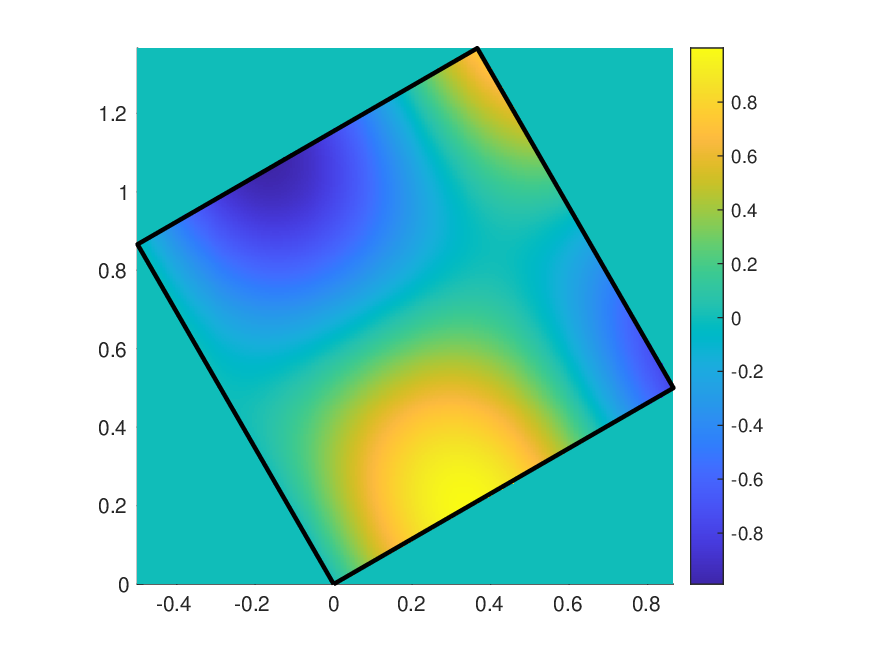}
    \label{fig:rotatedSquare-solution}
  }
  \label{fig:squares}
  \caption{
    Results of the proposed cut-cell multigrid method
    in solving the two equivalent tests in \Cref{sec:squares}
    % the Comparison between the solutions of two equivalent systems
    with $h=\frac{1}{256}$.
    The two exact solutions 
    are related by a rotation of $\frac{\pi}{6}$ around $(0,0)$. % the origin.
  }
\end{figure}

\begin{table}
  \centering
  \caption{Error norms and convergence rates
    of the proposed method with $\epsilon = 0.1$
    for solving the tests in \Cref{sec:squares}.
    $\Omega_r$ is obtained by rotating $(0,1)^2$
    around the origin by $\frac{\pi}{6}$;
    see \Cref{fig:rotatedSquare-solution}. 
    % The exact solutions on both $\Omega$ and $\Omega_r$
    % are the same. 
  }
    %Dirichlet condition on the domain boundaries. }
  \begin{tabular}{c|c|ccccccc}
  \hline
  $\Omega$                    &              & $h=\frac{1}{64}$ & rate & $h=\frac{1}{128}$ & rate & $h=\frac{1}{256}$ & rate & $h=\frac{1}{512}$ \\ \hline
  \multirow{3}{*}{$(0,1)^2$}  & $L^{\infty}$ &     3.68e-08 &       4.00 &   2.30e-09 &       4.00 &   1.44e-10 &       3.99 &   9.02e-12       \\
                              & $L^{1}$      &   1.13e-08 &       4.01 &   7.00e-10 &       4.01 &   4.35e-11 &       3.91 &   2.89e-12 \\
                              & $L^2$   &     1.50e-08 &       4.01 &   9.32e-10 &       4.00 &   5.81e-11 &       3.97 &   3.71e-12 \\ \hline
  \multirow{3}{*}{$\Omega_r$} & $L^{\infty}$ &    1.35e-07 &       3.93 &   8.85e-09 &       3.93 &   5.79e-10 &       3.95 &   3.75e-11 \\
                              & $L^{1}$      &    4.83e-08 &       4.03 &   2.95e-09 &       3.91 &   1.96e-10 &       3.91 &   1.31e-11 \\
                              & $L^2$        &     5.92e-08 &       3.99 &   3.72e-09 &       3.89 &   2.51e-10 &       3.91 &   1.67e-11 \\ \hline
\end{tabular}

%%% Local Variables:
%%% mode: latex
%%% TeX-master: "../ellipticFV2D"
%%% End:

  \label{tab:squares}
\end{table}%

This test consists of two cases. %different combinations of $(a,b,c)$ and $\Omega$.
First, 
we set $\Omega=(0,1)^2$ and $(a,b,c) = (1, 0, 2)$
 in (\ref{eq:ccEllipticEq}), 
 for which the exact solution is 
\begin{equation}
  \label{eq:squares}
  \forall (x_1, x_2)\in \overline{\Omega},\quad 
  u(x_1, x_2) = \sin(4 x_1) \cos(3 x_2),
\end{equation}
and the boundary condition is the Dirichlet condition from
(\ref{eq:squares}).
Due to the regularity of $\Omega$, 
 all cut cells in $\mathsf{C}_{\epsilon}^h(\Omega)$ are SFV cells, 
 the blocks $A_{21}$, $A_{12}$, and $A_{22}$ vanish, 
 and the linear system (\ref{eq:PoissonSplitDiscretization})
 reduces to that of the standard fourth-order FV discretization of
 (\ref{eq:ccEllipticEq}). 
Also, the block smoother in \Cref{def:blockSmoother}
 reduces to the weighted Jacobi. 
 % and the proposed cut-cell multigrid method is 
 % any to the standard geometric multigrid methods on rectangular grids.
 
In the second case, 
 the domain $\Omega_r$ is obtained by rotating the unit square
 around the origin by $\frac{\pi}{6}$; 
 see \Cref{fig:rotatedSquare-solution}.
% and then embedded inside a bigger square box $\Omega_R$.
For $\left(a, b, c\right) = \frac{1}{4}\left(5, -2\sqrt{3}, 7\right)$, 
 the exact solution $u: \overline{\Omega} \to \mathbb{R}$
 of (\ref{eq:ccEllipticEq})
 is obtained by rotating that in (\ref{eq:squares})
 by $\frac{\pi}{6}$. 
A Dirichlet condition is imposed to ensure
 that the only difference of these two cases
 is the regularity of the boundary.
The main goal of this setup is to examine
 how the cross-derivative term
 and the PLG discretizations affect the solution errors. 
% \clearpage
 
For the two equivalent systems, 
 numerical solutions with $h=\frac{1}{256}$ 
 are shown in Figure~\ref{fig:squares}, 
 with error norms and convergence rates 
 listed in Table~\ref{tab:squares}, 
 where the fourth-order accuracy %of the proposed method
 are clearly demonstrated. 
%again confirming the accuracy of our method.
%For test case \ref{testCase:rotated}, % and \ref{testCase:rotatedRoundCorners},
%observe particularly that the error norms are almost identical.
%This suggests that our method is insensitive to the regularity of the geometry
%and produces high-order accurate solutions even in the neighborhood of the sharp corners.
%For each solution of (\ref{eq:ccEllipticEq}), 
Each error norm on the irregular domain %$\Omega_r$
 is greater than its counterpart on the regular domain, %$\Omega$,
 due to the PLG discretization
 and the larger SFV stencil for the additional cross-derivative term.
However, 
 the ratio of the two error norms is bounded by 4.6
 and we consider the slightly lower accuracy as a reasonable cost 
 for PLG and the cross-derivative term.

%====================================================================
%====================================================================
\subsection{A square minus a flower}
\label{sec:squareMinusFlower}
%  accuracy comparison to a second-order EB method}
%Star-shaped domain. 

\begin{figure}
  \centering
  \subfigure[the numerical solution] {
    \includegraphics[width=0.305\linewidth, trim=0.6in 0.28in 0.6in 0.3in, clip]{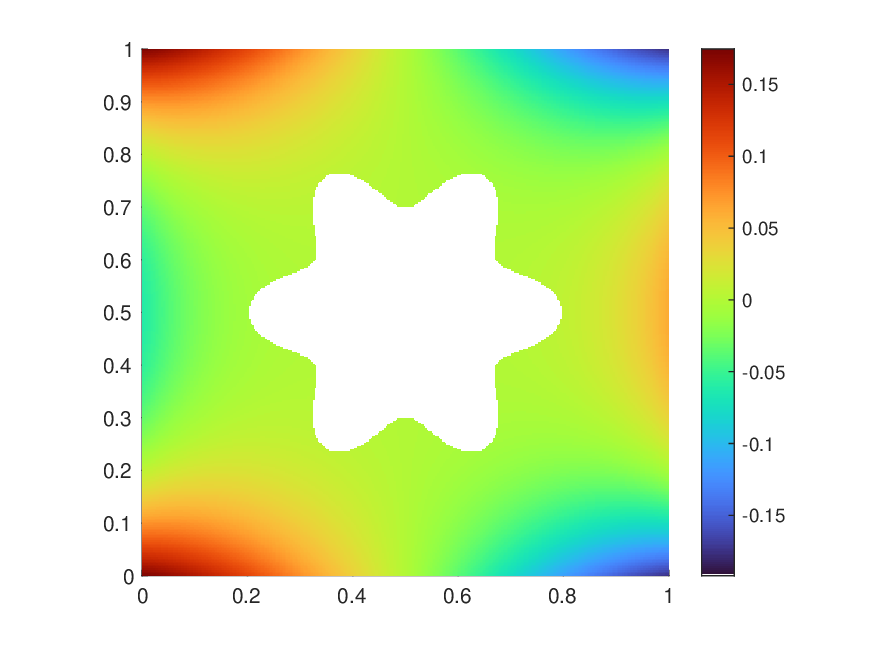}
    \label{fig:squareMinusFlower-solution}
  }
  \hfill
  \subfigure[the truncation error] {
    \includegraphics[width=0.305\linewidth, trim=0.6in 0.28in 0.6in 0.3in, clip]{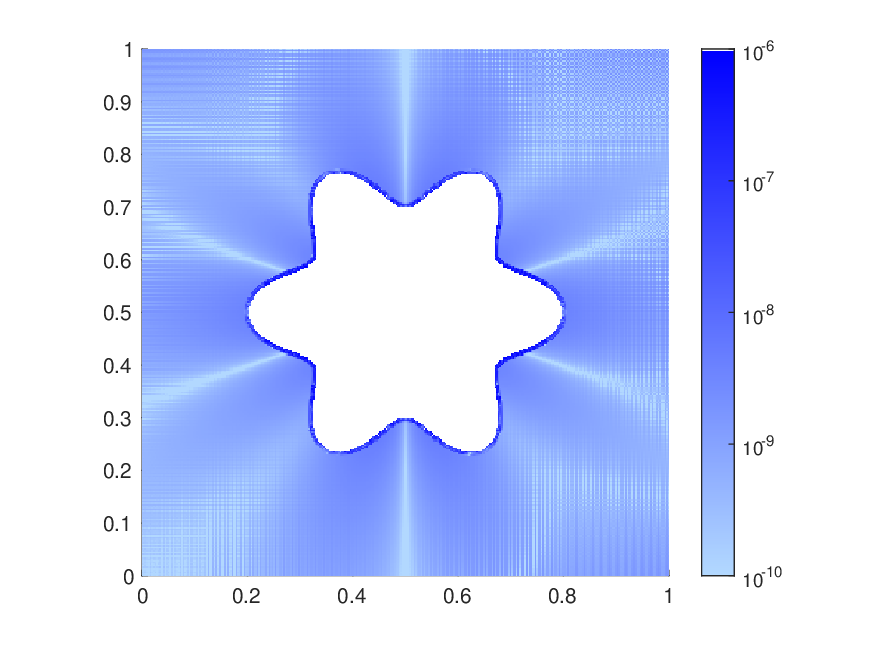}
    \label{fig:squareMinusFlower-truncErr}
  }
  \hfill
  \subfigure[the solution error] {
    \includegraphics[width=0.29\linewidth, trim=0.6in 0.28in 0.6in 0.3in, clip]{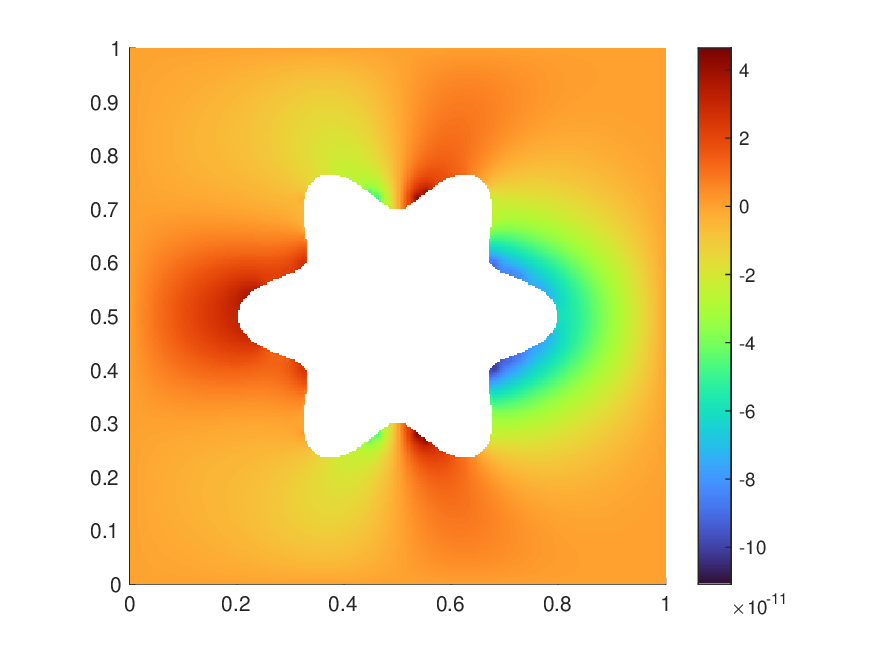}
    \label{fig:squareMinusFlower-solnErr}
  }
  \caption{
    Results of the proposed method
    in solving the problem in \Cref{sec:squareMinusFlower}
    % the Comparison between the solutions of two equivalent systems
    with $h=\frac{1}{80}$.
    Different from subplots (a,c),
    subplot (b) has a logarithmic scale for representing truncation errors.
  }
  \label{fig:squareMinusFlower}
\end{figure}

\begin{table}
  \centering
  \caption{Truncation and solution errors
    of the proposed cut-cell method with $\epsilon = 0.02$
    and a second-order EB method~\cite{Johansen1998}
    for solving the test problem in \Cref{sec:squareMinusFlower}.
    %  The lower bound of the
    % volume fraction $\epsilon = 0.02$.
  }            
  \begin{tabular}{cccccccc}
  \hline
  \multicolumn{8}{c}{Truncation errors of the EB method by Johansen and Colella~\cite{Johansen1998}} \\
  \hline
  & $h=\frac{1}{40}$  & rate
  & $h=\frac{1}{80}$  & rate
  & $h=\frac{1}{160}$ & rate
  & $h=\frac{1}{320}$                                                                     \\
  \hline
  $L^\infty$ & 1.66e-03        & 2.0  & 4.15e-04 & 2.0  & 1.04e-04 & 2.0  & 2.59e-05         \\
  \hline
  \multicolumn{8}{c}{Truncation errors of the proposed fourth-order cut-cell method}
  \\
  \hline
  & $h=\frac{1}{40}$  & rate
  & $h=\frac{1}{80}$  & rate
  & $h=\frac{1}{160}$ & rate
  & $h=\frac{1}{320}$                                                                     \\
  \hline
  $L^\infty$ &   6.53e-04 &       3.02 &   8.03e-05 &       2.49 &   1.42e-05 &       3.28 &   1.47e-06 \\
  $L^1$ &   1.48e-05 &       4.01 &   9.23e-07 &       4.02 &   5.68e-08 &       4.11 &   3.29e-09 \\
  $L^2$ &   5.27e-05 &       3.60 &   4.35e-06 &       3.52 &   3.79e-07 &       3.64 &   3.05e-08 \\
  \hline
  \hline
  \multicolumn{8}{c}{Solution errors of the EB method by Johansen and Colella~\cite{Johansen1998}} \\
  \hline
  & $h=\frac{1}{40}$  & rate
  & $h=\frac{1}{80}$  & rate
  & $h=\frac{1}{160}$ & rate
  & $h=\frac{1}{320}$                                                                   \\
  \hline
  $L^\infty$ & 4.78e-05        & 1.85 & 1.33e-05 & 1.98 & 3.37e-06 & 1.95 & 8.72e-07       \\
  \hline
  \multicolumn{8}{c}{Solution errors of the proposed fourth-order cut-cell method}                                        \\
  \hline
  & $h=\frac{1}{40}$  & rate
  & $h=\frac{1}{80}$  & rate
  & $h=\frac{1}{160}$ & rate
  & $h=\frac{1}{320}$                                                                   \\
  \hline
  $L^\infty$ &   5.42e-07 &       4.98 &   1.72e-08 &       3.73 &   1.29e-09 &       3.68 &   1.01e-10 \\
  $L^1$ &   7.72e-08 &       5.23 &   2.06e-09 &       3.86 &   1.42e-10 &       3.84 &   9.90e-12 \\
  $L^2$ &   1.31e-07 &       5.20 &   3.54e-09 &       3.89 &   2.39e-10 &       3.84 &   1.66e-11 \\
  \hline
\end{tabular}

%%% Local Variables:
%%% mode: latex
%%% TeX-master: "../ellipticFV2D"
%%% End:

  \label{tab:squareMinusFlower}
\end{table}%

In this test, we follow \cite[Problem 3]{Johansen1998}
to solve Poisson's equation
on an irregular domain $\Omega = R \cap \Omega_1$,
where $R = (-0.5,0.5)^2$,
$\Omega_1 = \left\{ \left( r, \theta \right)
  : r > 0.25 + 0.05 \cos 6 \theta \right\}$, 
and $\left( r, \theta \right)$ are the polar coordinates satisfying
$(x_1, x_2) = (r \cos \theta, r \sin \theta)$.
As shown in \Cref{fig:squareMinusFlower-solution},
 we set the exact solution as
 \begin{equation}
   \label{eq:squareMinusFlower}
  \forall (x_1,x_2)  \in\overline{\Omega},\quad
  u(x_1,x_2) = u(r, \theta) = r^4 \cos 3 \theta 
\end{equation}
 and impose Dirichlet and Neumann conditions
 on $\partial R$ and $\partial \Omega_1$, respectively. 

Due to the symmetric FV formulas in \Cref{sec:symmetricFD-cells}, 
 the truncation error $\tau_{\ibold}$
 for an SFV cell ${\cal C}_{\ibold}$
 is $O(h^4)$. 
For a PLG cell ${\cal C}_{\ibold}$, however, 
 it follows from (\ref{eq:goalOfApproximation})
 and the opening paragraph of \Cref{sec:discreteElliptic}
 that the truncation error for the $\ibold$th cut cell
 is given by
 $\tau_{\ibold}:={\beta}_{\min}^{\top} \overline{\mathbf{u}} - \brk{\calL u}_\bmi
 = O(h^3)$. 
This is confirmed both in \Cref{fig:squareMinusFlower-truncErr}
 and \Cref{tab:squareMinusFlower},
 where the convergence rates of truncation errors
 are asymptotically close to 3, 3.5, and 4 
 in the $L^\infty$, $L^2$, and $L^1$ norms, respectively.
In \Cref{fig:squareMinusFlower-solnErr}, 
 the non-uniformness of truncation errors
 causes solution errors to be oscillatory; however,
 the magnitude of solution errors
 is very small ($\sim 10^{-10}$)
 even for the large grid size $h=\frac{1}{80}$. 
More importantly,
 the large truncation errors near the boundary
 do not affect the fourth-order accuracy
 of solution errors; 
 this is well known for FD/FV methods
 and is confirmed in \Cref{tab:squareMinusFlower}.
 
% By (\ref{eq:goalOfApproximation})
%  and the opening paragraph of \ref{sec:Discretization-PLG-cells}, 
%  the truncation error for the $\ibold$th cut cell
%  is given by
%  ${\beta}_{\min}^{\top} \overline{\mathbf{u}} - \brk{\calL u}_\bmi$. 
% We list the solution errors
% are listed in Table~\ref{tab:truncationErrorsOfProblem1} and~\ref{tab:solutionErrorsOfProblem1}.
% which shows that the PLG approximation is third-order accurate
% on the PLG cells 
% and fourth-order accurate on SFV cells.
% The convergence rates of the solution errors
% are close to 4 in all norms as expected.

Truncation errors and solution errors of 
 the classical second-order EB method
 by Johansen and Colella~\cite{Johansen1998}
 are also listed in \Cref{tab:squareMinusFlower}. 
Clearly,
 our method is much more accurate: % than this second-order EB method.
%In particular,
 the $L^\infty$ solution error of our method
 on the coarest grid of $h=\frac{1}{40}$ 
 is smaller than that of the second-order EB method
 on the finest grid of $h=\frac{1}{320}$.

% \begin{table}
%   \centering
%   	\caption{Truncation errors for the test in \Cref{sec:petals},
% 		with Dirichlet condition on the rectilinear sides
% 		and Neumann condition on the irregular boundary.
% 		The result of  second-order EB
%                 method~\cite{Johansen1998} is also included.
%               }            
% \input{tab/TruncationErrorsOfProblem1.tex}
% \label{tab:truncationErrorsOfProblem1}
% \end{table}%
% \begin{table}
%   \centering
%   \caption{Solution errors for the test in \Cref{sec:petals},
% 		with Dirichlet condition on the rectilinear sides
% 		and Neumann condition on the irregular boundary.
% 		The result of  second-order EB
%                 method~\cite{Johansen1998} is also included.
% 	}
% \input{tab/SolutionErrorsOfProblem1.tex}
% \label{tab:solutionErrorsOfProblem1}
% \end{table}%

%====================================================================
%====================================================================
\subsection{A square minus four disks}
\label{sec:squareMinusFourDisks}

Consider a problem in \cite[\S 5.2]{Devendran2017}
of solving Poisson's equation on the domain
$\Omega = R \setminus \Omega_d$, 
where $R=(0, 1)^2$
and $\Omega_d$ is the closure of the union of four disks,
whose centers and radii $(c_1,c_2; r)$ are
$(0.5,0.5;0.2)$, $(0.5,0.735;0.1)$, $(0.2965,0.3825;0.1)$,
and $(0.7035,0.3825,0.1)$.
At each of the six kinks on $\Omega_d$, 
 a level-set function that implicitly represents
 $\partial\Omega_d$ would be $C^1$ discontinuous.
Following \cite[\S 5]{Devendran2017}, 
 we set the exact solution as 
\begin{equation}
  \label{eq:squareMinusFourDisks}
  \forall (x_1, x_2)\in \overline{\Omega},\quad
  u = \sin(\pi x_1)\sin(\pi x_2)
\end{equation}
and impose on $\partial R$
a Dirichlet condition from (\ref{eq:squareMinusFourDisks}).
% and either a Dirichlet or a Neumann condition
% on the irregular boundary $\partial \Omega_2$.

In \Cref{tab:squareMinusFourDisks},
 error norms and convergence rates of our cut-cell method
 and the fourth-order EB method in \cite{Devendran2017}
 are presented for solving this test
 with Dirichlet and Neumann conditions on $\partial\Omega_d$.
The fourth-order EB method performs poorly:
 its converge rates barely reach 2 and 1
 for Dirichlet and Neumann conditions, respectively; 
 as shown in \cite[Fig. 8]{Devendran2017}, 
 its largest solution errors concentrate around the six kinks. 
This is not surprising because, as discussed in \Cref{sec:intro}, 
 the error of the normal vector near a kink is $O(1)$
 and thus the integral of fluxes over faces of an irregular cut cell
 is calculated with an error of $O(h)$.
At the presence of kinks,
 this accuracy deterioration is unavoidable
 if the discretization of (\ref{eq:ccEllipticEq})
 is coupled with an implicit representation of the domain boundary.
Although the fourth-order accuracy can be recovered
 by smoothing the geometric description,
 this mollification process requires substantial extra care
 and its effectiveness depends largely on 
 mollification formulas and the nature of the governing equation
 \cite[\S 6]{Devendran2017}.
 
\begin{table}
  \centering
  \caption{Solution errors and convergence rates
    of the proposed cut-cell method with $\epsilon = 0.08$
    and a fourth-order EB method~\cite{Devendran2017}
    in solving the test problem in \Cref{sec:squareMinusFourDisks}. 
    % where a Dirichlet or Neumann condition from
    % (\ref{eq:squareMinusFourDisks})
    % is imposed on $\partial\Omega_d$.
  %  The lower bound of the volume fraction $\epsilon = 0.08$.
  }
  \setlength{\tabcolsep}{9pt}
\begin{tabular}{cccccccc}
  \hline
  & $h=\frac{1}{64}$ & rate & $h=\frac{1}{128}$ & rate & $h=\frac{1}{256}$ & rate & $h=\frac{1}{512}$    \\
  \hline
  \multicolumn{8}{c}{4th-order EB method
  without mollifying kinks; a Dirichlet condition on $\partial \Omega_d$}                                            \\
  \hline
  $L^{\infty}$ & 2.80e-03 & $-3.05$    & 2.32e-02 & 3.10     & 2.71e-03 & 2.06     & 6.50e-04 \\
  $L^{1}$ & 3.23e-05 & 2.82    & 4.56e-06 & 4.49     & 2.01e-07 & 1.76        & 5.94e-08 \\
  $L^{2}$ & 1.09e-04 & 0.56        & 7.40e-05 & 3.90        & 4.97e-06 & 1.13        & 2.26e-06            \\
  \hline
  \multicolumn{8}{c}{4th-order EB method with kink mollification; a Dirichlet condition on $\partial \Omega_d$} \\
  \hline
  $L^{\infty}$ & 2.64e-08 & 4.06    & 1.58e-09 & 4.03     & 9.65e-11 & 3.85     & 6.67e-12 \\
  $L^{1}$ & 1.08e-08 & 4.08     & 6.38e-10 & 4.03     & 3.88e-11 & 3.84     & 2.69e-12 \\
  $L^{2}$ & 1.34e-08 & 4.11     & 7.76e-10 & 4.05     & 4.68e-11 & 3.86     & 3.23e-12 \\
  \hline
  \multicolumn{8}{c}{the proposed cut-cell method; a Dirichlet condition on $\partial \Omega_d$}\\
  \hline
  $L^\infty$ &   5.44e-08 &       5.48 &   1.22e-09 &       3.88 &   8.28e-11 &       4.52 &   3.62e-12 \\
  $L^1$ &   9.50e-09 &       4.87 &   3.26e-10 &       4.41 &   1.54e-11 &       4.19 &   8.41e-13 \\
  $L^2$ &   1.17e-08 &       4.94 &   3.81e-10 &       4.41 &   1.79e-11 &       4.19 &   9.84e-13 \\
  \hline
  \hline
  & $h=\frac{1}{64}$ & rate & $h=\frac{1}{128}$ & rate & $h=\frac{1}{256}$ & rate & $h=\frac{1}{512}$    \\
  \hline
  \multicolumn{8}{c}{4th-order EB method without mollifying kinks; a Neumann condition on $\partial \Omega_d$}
  \\
  \hline
  $L^{\infty}$ & 3.10e-02 & 0.14  & 2.82e-02 & 0.52  & 1.97e-02 & 0.38  & 1.52e-02 \\
  $L^{1}$   & 1.42e-03 & $-0.58$ & 2.13e-03 & 0.62  & 1.38e-03 & 1.03   & 6.75e-04 \\
  $L^{2}$   & 2.72e-03 & $-0.24$ & 3.20e-03 & 0.67  & 2.02e-03 & 0.96   & 1.04e-03 \\
  \hline
  \multicolumn{8}{c}{4th-order EB method with kink mollification; a Neumann condition on $\partial \Omega_d$}
  \\
  \hline
  $L^{\infty}$ & 1.84e-07 & 4.02 & 1.13e-08 & 3.97 & 7.21e-10 & 3.74 & 5.39e-11 \\
  $L^{1}$ & 5.83e-08 & 3.94 & 3.78e-09 & 3.97 & 2.40e-10 & 3.86 & 1.65e-11 \\
  $L^{2}$ & 7.35e-08 & 3.96 & 4.73e-09 & 3.98 & 2.99e-10 & 3.87 & 2.04e-11 \\
  \hline
  \multicolumn{8}{c}{the proposed 4th-order cut-cell method; a Neumann condition on $\partial \Omega_d$}
  \\ \hline
  $L^\infty$ &   2.07e-07 &       4.29 &   1.06e-08 &       4.01 &   6.56e-10 &       3.79 &   4.76e-11 \\
  $L^1$ &   2.80e-08 &       5.02 &   8.65e-10 &       3.95 &   5.58e-11 &       3.79 &   4.04e-12 \\
  $L^2$ &   3.98e-08 &       4.63 &   1.61e-09 &       4.09 &   9.43e-11 &       3.75 &   7.03e-12 \\
  \hline
\end{tabular}

%%% Local Variables:
%%% mode: latex
%%% TeX-master: "../ellipticFV2D"
%%% End:

  \label{tab:squareMinusFourDisks}
\end{table}%

\begin{figure}
  \centering
  \subfigure[A Neumann condition on the disks $\partial \Omega_d$] {
    \includegraphics[width=0.45\linewidth, trim=0.6in 0.30in 0.59in 0.1in, clip]{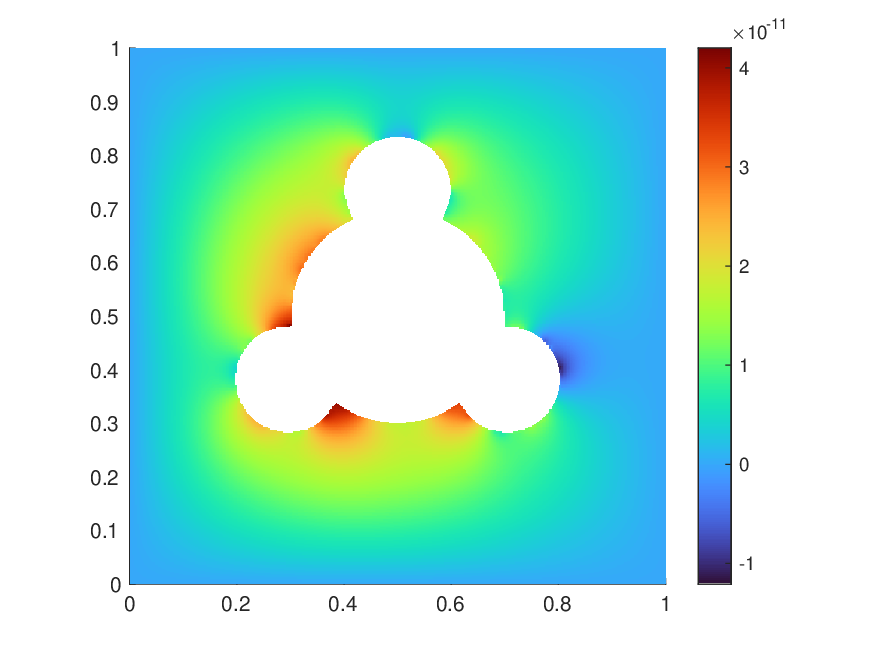}
    \label{fig:squareMinusFourDisks-solnErr-N}
  }
  \subfigure[A Dirichlet condition on the disks $\partial \Omega_d$] {
    \includegraphics[width=0.45\linewidth, trim=0.6in 0.28in 0.59in 0.1in, clip]{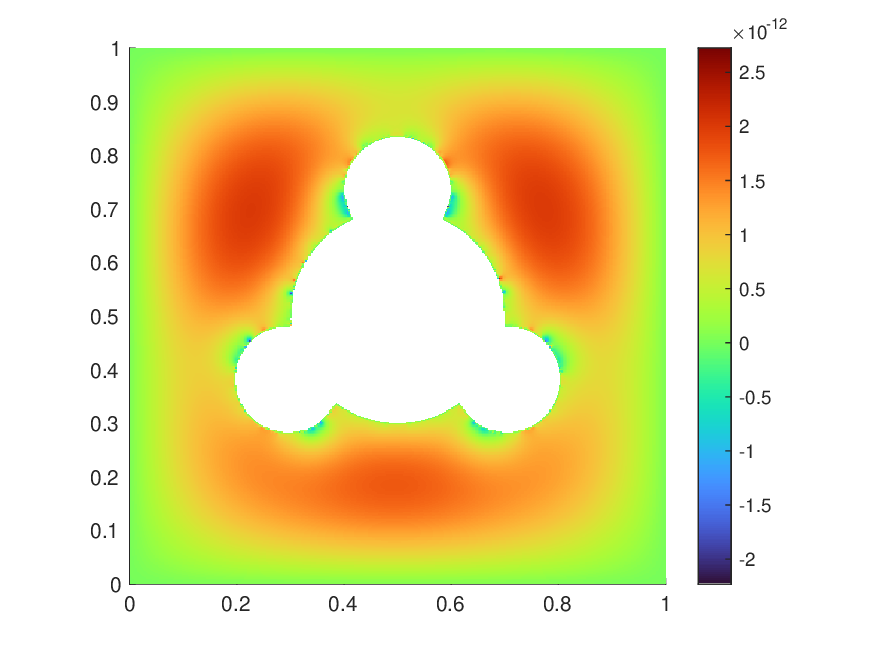}
    \label{fig:squareMinusFourDisks-solnErr-D}
  }
  \caption{Solution errors of the proposed cut-cell method
    in solving the problem in \Cref{sec:squareMinusFourDisks}
    with $h=\frac{1}{512}$.
    A Dirichlet condition is applied on the square boundary.
    All boundary conditions are derived from
    (\ref{eq:squareMinusFourDisks}). 
  }
  \label{fig:squareMinusFourDisks}
\end{figure}

In comparison,
 convergence rates of our cut-cell method
 are closed to 4 in both cases
 and its solution errors in all norms
 are smaller than those of the fourth-order EB method
 with mollifications.
As shown in \Cref{fig:squareMinusFourDisks}, 
 solution errors of our cut-cell method
 are not concentrated at the six kinks.
This is also unsurprising because 
 (i) the explicit representation of domain boundary
 by cubic splines admits a fourth- and higher-order
 approximation of the geometry of any irregular cut cell
 and (ii)
 the integrals of solutions over an irregular cut cell
 can be approximated to very high-order accuracy
 by Green's theorem and Gauss quadrature formulas.
In summary,
 the integral formulation of our cut-cell method
 (enabled by explicit representation of geometry)
 is advantageous over the differential formulation
 of previous EB methods.
 
% the RHS function $f$ in (\ref{eq:ccEllipticEq}). 
% \begin{table}
%   \centering
%   \caption{Solution errors for the test in \Cref{sec:ellipse},
% 		with Dirichlet condition on the rectilinear sides
% 		and Neumann condition on the irregular boundary.
% 		Comparisons with a fourth-order EB
%                 method~\cite{Devendran2017} are shown. } 
% \input{tab/ErrorsOfEllipseDN.tex}
% \label{tab:errorsOfEllipseDN}
% \end{table}%

%====================================================================
%====================================================================

%Rotated box; with or w/o rounded corners. Compare with unit box. 
%We investigate the effect of nonsmooth boundary in this problem.
% Test 4. Panda.
\subsection{A panda}
\label{sec:panda}

To showcase the capability of the proposed cut-cell method
 in handling complex topology and geometry,
 we numerically solve (\ref{eq:ccEllipticEq})
 on the domain of a panda shown in Figure
 \ref{fig:panda-solution},  
 which is adapted from that in \cite[Figure 10]{Zhang2020:YinSets}
 with a sufficent number of breakpoints. 
The same spline representation of the panda boundary 
 is used for all grid sizes. 
 % so that the representation error of the complex geometry
 % is dominated by the solution error of our cut-cell method. 
% to The test is made even more challenging by
% changing the domain $\Omega$  to
% a panda adapted from 
% see Figure \ref{fig:errorPlotOfPanda1}.
The complex topology and geometry of the panda
 pose significant challenges to 
 a numerical solver. 
%  Boolean algorithms in cutting cells with the irregular boundary
% $\partial \Omega$.

\begin{figure}
  \centering
  \subfigure[the numerical solution] {
    \includegraphics[width=0.43\linewidth, trim=0.68in 0.25in 0.7in 0.25in, clip]{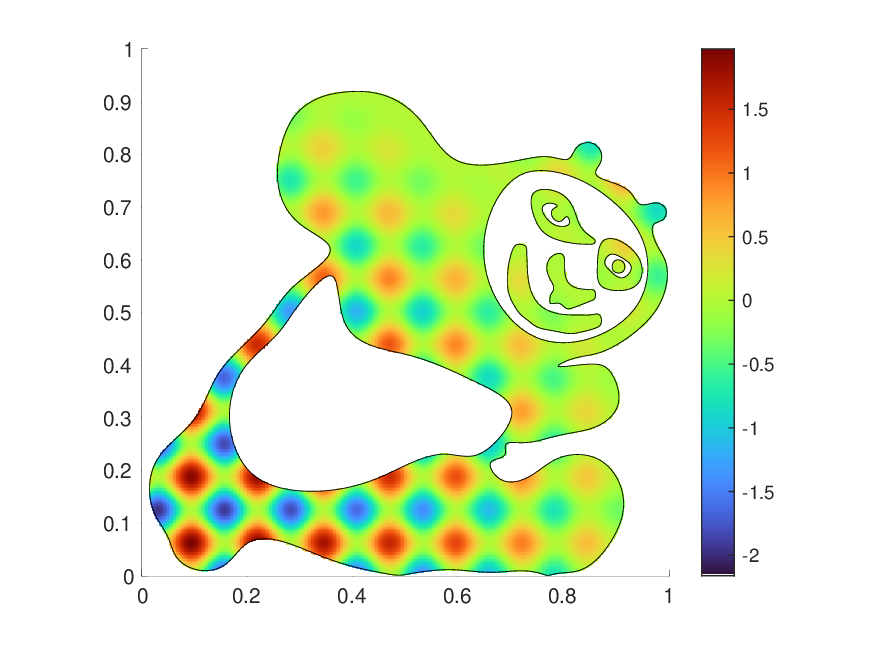}
    \label{fig:panda-solution}
  }
  \hfill
  \subfigure[the solution error] {
    \includegraphics[width=0.43\linewidth, trim=0.68in 0.25in 0.7in 0.25in, clip]{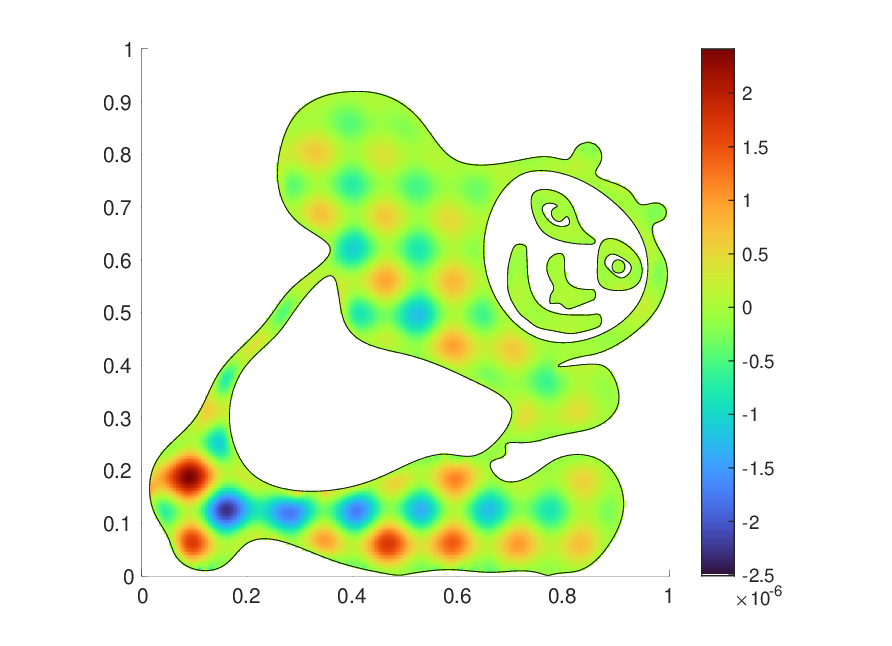}
    \label{fig:panda-solnErr}
  }
  \caption{Results of the proposed cut-cell method
    in solving %(\ref{eq:ccEllipticEq}) 
    the panda problem in \Cref{sec:panda}
    with $(a,b,c) = (1,1,2)$ and $h=\frac{1}{512}$.}
  \label{fig:panda}
\end{figure}

\begin{table}
  \centering
  \caption{Solution errors and convergence rates
    of the proposed cut-cell method with $\epsilon = 0.01$
    in solving the panda problem in \Cref{sec:panda},
    where we impose the Dirichlet condition (\ref{eq:panda})
    on the boundary.
%    The lower bound of the volume fraction $\epsilon = 0.01$.
  }
  \begin{tabular}{c|c|ccccccc}
  \hline
  $(a,b,c)$                    &              & $h=\frac{1}{256}$  & rate
  & $h=\frac{1}{512}$  & rate
  & $h=\frac{1}{1024}$ & rate
  & $h=\frac{1}{2048}$ \\ \hline
  \multirow{3}{*}{$(1,1,2)$}  & $L^{\infty}$ & 7.97e-05           & 5.05     & 2.41e-06
                       & 4.03               & 1.48e-07 & 4.02     & 9.14e-09 \\
                               & $L^{1}$      & 5.00e-06           & 3.97     & 3.18e-07
                       & 3.96               & 2.05e-08 & 3.97     & 1.31e-09 \\
                               & $L^2$   & 7.55e-06           & 3.96     & 4.87e-07
                       & 3.96               & 3.13e-08 & 3.98     & 1.99e-09 \\\hline
  \multirow{3}{*}{$(1,0,2)$} & $L^{\infty}$ &  5.47e-05           & 4.58     & 2.29e-06
                       & 4.01               & 1.42e-07 & 4.01     & 8.82e-09 \\
                               & $L^{1}$      & 4.75e-06           & 3.94     & 3.09e-07
                       & 3.95               & 2.00e-08 & 3.97     & 1.28e-09 \\
                               & $L^2$        &7.25e-06           & 3.93     & 4.76e-07
                       & 3.95               & 3.08e-08 & 3.97     & 1.96e-09 \\ \hline
\end{tabular}

%%% Local Variables:
%%% mode: latex
%%% TeX-master: "../ellipticFV2D"
%%% End:

  \label{tab:panda}
\end{table}%

The exact solution of this test is
%(\ref{eq:ccEllipticEq}) as 
\begin{equation}
  \label{eq:panda}
  \forall (x_1, x_2)\in \overline{\Omega},\quad
  u(x_1,x_2) = \left(x_1^2+x_2^2-1\right) [\sin(50x_1)+\cos(50x_2)]. 
\end{equation}
We impose the corresponding Dirichlet condition
 on the boundary of the panda
 and numerically solve (\ref{eq:ccEllipticEq}) 
 for $(a,b,c) = (1,1,2)$ and $(1,0,2)$. 
Results of our cut-cell method for the case
 with the cross-derivative term
 on the grid of $h=\frac{1}{512}$
 are plotted in Figure \ref{fig:panda}
 and the error norms % and convergence rates
 are listed in Table \ref{tab:panda},
 where the convergence rates are very close to 4.0 in all norms,
 demonstrating the fourth-order accuracy
 and the capability of our method in handling complex domains.
In addition, 
 quantitative results for $(a,b,c) = (1,1,2)$
 are very close to those for $(a,b,c) = (1,0,2)$, 
 indicating that the cross-derivative term is handled satisfactorily.
 
\begin{table}
  \centering
  \caption{
    CPU time in seconds for solving the panda test in
    \Cref{sec:panda} with $(a,b,c) = (1,1,2)$
    on an AMD Threadripper PRO 3975WX at 4.0Ghz. % with DDR4 2133MHz memory.
    For each of the four setup steps, 
    we also report %its CPU time in seconds 
    its percentage of the entire cost of setup
    in a pair of parentheses. 
    % The CPU time of the block smoothing 
    % is already included in those of the last row.
  }
  \begin{tabular}{|c|c|c|c|c|}
  \hline
  Stages & steps  & $h=\frac{1}{512}$ & $h=\frac{1}{1024}$
  & $h=\frac{1}{2048}$
  \\ \hline
  \multirow{5}{*}{\vspace{-22mm}Setup} & \makecell[t]{generate the set $\mathsf{C}_{\epsilon}^h(\Omega)$\\
  of cut cells by \Cref{alg:merging}}
  & \makecell[t]{0.081\\ (3.7\%)}  & \makecell[t]{0.303 \\ (6.7\%)}  & \makecell[t]{1.12 \\ (11.7\%)} \\ \cline{2-5} 
  & \makecell[t]{locate a poised lattice \\ for each PLG cell}
  & \makecell[t]{0.087\\ (3.9\%)}  & \makecell[t]{0.180\\ (4.0\%)}
  & \makecell[t]{0.407 \\ (4.2\%)}  \\ \cline{2-5} 
  & \makecell[t]{determine the linear system
  \\ in (\ref{eq:PoissonSplitDiscretization}) by steps in \Cref{sec:discretization}}
  & \makecell[t]{2.01\\ (90.8\%)} & \makecell[t]{3.99\\ (87.8\%)}
  & \makecell[t]{7.94 \\ (82.3\%)} \\ \cline{2-5} 
  & \makecell[t]{compute the block smoother \\ in
  \Cref{def:blockSmoother}}
  & \makecell[t]{0.036\\ (1.6\%)}
  & \makecell[t]{0.073 \\ (1.5\%)}  & \makecell[t]{0.176 \\ (1.8\%)}  \\
  \cline{2-5} 
  & \makecell[t]{the entire cost of setup, i.e., 
  \\ the sum of the above four steps}
  & \makecell[t]{2.21\\ (100\%)}  & \makecell[t]{4.55 \\ (100\%)}
  & \makecell[t]{9.64\\ (100\%)}
  \\\hline
  \multirow{2}{*}{Solve}      \
  & \makecell[t]{block smoothing only}  & \makecell[t]{0.521}          & \makecell[t]{2.38}           & \makecell[t]{9.84}           \\ \cline{2-5} 
  & \makecell[t]{the entire cost of FMG cycles}                 & \makecell[t]{1.08}          & \makecell[t]{3.63}           & \makecell[t]{14.3}           \\ \hline
\end{tabular}

%%% Local Variables:
%%% mode: latex
%%% TeX-master: "../ellipticFV2D"
%%% End:

  \label{tab:timeConsumption}
\end{table}

Computational costs of the main components
 of our cut-cell method
 are reported in \Cref{tab:timeConsumption}
 for the case of $(a,b,c) = (1,1,2)$.
%As discussed in \Cref{sec:doma-rep},
The generation of cut cells by \Cref{alg:merging}
 clearly has the $O(h^{-2})$ complexity. 
In contrast, 
 all other setup steps have the optimal $O(h^{-1})$ complexity,
 confirming the analysis in \Cref{sec:discretization}
 and \Cref{sec:order_L22,sec:smoother}. 
In particular,
 the complexity of determining the linear system (\ref{eq:PoissonSplitDiscretization})
 is only $O(h^{-1})$ 
 because the block $A_{11}$ is never assembled
 but applied ``on the fly'' inside the weighted Jacobi.
As indicated by the last row of \Cref{tab:timeConsumption},
 FMG cycles have the optimal complexity of $O(h^{-2})$, 
 which confirms our analysis in \Cref{sec:complexity}. 
 
%Despite of its $O(h^{-2})$ 
The cost of generating cut cells is very much dominated
 by that of determining the linear system
 (\ref{eq:PoissonSplitDiscretization}),
 which holds even on the finest grid.
Consequently,
 the cost of the entire initial setup displays
 a roughly linear growth as the grid size $h$ is reduced.
Being the most expensive component of V-cycles and FMG cycles, 
 block smoothing consumes more CPU time
 than the initial setup on the finest grid, 
 since the $O(h^{-2})$ growth of its cost 
 is higher than the linear growth.

% ====================================================================
%====================================================================
\subsection{FMG efficiency}
\label{sec:efficiency}

The panda is a good representative of complex domains
 while the rotated square in \Cref{fig:rotatedSquare-solution}
 that of the other extreme of irregular but simple domains.
For the rotated square in \Cref{fig:rotatedSquare-solution}
 with $h=\frac{1}{256}, \frac{1}{512}, \frac{1}{1024}$, 
 we record computational costs (not shown) of the main components. 
The cost of generating cut cells
 grows quadratically
 with respect to the reduction of $h$
 while those of all other steps in the initial setup grows linearly;
 the $O(h^{-2})$ complexity of FMG cycles are also confirmed.
In addition, the consumed CPU time in seconds for $h=\frac{1}{1024}$
 is 1.89, 2.62, 3.57, and 5.31
 for the determination of (\ref{eq:PoissonSplitDiscretization}),
 the entire initial setup, the block smoothing,
 and all FMG cycles, respectively. 
The cost ratio of block smoothing over all FMG cycles
is $\frac{3.57}{5.31}\approx 0.67$ for the rotated square,
 which is very close to that
 ($\frac{2.38}{3.63}\approx 0.66$) for the panda, cf. \Cref{tab:timeConsumption}.
Due to the simple geometry of the rotated square, 
 the cost ratio of the entire setup over all FMG cycles, 
 $\frac{2.62}{5.31}\approx 0.49$, 
 is much smaller than that ($\frac{4.55}{3.63}\approx 1.25$)
 for the panda test.
We sum up main conclusions of the above discussions as follows.
\begin{itemize}
\item The proposed cut-cell method has the optimal
  complexity of $O(h^{-2})$.
\item On a coarse grid,
  the initial setup might be more expensive than the FMG cycles.
  However, there exists a grid size $h^*$ such that, 
  for any $h<h^*$,
  the computational cost of the initial setup is less than 
  that of the FMG cycles.
\item Block smoothing consumes $\frac{2}{3}$ of the entire CPU time
  of FMG cycles and is asymptotically the most expensive component
  of the proposed method.
\end{itemize}
 
\begin{figure}
  \centering
  \subfigure[\Cref{sec:squares} with $h = \frac{1}{512}$]{
    \includegraphics[width=0.4\linewidth, trim=0.1in 0.16in 0.05in 0.08in, clip]{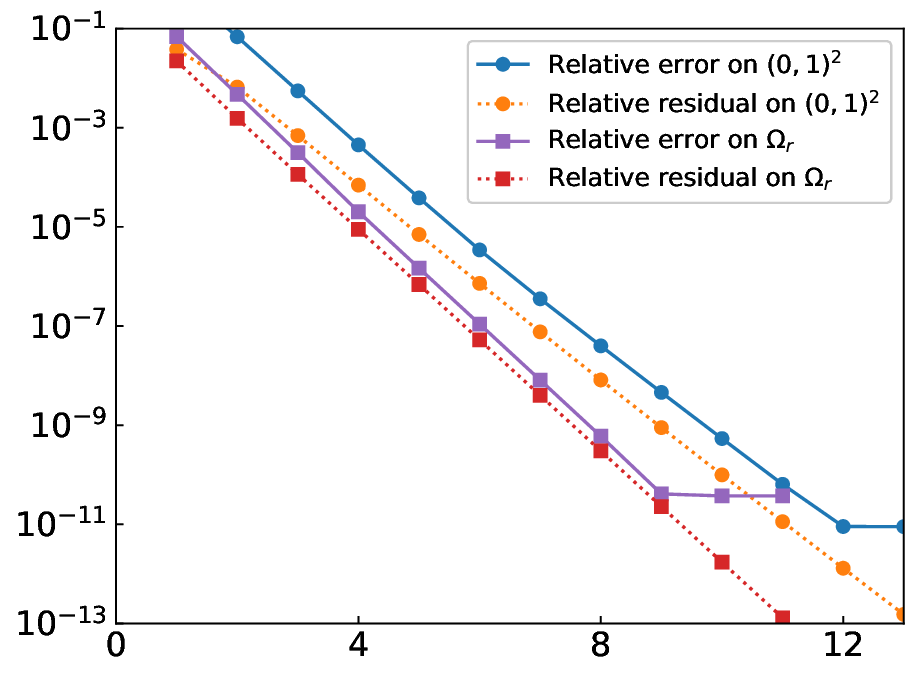}
    \label{fig:residualReduction-squares}
  }
  \hfill
  \subfigure[\Cref{sec:panda} with $h = \frac{1}{2048}$]{
    \includegraphics[width=0.4\linewidth, trim=0.1in 0.16in 0.05in 0.08in, clip]{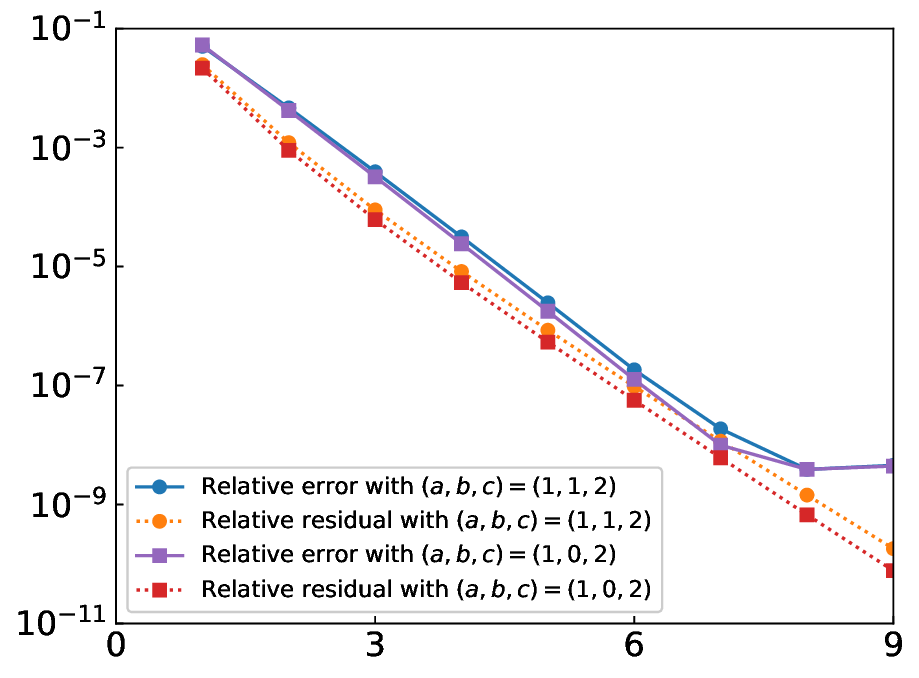}
    \label{fig:residualReduction-panda}
  }
  \caption{Performance of FMG cycles of our method
    with $\nu_1 = \nu_2 = 3$
    for tests on the unit square $(0,1)^2$, the rotated square $\Omega_r$,
    and the panda in \Cref{sec:squares,sec:panda}.
    The ordinates are the relative residuals/errors in $L^\infty$ norm,
    the abscissa is the iteration number of FMG cycles.
    The condition number of an FMG cycle is indicated
    by the smallest solution error that remains constant
    under more multigrid iterations.
  }
  \label{fig:residualReduction}
\end{figure}

\begin{table}
  \centering
  \caption{Averaged reduction rates of residuals in solving problems
    in \Cref{sec:squares,sec:squareMinusFlower,sec:squareMinusFourDisks,sec:panda}.}
  \begin{tabular}{|c|c|c|c|c|}
\hline
Tests  & cycles & \footnotesize $(\nu_1,\nu_2)=(2,1)$
  &\footnotesize $(\nu_1,\nu_2)=(2,2)$ & \footnotesize  $(\nu_1,\nu_2)=(3,3)$ \\ \hline
\multirow{2}{*}{the unit square $(0,1)^2$}                    & V     & 0.858                 & 0.410                 & 0.330                 \\ \cline{2-5} 
                                                & FMG   & 0.672                 & 0.246                 & 0.131                 \\ \hline
\multirow{2}{*}{the rotated square $\Omega_r$}                 & V     & 0.474                 & 0.266                 & 0.216                 \\ \cline{2-5} 
                                                & FMG   & 0.376                 & 0.162                 & 0.103                 \\ \hline
\multirow{2}{*}{$(0,1)^2$ minus a flower}          & V     & 0.384                 & 0.318                 & 0.221                 \\ \cline{2-5} 
                                                & FMG   & 0.296                 & 0.207                 & 0.108                 \\ \hline
  \multirow{2}{*}{\makecell[t]{$(0,1)^2$ minus four disks \\
  (a Dirichlet condition)}} & V     & 0.298                 & 0.236                 & 0.161                 \\ \cline{2-5} 
                                                & FMG   & 0.292                 & 0.120                 & 0.063                 \\ \hline
  \multirow{2}{*}{\makecell[t]{$(0,1)^2$ minus four disks \\
  (a Neumann condition)}} & V     & 0.373                 & 0.336                 & 0.279                 \\ \cline{2-5} 
                                                & FMG   & 0.347                 & 0.208                 & 0.135                 \\ \hline
  \multirow{2}{*}{\makecell[t]{panda with \\
  $(a,b,c)=(1,0,2)$}}   & V     & 0.652                 & 0.357                 & 0.178                 \\ \cline{2-5} 
                                                & FMG   & 0.548                 & 0.233                 & 0.119                 \\ \hline
  \multirow{2}{*}{\makecell[t]{panda with \\
  $(a,b,c)=(1,1,2)$}}   & V     & 0.661                 & 0.359                 & 0.189                 \\ \cline{2-5} 
                                                & FMG   & 0.636                 & 0.270                 & 0.126                 \\ \hline
\end{tabular}
%%% Local Variables:
%%% mode: latex
%%% TeX-master: "../ellipticFV2D"
%%% End:

  \label{tab:residualReduction}
\end{table}

In \Cref{fig:residualReduction},
 we show the performance of FMG cycles of our method
 in solving the tests in \Cref{sec:squares,sec:panda}.
For the unit and rotated squares,
 each FMG cycle respectively reduces the residual by a factor of
 7.6 and 9.7. 
As for the panda tests,
 each FMG cycle reduces the residual by a factor of
 8.4 and 7.9, respectively.
These reduction rates confirm the discussions on \Cref{tab:spectralRadius}
 in \Cref{sec:Vcycles}
 and are comparable to those
 of classical geometric multigrid methods. 
 
Finally in \Cref{tab:residualReduction},
 we list averaged reduction rates of our multigrid cycles
 for problems in
 \Cref{sec:squares,sec:squareMinusFlower,sec:squareMinusFourDisks,sec:panda}.
The improvement of FMG cycles over V-cycles
 are clearly demonstrated,
 verifying the claim in the ending sentence of \Cref{sec:Vcycles}. 
Altogether, \Cref{tab:timeConsumption,tab:residualReduction}
 imply that the choice of $(\nu_1,\nu_2)=(3,3)$
 is more cost-effective than those of $(\nu_1,\nu_2)=(2,1)$
 and $(\nu_1,\nu_2)=(2,2)$.
For complex domains
 and moderate grid sizes, % for the cut-cell method,
 it might be appropriate to choose even greater values of $(\nu_1,\nu_2)$.

%%% Local Variables:
%%% mode: latex
%%% TeX-master: "../ellipticFV2D"
%%% End:

% LocalWords:  SFV PLG FV Neumann quadratically FMG multigrid

\section{Conclusions}
\label{sec:conclusions}

We have proposed a fourth-order cut-cell multigrid method
 for solving constant-coefficient elliptic equations
 on 2D irregular domains
 with the optimal complexity of $O(h^{-2})$. 
Based on the Yin space,
 our method is able to handle arbitrarily complex topology and geometry.
Results of comprehensive numerical tests
 demonstrate the accuracy, efficiency, robustness,
 and generality of the new method.

Prospects for future research are as follows.
First,
 this work motivates theoretical investigations
 on the effectiveness of the proposed multigrid method.
Second, 
 we will augment the proposed cut-cell method
 to elliptic equations with variable coefficients.
Lastly,
 we will follow the GePUP formulations in \cite{Zhang2016:GePUP,li25:_gepup_es}
 to develop a fourth-order INSE solver
 on irregular domains, 
 for which the proposed method in this work
 can be reused to solve pressure Poisson equations 
 and Helmholtz-like equations.

%%% Local Variables:
%%% mode: latex
%%% TeX-master: "../ellipticFV2D"
%%% End:

% LocalWords:  multigrid INSE

{\bf Acknowledgments.}
We acknowledge helpful comments from Shaozhen Cao, Lei Pang, and Chenhao Ye, 
 graduate students at the school of mathematical sciences
 in Zhejiang University.

%\appendix
%\input{sec/appendix}

\bibliographystyle{siamplain}
\bibliography{bib/ellipticFV2D}

\end{document}